\newcommand{\f}[1]{\mathbf{#1}}
\newcommand{\R}{\mathbb R}
\newcommand{\N}{\mathbb N}
\newcommand{\Du}{\partial_{1}}
\newcommand{\Dv}{\partial_{2}}
\newcommand{\Dxi}{\partial_{\xi}}
\DeclareMathOperator{\myspan}{span}
\DeclareMathOperator{\supp}{supp}
\newcommand{\indOmega}{\mathcal{I}_{\Omega}}
\newcommand{\indSigma}{\mathcal{I}_{\Sigma}}
\newcommand{\indSigmaI}{\mathcal{I}_{\Sigma}^{\circ}}
\newcommand{\indSigmaB}{\mathcal{I}_{\Sigma}^{\Gamma}}
\newcommand{\indChi}{\mathcal{I}_{\chi}}
\newcommand{\indChiI}{\mathcal{I}_{\chi}^{\circ}}
\newcommand{\indChiB}{\mathcal{I}_{\chi}^{\Gamma}}
\newcommand{\JOmega}{\mathbf{J}_{\Omega}}
\newcommand{\JSigma}{\mathbf{J}_{\Sigma}}
\newcommand{\JChi}{\mathbf{J}_{\chi}}
\newcommand{\US}[2]{\mathbb{S}_{#1}^{#2}}
\newcommand{\UXI}[2]{\Xi_{#1}^{#2}}
\newcommand{\UN}[3]{N_{#3,#1}^{#2}}
\newcommand{\UM}[3]{M_{#3,#1}^{#2}}
\newcommand{\UV}{\mathbb{V}}
\newcommand{\UW}{\mathbb{A}}
\newcommand{\PhiB}{\Phi}
\newcommand{\phivec}{\boldsymbol{\phi}}
\newcommand{\phiP}[2]{\phi_{#2}^{\Omega^{(#1)}}}
\newcommand{\phiI}[2]{\phi_{#2}^{\Sigma^{(#1)}}}
\newcommand{\phiV}[2]{\phi_{#2}^{\f{x}^{(#1)}}}
\newcommand{\fSigma}{f}
\newcommand{\coefc}{c}
\newcommand{\coefd}{d}
\newcommand{\coefe}{e}
\newcommand{\coefet}{\widetilde{e}}
\newcommand{\coeff}{{\bf K}}
\newcommand{\trunc}{\mathop{\mathrm{trunc}}}
\newcommand{\Trunc}{\mathop{\mathrm{Trunc}}}
\newcommand{\GG}{G} 
\newcommand{\QQ}{\mathcal{Q}} 
\DeclareMathOperator{\dist}{dist}
\newtheorem{theorem}{Theorem}[section]
\newtheorem{lemma}[theorem]{Lemma}
\newtheorem{proposition}[theorem]{Proposition}
\newtheorem{example}{Example}
\newtheorem{remark}{Remark}
\def\MK{\color{black}}
\def\CB{\color{black}}
\def\RV{\color{black}}
\def\RVV{\color{black}}
\definecolor{pinegreen}{rgb}{0.0, 0.47, 0.44}
\definecolor{darkgreen}{rgb}{0.12, 0.35, 0.09}
\begin{document}


\title{\textbf{Adaptive isogeometric methods with $C^1$ (truncated) hierarchical splines on planar multi-patch domains}}

\author{{Cesare Bracco$^{1}$, Carlotta Giannelli$^{1}$, Mario Kapl$^{2}$, R. V\'azquez$^{3,4}$} \\
\footnotesize{$^1$ Dipartimento di Matematica e Informatica ``U. Dini'',
Universit\`{a} degli Studi di Firenze, Florence, Italy} \\
\footnotesize{$^2$ Department of Engineering $\&$ IT, Carinthia University of Applied Sciences, Villach, Austria} \\
\footnotesize{$^3$ Institute of Mathematics, \'{E}cole Polytechnique F\'{e}d\'{e}rale de Lausanne, Lausanne, Switzerland} \\
\footnotesize{$^4$ Istituto di Matematica Applicata e Tecnologie Informatiche `E.~Magenes' del CNR, Pavia, Italy}
}








\maketitle
\vspace{-0.8cm}
\noindent\rule{\linewidth}{0.4pt}

\begin{abstract}
Isogeometric analysis is a powerful paradigm which exploits the high smoothness of splines for the numerical solution of high order partial differential equations. However, the tensor-product structure of standard multivariate B-spline models is not well suited for the representation of complex geometries, and to maintain high continuity on general domains special constructions on multi-patch geometries must be used. In this paper we focus on adaptive isogeometric methods with hierarchical splines, and extend the construction of $C^1$ isogeometric spline spaces on multi-patch planar domains to the hierarchical setting. 
{\RVV We introduce a new condition for the definition of hierarchical splines, which replaces the hypothesis of local linear independence for the basis of each level by a weaker assumption.}
We also develop a refinement algorithm that guarantees that the assumption is fulfilled by $C^1$ splines on certain suitably graded hierarchical multi-patch mesh configurations, and prove that it has linear complexity. The performance of the adaptive method is tested by solving the Poisson and the biharmonic problems.
\end{abstract}

\textit{Keywords:} Isogeometric analysis; Adaptivity; Hierarchical splines; $C^1$ continuity; Multi-patch domains; Biharmonic problem

\textit{AMS Subject Classification:} {65D07; 65D17; 65N30; 65N50}

\section{Introduction}

Isogeometric Analysis (IgA) is a numerical method for the solution of partial differential equations (PDEs), introduced with the idea of bridging the gap between computer aided design and finite element analysis. The fundamental idea is the use of (rational) spline functions both for the representation of the geometry and for the discretization of the PDEs, allowing a simpler interaction between them. It was very soon realized that one of the main advantages of IgA is the high continuity of splines, which is particularly useful in the solution of high order PDEs such as the stream formulation of linear elasticity \cite{ABBLRS-stream}, the Kirchhoff-Love shell \cite{kiendl-bletzinger-linhard-09}, or the Cahn-Hilliard phase field model \cite{gomez2008isogeometric}, because they allow a straightforward discretization of their direct formulations, that require the basis functions to be $C^1$, or more precisely, $H^2$-conforming.

The high continuity of the basis functions is trivially obtained in a single-patch representation of the domain, 
but the design of complex geometry models requires the use of a globally unstructured representation as the one given by multi-patch domains. While $C^0$ continuity across patches is relatively easy to obtain, as long as the meshes are conforming on the interfaces, the construction of isogeometric $C^1$ spline functions on complex multi-patch domains is more challenging, and has been extensively studied in recent years. The existing methods can be classified into two groups depending on whether the $C^1$ continuity of the spline spaces is exact or just approximate. In case of approximately $C^1$ multi-patch spline spaces, possible examples are methods which enforce the $C^1$-smoothness across the interfaces weakly, e.g. by adding penalty terms to the weak form of the PDE as in \cite{ApBrWuBl15,Guo2015881} or by means of Lagrange multipliers as in \cite{ApBrWuBl15,BenvenutiPhD}, and methods which approximate the $C^1$ continuity directly on the basis functions as
in \cite{SaJu21,TaTo22,WeTa21,WeTa22}. 
In case of exactly $C^1$ multi-patch spline spaces, the methods can be distinguished depending on the employed parameterization of the multi-patch domain. Examples are the use of multi-patch parameterizations which are $C^1$ everywhere and therefore possessing singularities at extraordinary vertices 
such as in \cite{NgPe16,ToSpHu17} or in subdivision based techniques \cite{Peters2,RiAuFe16,ZhSaCi18}, non-singular multi-patch geometries which are $C^1$ everywhere except in the neighborhood of extraordinary vertices where a $G^1$-cap is used \cite{Pe15-2,KaPe17,KaPe18,NgKaPe15} as well as non-singular multi-patch parameterizations which are in general just $G^1$ at all interfaces (or in case of planar domains even just $C^0$). In the latter case,
examples are approaches based on arbitrary topology meshes with specific piecewise polynomial patches \cite{BlMoVi17,BlMoXu20,mourrain2015geometrically}, methods employing generic spline patches \cite{ChAnRa18,ChAnRa19} or techniques using analysis-suitable (AS) $G^1$ multi-patch parameterizations \cite{CoSaTa16}. For more exhaustive explanations about the construction of globally $C^1$ spline spaces
we refer to the two recent surveys in \cite{HuSaTaTo21} and \cite{KaSaTa19b}.

In this paper we will consider {\MK planar} analysis-suitable (AS) $G^1$ multi-patch parameterizations, which contain the subset of (mapped) piecewise bilinear domains \cite{BeMa14,KaBuBeJu16,KaSaTa21} as special case. Their use, however, is not particularly restrictive because generic {\MK planar} multi-patch parameterizations can be usually reparameterized to be AS $G^1$ \cite{KaSaTa17b}. The importance of analysis-suitable $G^1$ geometries is that they allow the construction of $C^1$ multi-patch spline spaces with optimal polynomial reproduction properties, and therefore isogeometric methods based on those spaces have optimal convergence, as numerically shown in \cite{KaSaTa19b,KaSaTa19a}.
In particular, our focus will be on the addition of local refinement capabilities to the $C^1$ spline space from \cite{KaSaTa19a}, since (1) it has 
{\MK an}
explicitly given local basis, (2) its dimension is independent of the parameterizations of the single patches, and (3) the number of basis functions associated to each extraordinary vertex is equal to six independently of the vertex valence. 




The main advantage of adaptive isogeometric methods is that they provide the possibility to locally refine the approximation space of the considered PDE, which allows in general to attain the same accuracy as global refinement with an important reduction in the degrees of freedom and the computational effort. Among all the possible spline spaces with local refinement capabilities, {\RV here we focus} on (truncated) hierarchical splines \cite{giannelli2012,giannelli2016}, because their multilevel structure simplifies their use with respect to other spline spaces, see \cite[Chapter~4]{reviewadaptiveiga} for a discussion on the use of adaptive spline spaces in isogeometric analysis. The theoretical background of adaptive methods with hierarchical B-splines in IgA was investigated in \cite{buffa2016c,buffa2017b,gantner2017}, where the optimal convergence rates for second order elliptic PDEs was proved. The potential benefits of adaptivity with hierarchical splines for the solution of fourth order PDEs were studied to simulate brittle fracture in \cite{hennig2016,hennig2018}, for Kirchhoff plates and Kirchhoff-Love shells in \cite{AnBuCo20,CoAnVaBu20}, and for the simulation of tumor growth in \cite{lorenzo2017}, always restricted to the case of single-patch domains.

The use of $C^1$ multi-patch spaces with local refinement was initiated with works on T-splines, as in \cite{ScSiEvLiBoHuSe13,WeZhLiHu17} and references therein, and subdivision surfaces \cite{WeZhHuSc15}, but the approximation properties of the spaces were not optimal around extraordinary points. In the last years, the construction of $C^1$ splines {\MK with a T-spline refinement} based on degenerate patches (also called D-patches), {\MK see \cite{CaWeToLiHuKiZh20} and \cite{ToSpHu17},}
improved the accuracy around extraordinary vertices with respect to previous works.
{\MK Recently, the concept of D-patches was used in \cite{wei2021thusplines} to generate $C^1$ hierarchical splines}, although the properties required for the hierarchical construction were only studied numerically. The construction based on the $C^1$ space of \cite{KaSaTa19a} was extended to hierarchical splines in \cite{BrGiKaVa20} for two-patch geometries, that is, in the absence of extraordinary points.

In this work we generalize the construction of $C^1$ hierarchical splines from the two-patch to the multi-patch setting. Since the presence of extraordinary points removes the local linear independence property, usually considered so far for the construction of hierarchical spline spaces, our first contribution is the development of a {\RVV weaker assumption for linear independence, which only assumes linear independence of the functions of each level restricted to the unrefined region of its level.}
We prove that this relaxed assumption implies linear independence of the (truncated) hierarchical basis. Second, we present 
the characterization of the $C^1$ space on one level, as well as a counterexample of the local linear independence of its basis and key results about {\RVV local} linear independence of certain basis subsets.
Third, relying on these properties we construct the new hierarchical $C^1$ spline space, with an explicit expression of the refinement mask necessary to apply truncation. We show that under a simple condition on the hierarchical mesh near the extraordinary point {\RVV the new weaker assumption is}
satisfied, and we develop a simple refinement algorithm to ensure that this condition is always satisfied. In practice, whenever an element adjacent to an extraordinary vertex is marked for refinement, other elements of the same patch (at most four) must be also marked. Fourth, we combine the new algorithm with admissible refinement algorithms from \cite{BrGiVa18} and \cite{buffa2016c} to limit the interaction between functions of different levels, and prove its linear complexity {\RV with an estimate depending on the vertex valences, but not on the parameterizations}.  
Finally, {\RV we combine our refinement algorithm with an a priori error estimator for adaptive refinement, and} we show numerical evidence for the optimal convergence properties of the proposed adaptive scheme for different model problems.

The remainder of the paper is organized as follows. Section~\ref{sec:hierarchical} presents the new abstract framework for the definition of (truncated) hierarchical splines. 
In Section~\ref{sec:multipatch} we introduce all the necessary material regarding analysis-suitable $G^1$ multi-patch parameterizations and the considered $C^1$ spline space, with  additional information detailed in~\ref{sec:appendix}. Section~\ref{sec:theoretical_onelevel} investigates the properties of the one level $C^1$ multi-patch space, while Section~\ref{sec:hierarchical-C1} introduces the construction of the $C^1$ hierarchical spline space. 
We further present in this section a refinement algorithm that ensures the property of linear independence and admissibility, and show the linear complexity of the algorithm. In Section~\ref{sec:numerical_tests}, we demonstrate the potential of our novel adaptive method for applications in IgA by solving the Poisson and the biharmonic problems over different AS-$G^1$ multi-patch geometries, and we give some conclusions in Section~\ref{sec:conclusions}.

\section{\RVV Hierarchical splines: a relaxed condition for linear independence} \label{sec:hierarchical}

This section introduces an abstract framework for the construction of {\RV a} hierarchical spline basis, which relaxes the assumption of \emph{local linear independence} for the underlying sequence of spline bases, as originally considered in \cite{giannelli2014} and also assumed in the construction of $C^1$ hierarchical functions on two-patch geometries \cite{BrGiKaVa20}.

\subsection{Hierarchical splines}
{\RVV We start with some general notation related to (local) linear independence. Let $\Omega \subset \mathbb{R}^n$ be a bounded domain}, and let $\Psi$ be a set of functions in $\Omega$. We say that $\Psi$ is linearly independent in $\widetilde{\Omega} \subseteq \Omega$ if the set of functions
\[
 \Psi\vert_{\widetilde{\Omega}} := \{\beta\vert_{\widetilde{\Omega}} \, : \, \beta \in \Psi, \, \beta \vert_{\widetilde{\Omega} } \neq 0   \} 
\]
is linearly independent. We further say that $\Psi$ is locally linearly independent if $\Psi\vert_{\widetilde{\Omega}}$ is linearly independent for any $\tilde \Omega \subseteq \Omega$.

{\RVV To define hierarchical splines, let} $\mathbb{U}^0\subset\mathbb{U}^1\subset \ldots \subset\mathbb{U}^{N-1}$ be a sequence of nested multivariate spline spaces defined on the closed {\MK and bounded} domain $D\subset \mathbb{R}^d$ and let ${\Omega}^0\supseteq{\Omega}^1{\RV \supseteq \ldots \supseteq}{\Omega}^{N-1}$ be a sequence of closed nested domains with $\Omega^0\subseteq D$. We assume that any space $\mathbb{U}^\ell$, for $\ell=0,\ldots,N-1$, is spanned by the basis $\Psi^\ell$. {\RVV We then define the {\RV spanning set of hierarchical splines} $\cal H$ as follows:
\begin{equation}\label{eq:hbasis}
{\cal H}:=\left\{\psi\in\Psi^\ell: {\supp}^0\psi\subseteq \Omega^\ell \wedge
{\supp}^0\psi\not\subseteq\Omega^{\ell+1}, 
\ell=0,\ldots,N-1
\right\},
\end{equation}
where $\supp^0 \psi := \supp\psi \cap \Omega^0$.

{\RV To be of practical use in local refinement, the support of the basis functions in $\Psi^\ell$ should be local, in the sense that increasing the level reduces the size of the support. Moreover, to} guarantee that the set ${\mathcal H}$ is a basis, in \cite{giannelli2014} it is required that the basis functions in $\Psi^\ell$ are locally linearly independent. In our framework, this is replaced by the following less restrictive condition:
\begin{itemize}
\item[(P1)] {\RVV for each level $\ell$, the set $\Psi^\ell|_{\Omega^\ell \setminus \Omega^{\ell+1}}$ is linearly independent.}
\end{itemize}
Obviously, local linear independence implies (P1), while the opposite is not true. The linear independence of the functions in $\mathcal{H}$ is proved in the following theorem.}

\begin{theorem} \label{thm:lin-indep-H}
{\RV Assuming} that property (P1) holds for the basis $\Psi^\ell$, for $\ell=0,\ldots,N-1$, the {\RV set of hierarchical splines ${\cal H}$} is linearly independent.
\end{theorem}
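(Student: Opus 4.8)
The plan is to show that any vanishing linear combination of the functions in $\mathcal{H}$ has all coefficients zero, proceeding by induction on the level from coarsest to finest. Writing $\mathcal{H}^\ell:=\mathcal{H}\cap\Psi^\ell$ for the level-$\ell$ functions of $\mathcal{H}$, suppose
\begin{equation*}
\sum_{\ell=0}^{N-1}\ \sum_{\psi\in\mathcal{H}^\ell} c_\psi\,\psi = 0\qquad\text{on }\Omega^0.
\end{equation*}
The guiding idea is that the ring $\Omega^\ell\setminus\Omega^{\ell+1}$ --- the part of $\Omega^\ell$ not refined to the next level --- sees, among the functions of $\mathcal{H}$ of level at least $\ell$, only those of level exactly $\ell$, so that (P1) can be applied one ring at a time. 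Throughout I use the convention $\Omega^N=\emptyset$, so that the top ring is $\Omega^{N-1}$ itself.

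First I would record two observations, both phrased via $\supp^0$. If $\psi\in\mathcal{H}^k$ with $k>\ell$, then $\supp^0\psi\subseteq\Omega^k\subseteq\Omega^{\ell+1}$, so $\psi$ vanishes on $\Omega^0\setminus\Omega^{\ell+1}$ and in particular on $\Omega^\ell\setminus\Omega^{\ell+1}$. Conversely, if $\psi\in\mathcal{H}^\ell$ then the defining conditions give $\supp^0\psi\subseteq\Omega^\ell$ together with $\supp^0\psi\not\subseteq\Omega^{\ell+1}$; combining these, $\psi$ does not vanish identically on $\Omega^\ell\setminus\Omega^{\ell+1}$, so $\psi|_{\Omega^\ell\setminus\Omega^{\ell+1}}$ is a genuine (nonzero) member of the family $\Psi^\ell|_{\Omega^\ell\setminus\Omega^{\ell+1}}$ to which (P1) refers.

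With these in hand, the induction runs cleanly. Restricting the identity to $\Omega^0\setminus\Omega^1$ kills every term of level $k\ge1$ by the first observation, leaving $\sum_{\psi\in\mathcal{H}^0} c_\psi\,\psi|_{\Omega^0\setminus\Omega^1}=0$; by the second observation these restrictions are nonzero and form a subset of $\Psi^0|_{\Omega^0\setminus\Omega^1}$, which is linearly independent by (P1), whence $c_\psi=0$ for all $\psi\in\mathcal{H}^0$. Assuming the coefficients of levels $0,\dots,\ell-1$ already vanish, restrict the remaining identity to $\Omega^\ell\setminus\Omega^{\ell+1}$: the terms of level $k>\ell$ drop out by the first observation, only level-$\ell$ terms survive, and the same subset-of-(P1) argument forces $c_\psi=0$ for every $\psi\in\mathcal{H}^\ell$. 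After $N$ steps all coefficients are zero. Note that the functions of $\Psi^\ell$ lying outside $\mathcal{H}$ never appear in these sums, so their possibly nonzero restrictions on the ring cause no trouble, since a subset of a linearly independent family is again linearly independent.

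I expect the only delicate point to be the second observation, namely relating the closed-support condition $\supp^0\psi\not\subseteq\Omega^{\ell+1}$ to the actual nonvanishing of the restriction $\psi|_{\Omega^\ell\setminus\Omega^{\ell+1}}$: one must ensure that a support point landing in this ring truly yields a nonzero restriction (rather than being discarded as a zero restriction in the definition of $\Psi^\ell|_{\Omega^\ell\setminus\Omega^{\ell+1}}$), which is where the precise interplay between $\supp\psi$, $\supp^0\psi=\supp\psi\cap\Omega^0$, and the (relative) topology of the nested domains must be used. Everything else is routine ring-by-ring bookkeeping once the decomposition along $\Omega^\ell\setminus\Omega^{\ell+1}$ is in place.
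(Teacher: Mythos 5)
Your proof is correct and follows essentially the same route as the paper's: a level-by-level recursion that restricts the vanishing linear combination to each ring $\Omega^\ell\setminus\Omega^{\ell+1}$, observes that only the level-$\ell$ functions of $\mathcal{H}$ survive there, and invokes (P1). The additional care you take in relating $\supp^0\psi\not\subseteq\Omega^{\ell+1}$ to the nonvanishing of the restriction on the ring is a point the paper passes over silently, but it does not change the argument.
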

\begin{proof}
{\RVV To prove linear independence we have to prove that in the following sum
\[
\sum_{\psi\in {\cal H}} c_{\psi} \psi = 
\sum_{\ell = 0}^{N-1} \sum_{\psi\in \Psi^\ell \cap \mathcal{H}} c_{\psi} \psi = 0
\]
all the coefficients are zero. 
In virtue of the definition of the {\RV hierarchical spline set} \eqref{eq:hbasis}, {\RV the functions} of $\Psi^0 \cap \mathcal{H}$ are the only nonzero functions in the expression above acting in $\Omega^0\setminus\Omega^1$. Property (P1) guarantees that they are linearly independent in that region, and consequently the corresponding coefficients $c_\psi$ must be zero for any $\psi\in\Psi^0 \cap \mathcal{H}$.

For any level $\ell=1,\ldots,N-1$ the same argument is exploited recursively: excluding the functions already considered in the sums of previous levels, {\RV the functions} in $\Psi^\ell \cap \mathcal{H}$ are the only
nonzero functions which act on $\Omega^\ell \setminus\Omega^{\ell+1}$, and by property (P1) the coefficient $c_\psi$ associated to any $\psi\in\Psi^\ell \cap \mathcal{H}$ must be zero.
}
\end{proof}

\subsection{Truncated hierarchical splines}

Replacing the property of local linear independence with property (P1) still allows us to define the truncated hierarchical basis, as originally introduced for hierarchical B-splines in \cite{giannelli2012} and later analyzed in a more general setting in \cite{giannelli2014}. 
In fact, in view of the nested nature of the sequence of spline spaces $(\mathbb{U}^\ell)_{\ell=0,\ldots,N-1}$, we can exploit a two-scale relation between bases of consecutive hierarchical levels to express any spline $s$ in the spline space of level $\ell$ ($\myspan\Psi^\ell$) as a linear combination of basis functions in $\Psi^{\ell+1}$, and to define the truncation of $s$ at level $\ell+1$ as
\begin{equation*}
{\trunc}^{\ell+1} (s) := \sum_{\psi\in \Psi^{\ell+1},\, \supp^0 \psi\not\subseteq\Omega^{\ell+1}} c_{\psi}^{\ell+1}(s) \psi, \quad \text{ with } \; s = \sum_{\psi\in \Psi^{\ell+1}} c_{\psi}^{\ell+1}(s) \psi.
\end{equation*} 
The truncated hierarchical {\RV splines are} then given by
\begin{equation}\label{eq:thbasis}
{\cal T}:=\left\{{\Trunc}^{\ell+1}(\psi): \psi \in \Psi^\ell \cap {\cal H}, 
\ell=0,\ldots,N-1
\right\},
\end{equation}
where
\begin{equation}\label{eq:succtrunc}
{\Trunc}^{\ell+1}(\psi) := {\trunc}^{N-1}\left(\ldots\left(
{\trunc}^{\ell+1}\left(\psi\right)\right)\ldots\right)
\end{equation}
defines the successive truncation of the {\RV function} $\psi$ of level $\ell$, for $\ell=0,\ldots,N-2$, and ${\Trunc}^{N}(\psi)=\psi$ for any $\psi\in\Psi^{N-1}$. Any hierarchical basis function $\psi\in {\cal H}$ generates a truncated {\RV spline} $\tau\in {\cal T}$ according to \eqref{eq:succtrunc} and it is called the mother function of the child function $\tau$, and (as in the setting of \cite{giannelli2014}) for the two functions it holds that $\tau_{\vert \Omega^{\ell}\setminus\Omega^{\ell+1}} = \psi_{\vert \Omega^{\ell}\setminus\Omega^{\ell+1}},$ that is used in the proof of the following theorem.

\begin{theorem}\label{thm:lin-indep-T}
{\RV Assuming} that property (P1) {\RV holds} for the basis $\Psi^\ell$, for $\ell=0,\ldots,N-1$, the {\RV set of truncated hierarchical splines} ${\cal T}$ is linearly independent.
\end{theorem}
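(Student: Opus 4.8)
The plan is to replay, essentially verbatim, the recursive peeling argument used for Theorem~\ref{thm:lin-indep-H}, now on the truncated functions, using the mother--child identity $\tau_{\vert \Omega^{\ell}\setminus\Omega^{\ell+1}} = \psi_{\vert \Omega^{\ell}\setminus\Omega^{\ell+1}}$ recorded just above the statement. Concretely, I would start from a vanishing combination
\[
\sum_{\tau \in {\cal T}} c_\tau \tau = \sum_{\ell=0}^{N-1} \sum_{\psi \in \Psi^\ell \cap {\cal H}} c_\psi\, {\Trunc}^{\ell+1}(\psi) = 0,
\]
and show that every $c_\psi$ vanishes by sweeping the domain through the strips $\Omega^\ell \setminus \Omega^{\ell+1}$ in order of increasing $\ell$.

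The first key step is a support estimate: for a mother $\psi \in \Psi^{\ell'} \cap {\cal H}$ the successive truncation ${\Trunc}^{\ell'+1}(\psi)$ is obtained from the two-scale relations by \emph{discarding} finer contributions, so its support never exceeds $\supp \psi$, and hence $\supp^0 {\Trunc}^{\ell'+1}(\psi) \subseteq \supp^0 \psi \subseteq \Omega^{\ell'}$. In particular, on the strip $\Omega^\ell \setminus \Omega^{\ell+1}$ every truncated function coming from a strictly finer level $\ell' > \ell$ vanishes, because its $\supp^0$ is contained in $\Omega^{\ell'} \subseteq \Omega^{\ell+1}$.

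With this in hand the induction runs exactly as before. On $\Omega^0 \setminus \Omega^1$ only the level-$0$ truncated functions survive, and the mother--child identity lets me replace each ${\Trunc}^1(\psi)$ by $\psi$ there; property (P1) for $\Psi^0$ then forces $c_\psi = 0$ for every level-$0$ mother. For a generic level $\ell$ I would assume all coefficients of levels $< \ell$ already known to be zero, drop those terms, observe that finer-level truncated functions vanish on $\Omega^\ell \setminus \Omega^{\ell+1}$ by the support estimate, rewrite the remaining level-$\ell$ truncated functions as their mothers via the identity, and invoke (P1) for $\Psi^\ell$ to conclude $c_\psi = 0$ for all level-$\ell$ mothers. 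Since each element of ${\cal T}$ is generated by a unique mother in ${\cal H}$, the vanishing of all coefficients $c_\psi$ yields the linear independence of ${\cal T}$ (and, incidentally, shows that distinct mothers produce distinct truncations).

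The only genuinely new ingredient compared with Theorem~\ref{thm:lin-indep-H} --- and the step I expect to require the most care --- is the support estimate $\supp {\Trunc}^{\ell'+1}(\psi) \subseteq \supp \psi$ together with the strip identity: one must check that the successive truncation, which modifies $\psi$ across several levels, neither enlarges its support nor alters its values on $\Omega^\ell \setminus \Omega^{\ell+1}$. Both facts are standard consequences of the nestedness of the spaces $\mathbb{U}^\ell$ and the locality of the two-scale relations, and the strip identity is already asserted in the text preceding the theorem; once they are granted, (P1) is applied strip by strip in precisely the same way as in the proof of Theorem~\ref{thm:lin-indep-H}, and no assumption beyond (P1) is needed.
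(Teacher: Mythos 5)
Your proof is correct and follows essentially the same route as the paper, which simply invokes the mother--child identity on each strip $\Omega^{\ell}\setminus\Omega^{\ell+1}$ and repeats the recursive peeling of Theorem~\ref{thm:lin-indep-H} using (P1). The only remark is that your intermediate claim $\supp {\Trunc}^{\ell'+1}(\psi) \subseteq \supp \psi$ is stronger than needed and not automatic here (it would require local linear independence, or at least locality, of the finer-level bases, which is exactly what the paper drops); the weaker fact that each truncation alters $\psi$ only inside $\Omega^{\ell'+1}\subseteq\Omega^{\ell'}$, so that $\supp^0 {\Trunc}^{\ell'+1}(\psi) \subseteq \supp^0\psi \cup \Omega^{\ell'+1} \subseteq \Omega^{\ell'}$, already gives the vanishing of finer-level truncated functions on the coarser strips, and the rest of your argument goes through unchanged.
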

\begin{proof}
Since a truncated function of level $\ell$ coincides with its mother function in $\Omega^\ell \setminus \Omega^{\ell+1}$, the proof is completely analogous to the one of \cite[Proposition~9]{giannelli2014}, simply replacing the use of local linear independence by property (P1) as it was done in the proof of Theorem~\ref{thm:lin-indep-H}.
\end{proof}

In addition to the linear independence of the hierarchical {\RV splines} $\mathcal{H}$ and the truncated hierarchical {\RV splines} $\mathcal{T}$, the following properties also hold as in the $C^1$ construction for two-patch domains \cite[Proposition~1]{BrGiKaVa20} and the abstract setting in \cite[Proposition~9]{giannelli2014} for the truncated basis, see those references for details. First, the intermediate spline spaces {\RV obtained when considering the construction \eqref{eq:hbasis} up to intermediate levels,} are nested. Second, given a hierarchy of subdomains defined at each level as an enlargement of $(\Omega^\ell)_{\ell=0,\ldots,N-1}$, the original hierarchical spline space is a subspace of the hierarchical spline space defined over the considered enlarged subdomains. And third, the hierarchical spline space generated by the truncated basis in \eqref{eq:thbasis} coincides with the span of the hierarchical basis introduced in \eqref{eq:hbasis}. 

{\RV In our abstract framework, we have removed the non-negativity and partition of unity properties from \cite{giannelli2014}, because the $C^1$ basis functions that we will use in the following do not satisfy them. However, if the two properties are satisfied by the basis functions of each level, they also hold for the truncated basis.}


\section{$C^1$ splines on the {\MK planar} multi-patch setting} \label{sec:multipatch} 
In this section we describe the geometric configuration and the $C^1$ spline spaces for one single level, following the notation in \cite{BrGiKaVa20,KaSaTa19a,KaSaTa19b} with small changes. We start with a description of the geometry, and then we introduce the $C^1$ spline space.

\subsection{The geometric {\MK planar} multi-patch setting} \label{sec:multipatch-geometry}
 Let us consider an open domain~$\Omega \subset \R^2$, 
which is built up of quadrilateral patches~$\Omega^{(i)}$, $i \in \indOmega$, with $\Omega^{(i)} \cap \Omega^{(j)}= \emptyset$ for $i \neq j$, 
inner edges~$\Sigma^{(i)}$, $i \in \indSigmaI$, and inner vertices~$\f{x}^{(i)}$, $i \in \indChiI$, and whose boundary~$\Gamma = \partial \Omega$ is the union of boundary edges~$\Sigma^{(i)}$, $i \in \indSigmaB$, and boundary vertices~$\f{x}^{(i)}$, $i \in \indChiB$, i.e. 
\[
 \Omega = \Bigg( \bigcup_{i \in \indOmega}\Omega^{(i)} \Bigg) \cup \Bigg( \bigcup_{i \in \indSigmaI} \Sigma^{(i)} \Bigg)
 \cup \Bigg( \bigcup_{i \in \indChiI} \f{x}^{(i)} \Bigg), \quad \Gamma = \Bigg( \bigcup_{i \in \indSigmaB} \Sigma^{(i)} \Bigg) \cup \Bigg( \bigcup_{i \in \indChiB} \f{x}^{(i)} \Bigg).
\]

We assume that no hanging nodes exist, and denote by $\indSigma$ and $\indChi$ the 
{\MK unions} of indices for inner and boundary edges and vertices, respectively, that is, 
$\indSigma=\indSigmaI {\MK \cup} \indSigmaB$ and 
$\indChi=\indChiI {\MK \cup } \indChiB$.

\subsubsection{Univariate spaces and basis functions} \label{sec:univariate_functions}
Let $\US{p}{r}$ be the univariate spline space of degree~$p \geq 3$ and regularity~$1 \leq r \leq p-2$ in $[0,1]$ with respect to the uniform open knot vector
\begin{equation*}
\UXI{p}{r} = (\underbrace{0,\ldots,0}_{(p+1)-\mbox{\scriptsize times}},
\underbrace{\textstyle \frac{1}{k+1}\ldots ,\frac{1}{k+1}}_{(p-r) - \mbox{\scriptsize times}}, 
\underbrace{\textstyle \frac{2}{k+1},\ldots ,\frac{2}{k+1}}_{(p-r) - \mbox{\scriptsize times}},\ldots, 
\underbrace{\textstyle \frac{k}{k+1},\ldots ,\frac{k}{k+1}}_{(p-r) - \mbox{\scriptsize times}},
\underbrace{1,\ldots,1}_{(p+1)-\mbox{\scriptsize times}}),
\end{equation*}
with $k \in \N_{0}$ and 
$k \geq \max(0,\frac{5-p}{p-r-1})$, 
and let $\UN{p}{r}{j}$, $j \in \{ 0,\ldots,n-1 \}$ with $n=p+1+k(p-r)$, be the associated B-splines. We will also need the 
subspaces~$\US{p}{r+1}$ and $\US{p-1}{r}$ defined from the same internal breakpoints~$\frac{j}{k+1}$, $j=0,1,\ldots,k$, and will use for their B-splines the 
analogous notation $\UN{p}{r+1}{j}$, $j \in \{ 0,\ldots,n_{0}-1 \}$, and $\UN{p-1}{r}{j}$, $j \in \{ 0,\ldots,n_{1}-1 \}$, respectively, 
where $n_{0}=p+1+k(p-r-1)$ and $n_1=p+k(p-r-1)$. 

Moreover, we will use the modified basis functions~$\UM{p}{r}{j}$, for $j=0,1$, $\UM{p}{r+1}{j}$, for $j=0,1,2$, and 
$\UM{p-1}{r}{j}$, for $j=0,1$, 
which fulfill
\begin{equation*}
\begin{array}{c}
\Dxi^i \UM{p}{r}{j}(0) = {\RV \delta_{ij}}, \quad \Dxi^i \UM{p-1}{r}{j}(0) = {\RV \delta_{ij}}, \text{ for } i,j=0,1,  \\
\Dxi^i \UM{p}{r+1}{j}(0) = {\RV \delta_{ij}}, \text{ for } i,j=0,1,2,
\end{array}
\end{equation*}
where ${\RV \delta_{ij}}$ is the Kronecker delta, see Appendix~\ref{sec:modified_basis} for their definitions.

\subsubsection{Parameterizations in standard configuration} \label{subsubsec:standard_paramaterization}

Each quadrilateral patch~$\Omega^{(i)}$, $i \in \indOmega$, is given as the open image of a bijective and regular geometry mapping
\[
\f{F}^{(i)}: [0,1]^2 \rightarrow \overline{\Omega^{(i)}},
\]
with $\f{F}^{(i)} \in (\US{p}{r} \otimes \US{p}{r})^2$. The resulting multi-patch parameterization (also called multi-patch geometry) of $\Omega$, which consists 
of the single spline parameterizations~$\f{F}^{(i)}$, $i \in \indOmega$, will be denoted by $\f{F}$. 

Considering a particular edge~$\Sigma^{(i)}$, $i \in \indSigma$, or vertex~$\f{x}^{(i)}$, $i \in \indChi$, we will assume throughout the paper that the geometry 
mappings~$\f{F}^{(i_m)}$ of the corresponding patches~$\Omega^{(i_m)}$ in the vicinity of the edge or vertex are given in standard 
form~(see \cite{KaSaTa19a,KaSaTa19b}). {In case of an edge~$\Sigma^{(i)}$, $i \in \indSigma$, we distinguish between an inner and a boundary edge.} 
For any inner edge~$\Sigma^{(i)}$, $i \in \indSigmaI$, we assume that the two patches $\Omega^{(i_0)}$ and 
$\Omega^{(i_1)}$, $i_0, i_1 \in \indOmega$, with $\Sigma^{(i)} \subset \overline{\Omega^{(i_0)}} \cup \overline{\Omega^{(i_1)}}$, are parameterized in such 
a way that \begin{equation*} 
 \f{F}^{(i_0)}(0,\xi) = \f{F}^{(i_1)}(\xi,0) , \mbox{ }\xi \in (0,1),
\end{equation*}
see Fig.~\ref{fig:standard_form_edge}~(left). Similarly, for any boundary edge~$\Sigma^{(i)}$, $i \in \indSigmaB$, the geometry mapping~$\f{F}^{(i_0)}$ of the 
associated patch~$\Omega^{(i_0)}$, $i_0 \in \indOmega$, with $\Sigma^{(i)} \subset \overline{\Omega^{(i_0)}}$, fulfills
\begin{equation*} 
\Sigma^{(i)} = \f{F}^{(i_0)}(0,(0,1)),
\end{equation*}
see Fig.~\ref{fig:standard_form_edge} (right).
\begin{figure}
\begin{center}
\begin{tabular}{cc}
\resizebox{0.52\textwidth}{!}{
 \begin{tikzpicture}
  \coordinate(A) at (0,0); \coordinate(B) at (0.5,3); \coordinate(C) at (3,-0.5); \coordinate(D) at (3.5,3.5);  \coordinate(E) at (-3,-0.7); 
  \coordinate(F) at (-3.3,3.6);
  \draw (A) .. controls (0,1.5) .. (B);
  \draw (A) .. controls (1.5,0.2) .. (C);
  \draw (B) .. controls (1.5,3) .. (D);
  \draw (C) .. controls (2.8,1.5) .. (D);
  \draw (A) .. controls (-1.5,0) .. (E);
  \draw (B) -- (F);
  \draw (E) .. controls (-2.8,2) .. (F);
  \node at (-1.5,1.5) {$\Omega^{(i_1)}$};
  \node at (1.8,1.5) { $\Omega^{(i_0)}$};
  \node at (-0.3,1.8) { $\Sigma^{(i)}$};
  \draw[->] (0.2,0.17) -- (0.2,1.2);
  \draw[->] (0.2,0.17) -- (1.4,0.3);
  \draw[->] (-0.2,0.17) -- (-0.2,1.2);
  \draw[->] (-0.2,0.17) -- (-1.4,0.15);
  \node at (-0.9,0.4) {\scriptsize $\xi_2$};
  \node at (-0.45,0.7) {\scriptsize $\xi_1$};
  \node at (0.9,0.5) {\scriptsize $\xi_1$};
  \node at (0.45,0.7) {\scriptsize $\xi_2$};
\end{tikzpicture}
 }
 & 
 \resizebox{0.35\textwidth}{!}{
 \begin{tikzpicture}
  \coordinate(A) at (0,0); \coordinate(B) at (-0.5,3); \coordinate(C) at (3,-0.5); \coordinate(D) at (3.5,3.0);
  \draw (A) .. controls (-1,1.5) .. (B);
  \draw (A) .. controls (1.5,0.2) .. (C);
  \draw (B) .. controls (1.5,2.6) .. (D);
  \draw (C) .. controls (2.8,1.5) .. (D);
  \node at (1.8,1.5) { $\Omega^{(i_0)}$};
  \node at (-0.2,1.8) { $\Sigma^{(i)}$};
  \draw[->] (0.1,0.2) -- (-0.45,1.1);
  \draw[->] (0.1,0.2) -- (1.2,0.3);
  \node at (0.8,0.5) {\scriptsize $\xi_1$};
  \node at (0.0,0.8) {\scriptsize $\xi_2$};
\end{tikzpicture}
 }
\end{tabular}
\end{center}
\caption{Representation in standard form with respect to an edge~$\Sigma^{(i)}$, $i \in \indSigma$. Left: Two neighboring patches~$\Omega^{(i_0)}$ and $\Omega^{(i_1)}$ with common inner edge~$\Sigma^{(i)}$, $i \in \indSigmaI$. Right: The patch~$\Omega^{(i_0)}$ with boundary edge~$\Sigma^{(i)}$, $i \in \indSigmaB$.}
 \label{fig:standard_form_edge}
\end{figure}
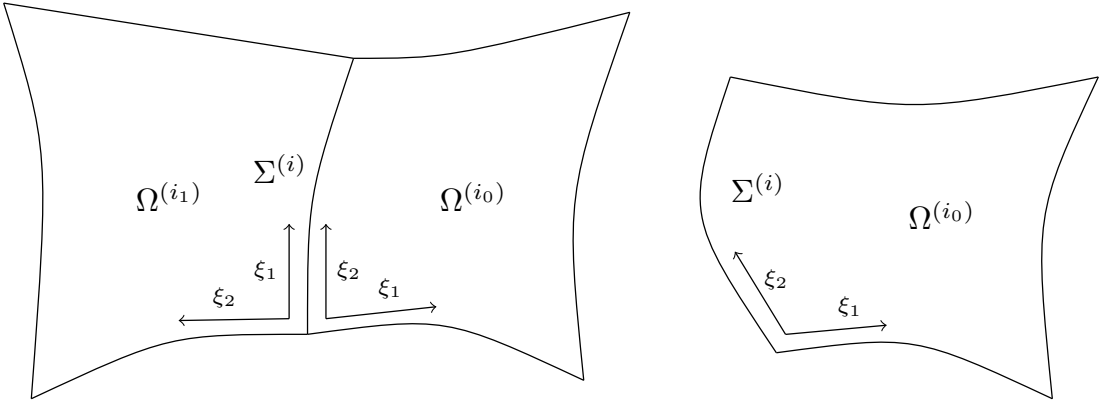

In case of a vertex~$\f{x}^{(i)}$, $i \in \indChi$, we distinguish between an inner and a boundary vertex. For any inner vertex~$\f{x}^{(i)}$, $i \in \indChiI$, with patch valence~$\nu_i \geq 3$, we respectively denote the patches and edges around the vertex~$\f{x}^{(i)}$ in 
counterclockwise order by $\Omega^{(i_m)}$
and $\Sigma^{(i_m)}$, 
for $m=0, \ldots, \nu_i-1$, see Fig.~\ref{fig:standard_form_vertex}~(left).
Moreover, we assume that 
the geometry mappings~$\f{F}^{(i_m)}$, $m=0,\ldots,\nu_i-1$, are parameterized in such a way that
\begin{equation*}
 \f{x}^{(i)}=\f{F}^{(i_m)}(0,0) \; \text{ for } m = 0, \ldots,\nu_i-1,
\end{equation*}
and
\begin{equation*}
\Sigma^{(i_{m+1})}=\f{F}^{(i_m)}(0,(0,1)) = \f{F}^{(i_{m+1})}((0,1),0) 
\; \text{ for } m =0, \ldots, \nu_i-1,
\end{equation*}
where the index~$m$ is considered to be modulo~$\nu_i$.
Similarly, for any boundary vertex~$\f{x}^{(i)}$, $i \in \indChiB$, with patch valence~$\nu_i \geq 1$, the patches and edges around the vertex~$\f{x}^{(i)}$ are labeled in counterclockwise order by $\Omega^{(i_m)}$, for $m=0, \ldots, \nu_i-1$, 
and $\Sigma^{(i_m)}$, 
for $m=0, \ldots, \nu_i$, see Fig.~\ref{fig:standard_form_vertex}~(right). Further, the geometry mappings~$\f{F}^{(i_m)}$, $m=0,\ldots,\nu_i-1$, are parameterized in such a way that
\begin{equation*}
 \f{x}^{(i)}=\f{F}^{(i_m)}(0,0) \; \text{ for } m = 0, \ldots,\nu_i-1,
\end{equation*}
the inner edges~$\Sigma^{(i_{m+1})}$ are given as
\begin{equation*}
\Sigma^{(i_{m+1})}=\f{F}^{(i_m)}(0,(0,1)) = \f{F}^{(i_{m+1})}((0,1),0) 
\; \text{ for } m =1, \ldots, \nu_i-1,
\end{equation*}
while the boundary edges ~$\Sigma^{(i_0)}$ and $\Sigma^{(i_{\nu_i})}$ are
\begin{equation*}
 \Sigma^{(i_{0})}=\f{F}^{(i_0)}((0,1),0) \mbox{ and } \Sigma^{(i_{\nu_i})}= \f{F}^{(i_{\nu_i-1})}(0,(0,1)).
\end{equation*}

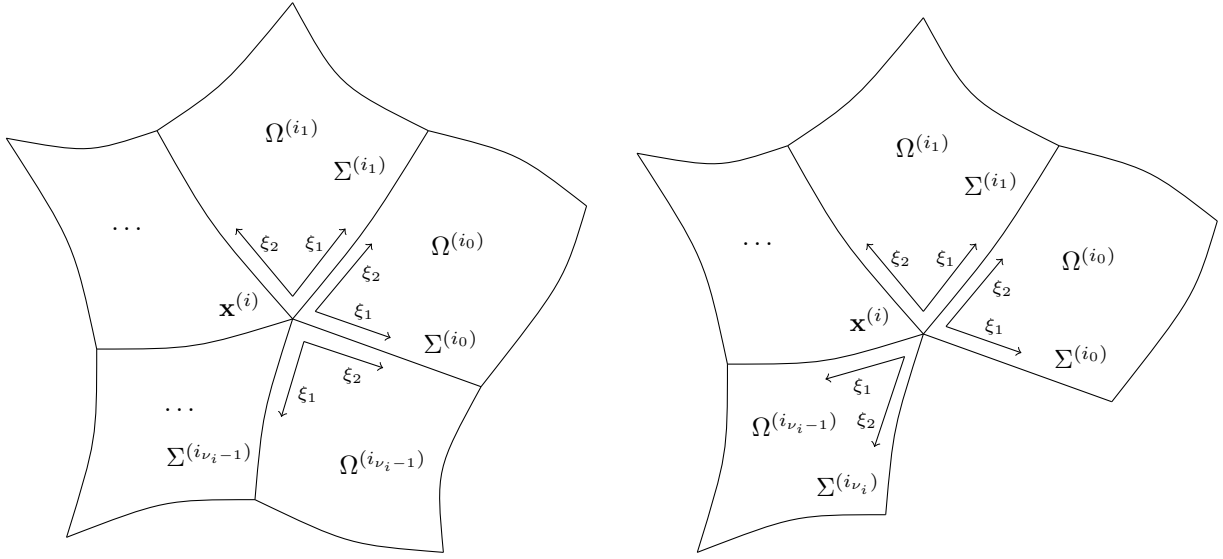
\begin{figure}
\begin{center}
\begin{tabular}{cc}
\resizebox{0.48\textwidth}{!}{
 \begin{tikzpicture}
  \coordinate(A) at (0,0); \coordinate(B) at (1.8,2.5); \coordinate(C) at (0,4.2); \coordinate(D) at (-1.8,2.5);  \coordinate(E) at (-3.8,2.4); 
  \coordinate(F) at (-2.6,-0.4); \coordinate(G) at (-3,-2.9); \coordinate(H) at (-0.5,-2.4); \coordinate(I) at (2,-3.1); 
  \coordinate(J) at (2.5,-0.9); \coordinate(K) at (3.9,1.5);
  \draw (A) .. controls (1,1.2) .. (B);
  \draw (A) .. controls (-1.1,1.25) .. (D);
  \draw (A) .. controls (-1.3,-0.4) .. (F);
  \draw (A) .. controls (-0.4,-1.25) .. (H);
  \draw (A) -- (J);
  \draw (B) .. controls (0.6,3.1) .. (C);
  \draw (C) .. controls (-0.9,3.1) .. (D);
  \draw (D) .. controls (-2.7,2.2) .. (E);
  \draw (E) .. controls (-2.9,1) .. (F);
  \draw (F) .. controls (-2.6,-1.6) .. (G);
  \draw (G) .. controls (-1.7,-2.4) .. (H);
  \draw (H) .. controls (0.75,-3) .. (I);
  \draw (I) .. controls (1.9,-2) .. (J);
  \draw (J) .. controls (3.6,0.5) .. (K);
  \draw (K) .. controls (3,2.2) .. (B);
  \node at (2.2,1) {$\Omega^{(i_0)}$};
  \node at (0,2.5) {$\Omega^{(i_1)}$};
  \node at (1.2,-1.9) {$\Omega^{(i_{\nu_i-1})}$};
  \node at (-2.2,1.2) {$\ldots$};
  \node at (-1.5,-1.2) {$\ldots$};
  \node at (-0.7,0.2) { $\f{x}^{(i)}$};
  \node at (2.1,-0.3) { $\Sigma^{(i_0)}$};
  \node at (0.9,2) { $\Sigma^{(i_1)}$};
  \node at (-1.1,-1.8) { $\Sigma^{(i_{\nu_i-1})}$};
  \draw[->] (0,0.3) -- (-0.75,1.2);
  \draw[->] (0,0.3) -- (0.7,1.2);
  \draw[->] (0.3,0.1) -- (1.05,1);
  \draw[->] (0.3,0.1) -- (1.3,-0.25);
  \draw[->] (0.15,-0.3) -- (1.2,-0.65);
  \draw[->] (0.15,-0.3) -- (-0.15,-1.3);
  \node at (-0.3,1.0) {\scriptsize $\xi_2$};
  \node at (0.3,1.0) {\scriptsize $\xi_1$};
  \node at (1.05,0.6) {\scriptsize $\xi_2$};
  \node at (0.95,0.1) {\scriptsize $\xi_1$};
  \node at (0.8,-0.75) {\scriptsize $\xi_2$};
  \node at (0.20,-1.0) {\scriptsize $\xi_1$};
\end{tikzpicture}
 }
 & 
 \resizebox{0.48\textwidth}{!}{
 \begin{tikzpicture}
  \coordinate(A) at (0,0); \coordinate(B) at (1.8,2.5); \coordinate(C) at (0,4.2); \coordinate(D) at (-1.8,2.5);  \coordinate(E) at (-3.8,2.4); 
  \coordinate(F) at (-2.6,-0.4); \coordinate(G) at (-3,-2.9); \coordinate(H) at (-0.5,-2.4); \coordinate(I) at (2,-3.1); 
  \coordinate(J) at (2.5,-0.9); \coordinate(K) at (3.9,1.5);
   \draw (A) .. controls (1,1.2) .. (B);
   \draw (A) .. controls (-1.1,1.25) .. (D);
   \draw (A) .. controls (-1.3,-0.4) .. (F);
  \draw (A) .. controls (-0.4,-1.25) .. (H);
  \draw (A) -- (J);
  \draw (B) .. controls (0.6,3.1) .. (C);
  \draw (C) .. controls (-0.9,3.1) .. (D);
  \draw (D) .. controls (-2.7,2.2) .. (E);
  \draw (E) .. controls (-2.9,1) .. (F);
   \draw (F) .. controls (-2.6,-1.6) .. (G);
  \draw (G) .. controls (-1.7,-2.4) .. (H);
 \draw (J) .. controls (3.6,0.5) .. (K);
  \draw (K) .. controls (3,2.2) .. (B);
  \node at (2.2,1) {$\Omega^{(i_0)}$};
  \node at (0,2.5) {$\Omega^{(i_1)}$};
  \node at (-1.7,-1.2) {$\Omega^{(i_{\nu_i-1})}$};
  \node at (-2.2,1.2) {$\ldots$};
  \node at (-0.7,0.2) { $\f{x}^{(i)}$};
  \node at (2.1,-0.3) { $\Sigma^{(i_0)}$};
  \node at (0.9,2) { $\Sigma^{(i_1)}$};
  \node at (-1.0,-2.0) { $\Sigma^{(i_{\nu_i})}$};
  \draw[->] (0,0.3) -- (-0.75,1.2);
  \draw[->] (0,0.3) -- (0.7,1.2);
  \draw[->] (0.3,0.1) -- (1.05,1);
  \draw[->] (0.3,0.1) -- (1.3,-0.25);
  \draw[->] (-0.25,-0.3) -- (-1.3,-0.6);
  \draw[->] (-0.25,-0.3) -- (-0.65,-1.5);
  \node at (-0.3,1.0) {\scriptsize $\xi_2$};
  \node at (0.3,1.0) {\scriptsize $\xi_1$};
  \node at (1.05,0.6) {\scriptsize $\xi_2$};
  \node at (0.95,0.1) {\scriptsize $\xi_1$};
  \node at (-0.75,-1.15) {\scriptsize $\xi_2$};
  \node at (-0.8,-0.7) {\scriptsize $\xi_1$};
\end{tikzpicture}
 }
\end{tabular}
\end{center}
\caption{Representation in standard form with respect to a vertex~$\f{x}^{(i)}$, $i \in \indChi$. Left: 
Edges~$\Sigma^{(i_0)}$, $\ldots$, $\Sigma^{(i_{\nu_i -1})}$ and patches $\Omega^{(i_0)}$, $\ldots$, $\Omega^{(i_{\nu_i-1})}$ around a common inner vertex~$\f{x}^{(i)}$, $i \in \indChiI$. Right: Edges~$\Sigma^{(i_0)}$, $\ldots$, $\Sigma^{(i_{\nu_i})}$ and patches $\Omega^{(i_0)}$, $\ldots$, $\Omega^{(i_{\nu_i-1})}$ around a common boundary vertex~$\f{x}^{(i)}$, $i \in \indChiB$.}
 \label{fig:standard_form_vertex}
\end{figure}

\begin{remark} \label{rem:orientation}
Obviously, the standard configuration cannot be attained around every vertex without changing the parameterizations. In fact, for each vertex ${\bf x}^{(i)}$, $i \in \indChi$, with patch valence $\nu_i$ one would have to define a suitable parameterization ${\bf G}_\chi^{(i,m)}$ of $\Omega^{(i_m)}$, for $m = 0, \ldots, \nu_i-1$, to recover the standard configuration. These parameterizations are obtained from $\f{F}^{(i_m)}$ by possibly reversing each one of the parametric directions, and swapping them, which gives a total of eight possible combinations (four if the Jacobian is assumed to be positive). Similar parameterizations ${\bf G}_\Sigma^{(i,0)}$ and ${\bf G}_\Sigma^{(i,1)}$ would be also needed for the standard configuration on each edge. We have preferred to keep $\f{F}^{(i_m)}$ to alleviate notation.
\end{remark}

\subsubsection{Analysis-suitable $G^1$ parameterizations and gluing data}\label{subsubsec:ASG1}
From now on we restrict ourselves to a particular class of multi-patch geometries~$\f{F}$, called analysis-suitable~$G^1$ multi-patch parameterizations, 
which possess for each inner edge~$\Sigma^{(i)}$, $i \in \indSigmaI$, linear functions~$\alpha^{(i,0)}$, $\alpha^{(i,1)}$, $\beta^{(i,0)}$ and 
$\beta^{(i,1)}$, with $\alpha^{(i,0)}$ and $\alpha^{(i,1)}$ relatively prime, such that for all $\xi \in [0,1]$
\begin{equation*}
 \alpha^{(i,0)}  (\xi) \alpha^{(i,1)}  (\xi) > 0
\end{equation*}
and
\begin{equation*}
\alpha^{(i,0)} (\xi)  \Dv \f{F}^{(i_1)}(\xi,0)  +
        \alpha^{(i,1)}(\xi) \Du  \f{F}^{(i_0)}(0,\xi) + \beta^{(i)} (\xi)
        \Dv  \f{F}^{(i_0)}(0,\xi)  =\boldsymbol{0},
\end{equation*}
with
\begin{equation} \label{eq:beta}
 \beta^{(i)} (\xi)= \alpha^{(i,0)} (\xi) \beta^{(i,1)}(\xi)+ \alpha^{(i,1)}(\xi)\beta^{(i,0)}(\xi),
\end{equation}
see \cite{CoSaTa16,KaSaTa19a}. This class of {\RV parameterizations} is exactly the one which allows the design of $C^1$ isogeometric spaces with optimal 
polynomial reproduction
properties \cite{CoSaTa16,KaSaTa17b}.
Details for the computation of the gluing data are given in Appendix~\ref{sec:gluing_data}. We note that, for each boundary edge~$\Sigma^{(i)}$, $i \in \indSigmaB$, we can simply assign trivial functions~$\alpha^{(i,0)} \equiv 1$ and 
$\beta^{(i,0)} \equiv 0$. 

Examples of analysis-suitable $G^1$ multi-patch geometries are e.g. piecewise bilinear parameterizations \cite{CoSaTa16,KaBuBeJu16,KaViJu15}, but there exist 
different methods to generate from a given, possibly non-analysis-suitable $G^1$ multi-patch geometry, an analysis-suitable $G^1$ parameterization, 
see \cite{KaSaTa17b,KaSaTa19b}.

In addition we define, for each inner edge $\Sigma^{(i)}$ with $i \in \indSigmaI$, the vectors
\begin{equation*} 
 \f{t}^{(i)}(\xi) = \Dv \f{F}^{(i_{0})}(0,\xi) = \Du \f{F}^{(i_{1})}(\xi,0),
\end{equation*}
and
\begin{equation*} 
\begin{array}{lll}
  \f{d}^{(i)}(\xi) &=& \frac{1}{\alpha^{(i,0)}(\xi)}\left( \Du \f F^{(i_{0})}(0,\xi) + \beta^{(i,0)}(\xi) \, \Dv \f F^{(i_{0})}(0,\xi)\right) \\
  &=& -\frac{1}{\alpha^{(i,1)}(\xi)}\left( \Dv \f F^{(i_{1})}(\xi,0) + \beta^{(i,1)}(\xi) \, \Du \f F^{(i_{1})}(\xi,0)\right).
\end{array}
\end{equation*}
In case of a boundary edge~$\Sigma^{(i)}$, $i \in \indSigmaB$, with $\Sigma^{(i)}=\f{F}^{(i_{0})}(0,(0,1))$,
we use the same definitions based only on the parameterization $\mathbf{F}^{(i_0)}$, noting that $\alpha^{(i,0)} \equiv 1$ and $\beta^{(i,0)}\equiv 0$.


\subsection{The $C^1$ multi-patch isogeometric spline space} \label{sec:C1space}
We now define the $C^1$ spline space on one level, and a particular subspace which maintains 
the same numerical approximation properties.

\subsubsection{Construction of a particular $C^1$ isogeometric subspace} \label{subsec:particularC1space}

The $C^1$ isogeometric space~$\UV$ with respect to the multi-patch geometry~$\f{F}$ is given by
\begin{equation} \label{eq:spaceV}
\UV = \{ \phi \in C^1(\overline{\Omega}) \ :\  \phi \circ \f{F}^{(i)} \in \US{p}{r}\otimes\US{p}{r} , i \in \indOmega \}. 
\end{equation}
The space is associated to a mesh, a partition of the domain determined by the knot vectors of the univariate spaces $\US{p}{r}$ and the parameterization $\mathbf{F}$, {\RV and which is given by
\begin{equation} \label{eq:mesh}
\GG = \left\{ \mathbf{F}^{(i)} (\hat{Q}) : i \in \indOmega, \hat{Q} = \left[
\textstyle{\frac{j}{k+1},\frac{j+1}{k+1}} \right], \text{ for } j = 0, \ldots, k \right\}.
\end{equation}
}

Since the space~$\UV$ has already for the case of two patches a complex structure and its dimension depends on the geometry \cite{KaSaTa17a},
we consider instead the simpler subspace~$\UW$ firstly introduced in \cite{KaSaTa19a}, 
which maintains the 
numerical approximation properties of~$\UV$ and whose
dimension is independent of the geometry. The space~$\UW$ is defined as 
\begin{align} 
& \UW = \mathrm{span}\, \PhiB , \text{ with }\notag \\ 
& \PhiB = \PhiB_\Omega \cup \Phi_\Sigma \cup \Phi_\chi, 
\text{ and } 
\PhiB_\Omega = \bigcup_{i \in \indOmega} \PhiB_{\Omega^{(i)}}, \;
\PhiB_\Sigma = \bigcup_{i \in \indSigma} \PhiB_{\Sigma^{(i)}}, \;
\PhiB_\chi = \bigcup_{i \in \indChi} \PhiB_{\f{x}^{(i)}}, \label{eq:basis_A}
\end{align}
where the three sets of basis functions are respectively called patch interior, edge and vertex basis functions, and are defined in detail below.
All functions are generated in such a way that they are $C^1$-smooth on $\Omega$ and for the case of vertex functions even $C^2$-smooth at the corresponding 
vertex~$\f{x}^{(i)}$. 
To ensure the design of the space~$\UW$, a minimal number~$k$ of different inner knots is needed,
given by $k \geq \max(0,\frac{5-p}{p-r-1})$,
as already requested in the definition of the spline space~$\US{p}{r}$ in Section~\ref{sec:multipatch}. Below, 
we summarize the construction of the single functions and refer to \cite{KaSaTa19a,KaSaTa19b} for more details.

\subsubsection{Patch interior basis functions} \label{subsec:interior}
The patch interior basis functions are ``standard'' isogeometric functions whose value and first derivatives are zero on every edge and vertex. We start defining the 
{\CB univariate index set $I = \{2, \ldots, n-3 \}$, and the multivariate index sets}
\begin{align*}
& {\CB \JOmega= I \times I, \quad \text{ with } \quad \JOmega \subset \widetilde{\bf J}_\Omega = \{\mathbf{j} = (j_1, j_2) : j_1, j_2 = 0, \ldots, n-1 \}.}
\end{align*}
Then, for each patch~$\Omega^{(i)}$, $i \in \indOmega$, we define the set of patch interior basis functions as
\begin{equation*}
\PhiB_{\Omega^{(i)}} = \left\{ \phiP{i}{\f{j}} \ : \ \f{j} \in \JOmega \right\} ,
\end{equation*}
where, {\CB defining $N_{\mathbf{j},p}^r(\xi_1,\xi_2) = N_{j_1,p}^r(\xi_1) N_{j_2,p}^r(\xi_2)$,} the functions~$\phiP{i}{\f{j}}$ are given by
\begin{equation*}
    \phiP{i}{\mathbf{j}} (\f{x}) =  
    \begin{cases}
 \left( N_{\mathbf{j},p}^r \circ \left(\f{F}^{(i)}\right)^{-1}\right)(\f{x}) & \mbox{if }\f \, \f{x} \in \overline{\Omega^{(i)}} ,\\
0 & \mbox{otherwise}.
\end{cases}
\end{equation*}
Therefore, the functions in $\PhiB_{\Omega^{(i)}}$ trivially have support contained in $\Omega^{(i)}$, and their value and their gradient vanish on the boundary of $\Omega^{(i)}$. Moreover, their maximal support is attained for regularity $r=p-2$, and consists of ${\MK \lceil \frac{p+1}{2} \rceil^2}$ elements, see Fig.~\ref{fig:support-interior}.

\begin{figure}[!t]
\begin{subfigure}[Patch interior function.]{
\hspace{-5mm} \includegraphics[width=0.33\textwidth,trim=15cm 9cm 7cm 5cm,clip]{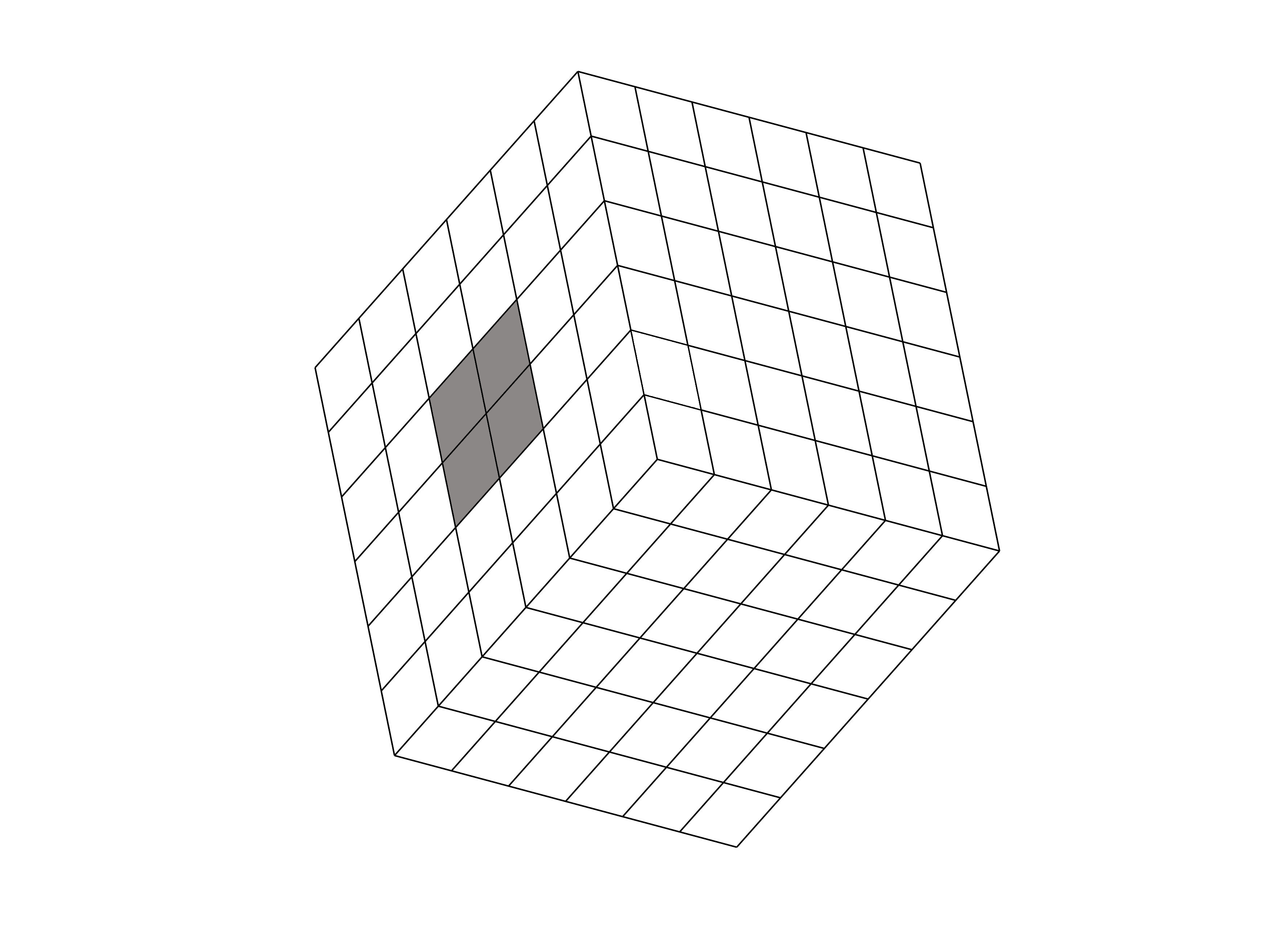}
\label{fig:support-interior}
}
\end{subfigure}
\begin{subfigure}[Edge function.]{
\hspace{-5mm} \includegraphics[width=0.33\textwidth,trim={15cm 9cm 7cm 5cm},clip]{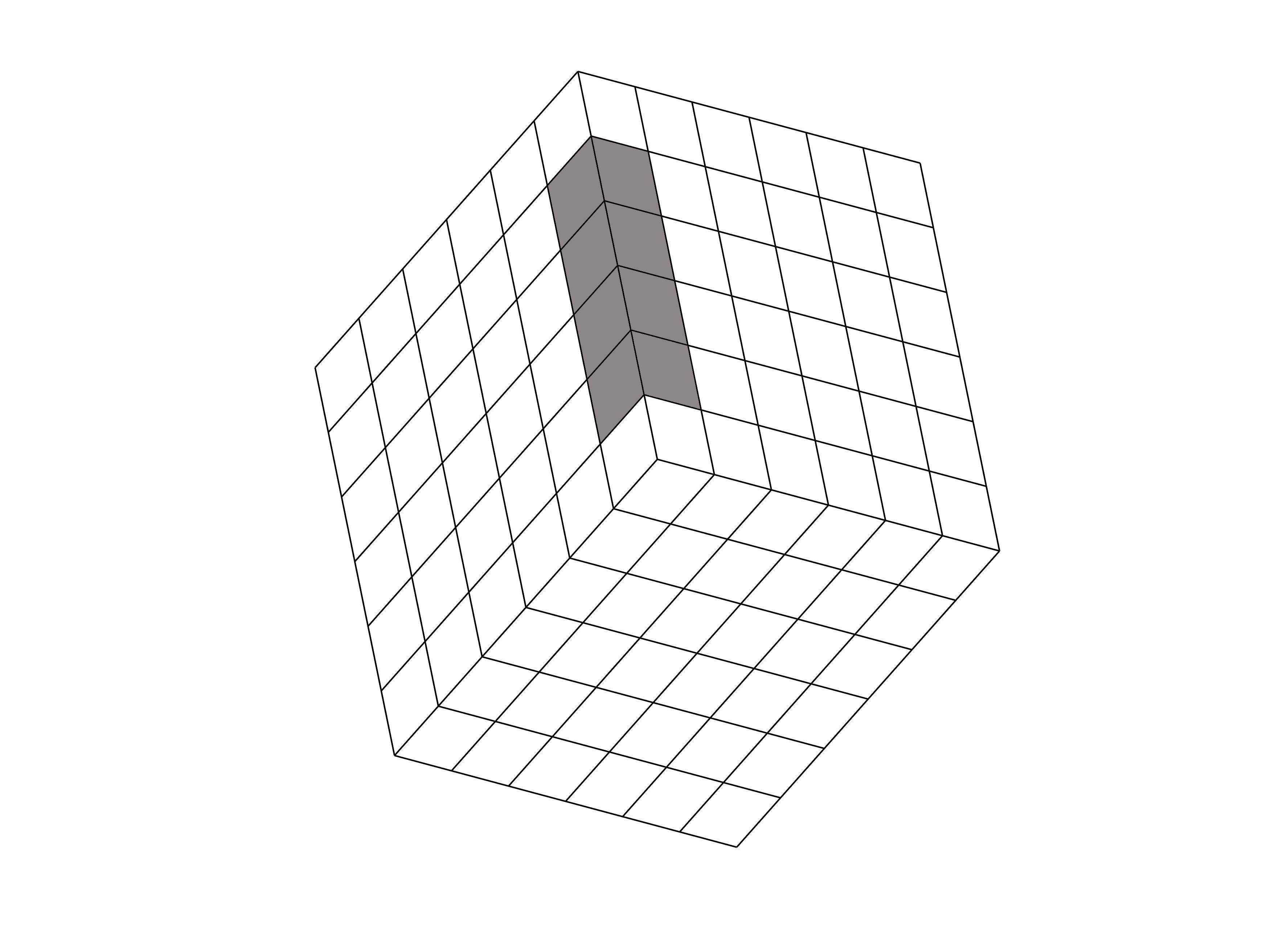}
\label{fig:support-edge}
}
\end{subfigure}
\begin{subfigure}[Vertex function.]{
\hspace{-5mm} \includegraphics[width=0.33\textwidth,trim={15cm 9cm 7cm 5cm},clip]{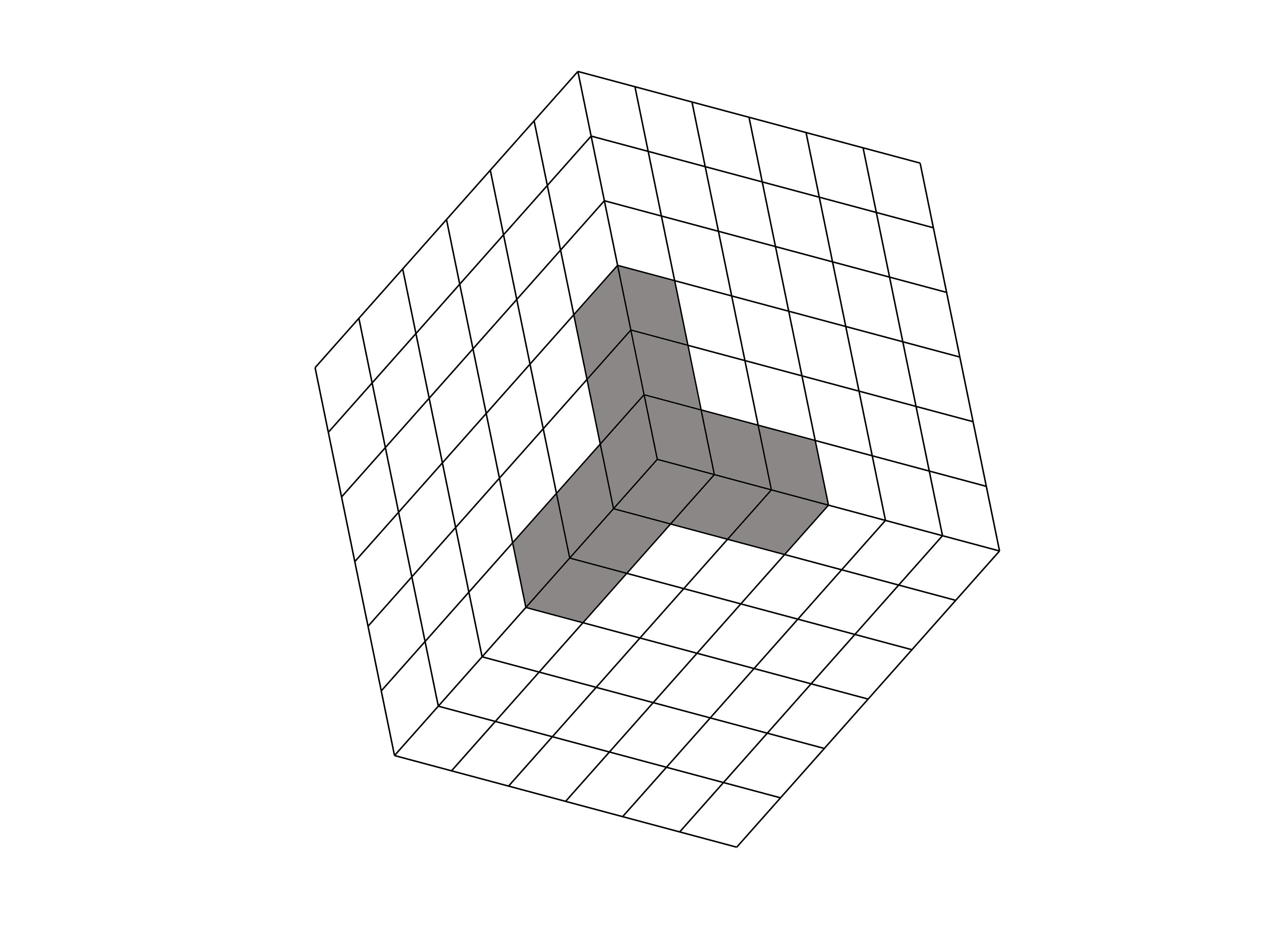}
\label{fig:support-vertex}
}
\end{subfigure}
\caption{Example of maximal support of a patch interior basis function (left), an edge basis function (center) and a vertex basis function (right) for degree $p=3$ and regularity $r=1$.}
\label{fig:basis_supp}
\end{figure}

Note that a function~$\phiP{i}{\f{j}}$ can be also defined for any index~$\f{j}$ of the extended index set $\widetilde{\bf J}_\Omega$, and for later use we also define the sets
\[
\widetilde{\PhiB}_{\Omega} = \bigcup_{i \in \indOmega} \widetilde{\PhiB}_{\Omega^{(i)}}, \text{ with } \widetilde{\PhiB}_{\Omega^{(i)}} = \left\{ \phiP{i}{\f{j}} \ : \ \f{j} \in \widetilde{\bf J}_\Omega \right\}.
\]
However, the functions in $\widetilde{\PhiB}_{\Omega} \setminus \PhiB_{\Omega}$ 
do not belong to the space $\UW$, and for this reason we call them ``extended'' patch interior functions.

\subsubsection{Edge basis functions} \label{sec:edge_functions}
Edge basis functions have support in the two patches adjacent to the edge, or one patch for boundary edges. We start defining the index sets
\begin{align*}
& \widetilde{\bf J}_{\Sigma}=\{\mathbf{j} = (j_1,j_2): j_1= 0,\ldots, n_{j_2}-1; \, j_2=0,1\}, \\
& \JSigma=\{\mathbf{j} = (j_1,j_2): j_1= 3-j_2,\ldots, n_{j_2}-4+j_2; \ j_2=0,1\} \subset \widetilde{\bf J}_{\Sigma},
\end{align*}
{\RV where $n_{j_2}$ will take values $n_0$ or $n_1$, as defined in Section~\ref{sec:univariate_functions}.}
Then, for each edge~$\Sigma^{(i)}$, $i \in \indSigma$, and assuming the same orientation described in Section~\ref{sec:multipatch}, we define the set of edge basis functions associated to the edge $\Sigma^{(i)}$ as 
\begin{equation*}
\PhiB_{\Sigma^{(i)}} = \left\{ \phiI{i}{\f{j}} \ : \ \f{j} \in \JSigma \right\} , 
\end{equation*}
where each basis function~$\phiI{i}{\f{j}}$, is defined as
\begin{equation*}
    \phiI{i}{\f{j}} (\f{x}) =  
    \begin{cases}
 \left( \fSigma_{\mathbf{j}}^{(i,m)} \circ \left(\f{F}^{(i_m)}\right)^{-1}\right)(\f{x}) & \mbox{if }\f \, \f{x} \in \overline{\Omega^{(i_m)}} , \mbox{ }m=0,1,\\
0 & \mbox{otherwise},
\end{cases}
\end{equation*}
where the functions $\fSigma_{\mathbf{j}}^{(i,m)} = \fSigma_{(j_1,j_2)}^{(i,m)}$ have been introduced in \cite{KaSaTa19a}. For completeness, we also give them in Appendix~\ref{sec:function-details}.

It is easy to verify that the functions in $\PhiB_{\Sigma^{(i)}}$ are supported in $\Omega^{(i_0)}$ and $\Omega^{(i_1)}$ (or $\Omega^{(i_0)}$ for boundary edges), and that their value, and first and second derivatives vanish on $\partial \Sigma^{(i)}$. Moreover, the maximal support of an edge function consists of $2 (p+1)$ elements, with $p+1$ elements on each patch, see Fig.~\ref{fig:support-edge}.

Analogously to the patch interior functions, we can also define the edge function~$\phiI{i}{\f{j}}$ for each index~$\f{j} \in \widetilde{\bf J}_{\Sigma}$, giving the sets of functions
\[
\widetilde{\PhiB}_{\Sigma} = \bigcup_{i \in \indSigma} \widetilde{\PhiB}_{\Sigma^{(i)}}, \text{ with } \widetilde{\PhiB}_{\Sigma^{(i)}} = \left\{ \phiI{i}{\f{j}} \ : \ \f{j} \in \widetilde{\bf J}_{\Sigma} \right\}.
\]
However, the functions in $\widetilde{\PhiB}_{\Sigma} \setminus \PhiB_{\Sigma}$ 
do not belong to the space $\UW$, so we call them ``extended'' edge functions.
 
\begin{remark}
In the definition of $\fSigma_{(j_1,j_2)}^{(i,m)}$, the subindex $j_2$ refers to the type of edge basis functions: the values 0 and 1 respectively refer to \emph{trace} and \emph{derivative} edge functions. These were denoted by $\Phi^{\Gamma_0}$ and $\Phi^{\Gamma_1}$ in \cite{BrGiKaVa20}. We have preferred to follow the indexing in \cite{KaSaTa19a,KaSaTa19b}, to reduce the number of symbols.
\end{remark}

\subsubsection{Vertex basis functions} \label{sec:vertex_functions}
Vertex basis functions have support in all the patches adjacent to the vertex, {\RV and there} are always six vertex functions associated to each vertex, independently of its valence. {\MK These properties of the vertex functions are due to the $C^2$ interpolation condition~\eqref{eq:C2interpolation} {\RV below}.} For each vertex~$\f{x}^{(i)}$, $i \in \indChi$ we define the set of vertex basis functions as
\begin{equation*}
\PhiB_{\f{x}^{(i)}} = \left\{ \phiV{i}{\f{j}} \ : \ \f{j} \in \JChi  \right\}, \text{ with } \JChi = \{\mathbf{j} = (j_1,j_2): j_1,j_2 =0,1,2; \ j_1+j_2 \leq 2 \}.
\end{equation*}
To define the vertex basis functions $\phiV{i}{\f{j}}$, and recalling that the patch valence is denoted by $\nu_i$, we first define the factor 
\begin{equation} \label{eq:sigma-factor}
  \sigma_i = \left (  \frac{1}{ p (k+1)  \nu_i} \sum_{m = 0}^{\nu_i-1} \|\nabla \f{F}^{(i_{m})} (0,0) \|_{\infty} \right ) ^{-1},
\end{equation}
which will be used to uniformly scale the vertex functions with respect to the infinity norm. Then, for each index $\f{j}=(j_1,j_2) \in \JChi$, the corresponding vertex function is defined as
\begin{equation}\label{vertdef}
    \phiV{i}{\mathbf{j}} (\f{x}) =  
    \begin{cases}
 \sigma_i^{j_1+j_2}\left( \left( g_{\mathbf{j}}^{(i,m,\rm{prec})} + g_{\mathbf{j}}^{(i,m,\rm{next})} -h_{\mathbf{j}}^{(i,m)}   \right)
 \circ \left(\f{F}^{(i_m)}\right)^{-1}\right)(\f{x}) \quad \mbox{} & \\
\hfill \mbox{if }\f \, \f{x} \in \overline{\Omega^{(i_m)}} , \, m=0,\ldots,\nu_i-1,\\
0 \hfill \mbox{otherwise},
\end{cases}
\end{equation}
where the functions $g_{\mathbf{j}}^{(i,m,\rm{prec})}$ and $g_{\mathbf{j}}^{(i,m,\rm{next})}$ respectively involve the edge preceding and following the patch $\Omega^{(i_m)}$, in counterclockwise direction, while the function $h_{\mathbf{j}}^{(i,m)}$ uses information from both edges. These functions were defined in \cite{KaSaTa19a}, and for convenience we show their expressions in Appendix~\ref{sec:function-details}.
The construction of the vertex functions~$\phiV{i}{\f{j}}$, for $\f{j} \in \JChi$, ensures that their support is contained in the patches around the vertex ${\bf x}^{(i)}$, 
and
\begin{equation} \label{eq:C2interpolation}
\Du^{z_1} \Dv^{z_2} \left(\phiV{i}{\f{j}}\right) (\f{x}^{(i)}) = 
 \sigma_i^{j_1+j_2} {\RV \delta_{j_1 z_1} \delta_{j_2 z_2}}, \mbox{ } 0 \leq z_1,z_2 \leq 2, \mbox{ }z_1+z_2 \leq 2,
\end{equation}
see \cite{KaSaTa19a}. Moreover, the support of a vertex basis function consists at most of five elements per patch, for a total of $5 \nu_i$ elements, see Fig.~\ref{fig:support-vertex}.

\subsection{Representation of the basis in terms of standard B-splines} \label{subsec:standard_representation}
In the proofs of linear independence in Section~\ref{sec:lli_one_level}, and also for an efficient implementation of the $C^1$ space, we will make use of the representation of the $C^1$ basis functions of the previous section in terms of the standard B-spline basis. {\MK For this purpose, we will introduce below several column vectors of functions, \RV both for standard B-splines and for the $C^1$ basis functions, and with some abuse of notation 
we will define the vectors using a double index, with the meaning that the entries on each vector are ordered moving first on the first index, and then on the second, as it {\MK is} usually done for the implementation \cite{vazquez2016}}.

{\RV For indices $i \in \indOmega, j \in \indSigma$ and $k \in \indChi$, let us use the notation}
\begin{equation} \label{eq:vector-all}
\phivec_{\Omega^{(i)}}=[\phi^{\Omega^{(i)}}_{\bf j}]_{{\bf j}\in \JOmega}, \quad
\phivec_{\Sigma^{(j)}}=[\phi^{\Sigma^{(j)}}_{\bf j}]_{{\bf j}\in \JSigma}, \quad
\phivec_{{\bf x}^{(k)}}=[\phi^{{\bf x}^{(k)}}_{\bf j}]_{{\bf j}\in \JChi},
\end{equation}
to respectively indicate the {\MK column} vectors of patch interior basis functions, of edge basis functions and of vertex basis functions. 
As we want to write them in terms of standard B-splines, we also introduce the {\MK column} vector of B-splines basis functions of the space $\US{p}{r} \otimes \US{p}{r}$ mapped into $\Omega^{(i)}$, given by
\begin{align*} 
& \mathbf{N}^{(i)} 
=[N_{\mathbf{j},p}^r \circ (\mathbf{F}^{(i)})^{-1}]^T_{\mathbf{j} \in \widetilde{\mathbf{J}}_\Omega}.
\end{align*}
{\CB Recalling} the set of indices $I$ from Section~\ref{subsec:interior}, we also define the subvectors
\begin{equation}\label{stdbspl-new}
\begin{array}{ll}
{\bf N}_{0}^{(i)}=[N_{\mathbf{j},p}^r\circ({\bf F}^{(i)})^{-1}]_{\mathbf{j} \in \{0,1\} \times \{0,1\}}, &
{\bf N}_{1}^{(i)}=[N_{\mathbf{j},p}^r\circ({\bf F}^{(i)})^{-1}]_{\mathbf{j} \in I \times \{0,1\}}, \\
{\bf N}_{2}^{(i)}=[N_{\mathbf{j},p}^r\circ({\bf F}^{(i)})^{-1}]_{\mathbf{j} \in \{0,1\} \times I}, &
{\bf N}_{3}^{(i)}=[N_{\mathbf{j},p}\circ({\bf F}^{(i)})^{-1}]_{\mathbf{j} \in I \times I},
\end{array}
\end{equation}
respectively corresponding to the basis functions with nonzero value or nonzero first derivatives on the bottom left vertex, on the bottom edge (but zero on every vertex), on the left edge (but zero on every vertex), and internal functions with zero value and derivative on every edge, {\MK see Fig.~\ref{fig:subvectors}}. With some abuse of notation, we denote in the same way the vectors of B-splines extended by zero outside $\Omega^{(i)}$.

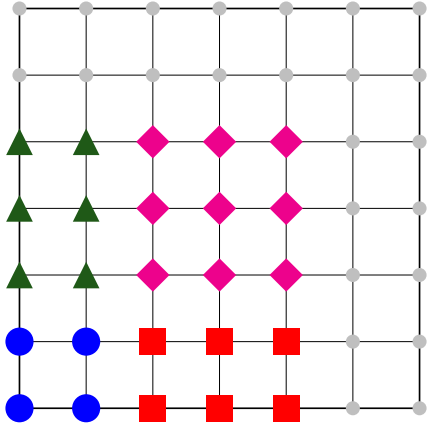
\begin{figure}
\begin{center}
\begin{tabular}{cc}
\resizebox{0.35\textwidth}{!}{
 \begin{tikzpicture}
  \coordinate(A) at (0,0); \coordinate(B) at (6,0); \coordinate(C) at (0,6); \coordinate(D) at (6,6); 
  \draw[line width=0.25mm] (A) -- (B);
  \draw[line width=0.25mm] (B) -- (D);
  \draw[line width=0.25mm] (D) -- (C);
  \draw[line width=0.25mm] (C) -- (A);
  \draw[line width=0.15mm] (0,1) -- (6,1);
  \draw[line width=0.15mm] (0,2) -- (6,2);
  \draw[line width=0.15mm] (0,3) -- (6,3);
  \draw[line width=0.15mm] (0,4) -- (6,4);
   \draw[line width=0.15mm] (0,5) -- (6,5);
  \draw[line width=0.15mm] (1,0) -- (1,6);
  \draw[line width=0.15mm] (2,0) -- (2,6);
  \draw[line width=0.15mm] (3,0) -- (3,6);
  \draw[line width=0.15mm] (4,0) -- (4,6);
  \draw[line width=0.15mm] (5,0) -- (5,6);
  \fill[blue] (0,0) circle (6pt);
  \fill[blue] (1,0) circle (6pt);
  \fill[blue] (0,1) circle (6pt);
  \fill[blue] (1,1) circle (6pt);
  \draw [fill=red, draw=none] (1.8,-0.2) rectangle (2.2,0.2);
  \draw [fill=red, draw=none]
  (2.8,-0.2) rectangle (3.2,0.2);
  \draw [fill=red, draw=none]
  (3.8,-0.2) rectangle (4.2,0.2);
  \draw [fill=red, draw=none]
  (1.8,0.8) rectangle (2.2,1.2);
  \draw [fill=red, draw=none]
  (2.8,0.8) rectangle (3.2,1.2);
  \draw [fill=red, draw=none]
  (3.8,0.8) rectangle (4.2,1.2);
  \draw[fill=darkgreen, draw=none] (-0.2,1.8) -- (0.2,1.8) -- (0,2.2)-- (-0.2,1.8);
   \draw[fill=darkgreen, draw=none] (-0.2,2.8) -- (0.2,2.8) -- (0,3.2)-- (-0.2,2.8);
    \draw[fill=darkgreen, draw=none] (-0.2,3.8) -- (0.2,3.8) -- (0,4.2)-- (-0.2,3.8);
   \draw[fill=darkgreen, draw=none] (0.8,1.8) -- (1.2,1.8) -- (1,2.2)-- (0.8,1.8);
   \draw[fill=darkgreen, draw=none] (0.8,2.8) -- (1.2,2.8) -- (1,3.2)-- (0.8,2.8);
    \draw[fill=darkgreen, draw=none] (0.8,3.8) -- (1.2,3.8) -- (1,4.2)-- (0.8,3.8);
    \draw[fill=magenta, draw=none] (2,1.75) -- (1.75,2) -- (2,2.25)-- (2.25,2)--(2,1.75);
    \draw[fill=magenta, draw=none] (3,1.75) -- (2.75,2) -- (3,2.25)-- (3.25,2)--(3,1.75);
    \draw[fill=magenta, draw=none] (4,1.75) -- (3.75,2) -- (4,2.25)-- (4.25,2)--(4,1.75);
    \draw[fill=magenta, draw=none] (2,2.75) -- (1.75,3) -- (2,3.25)-- (2.25,3)--(2,2.75);
    \draw[fill=magenta, draw=none] (3,2.75) -- (2.75,3) -- (3,3.25)-- (3.25,3)--(3,2.75);
    \draw[fill=magenta, draw=none] (4,2.75) -- (3.75,3) -- (4,3.25)-- (4.25,3)--(4,2.75);
    \draw[fill=magenta, draw=none] (2,3.75) -- (1.75,4) -- (2,4.25)-- (2.25,4)--(2,3.75);
    \draw[fill=magenta, draw=none] (3,3.75) -- (2.75,4) -- (3,4.25)-- (3.25,4)--(3,3.75);
    \draw[fill=magenta, draw=none] (4,3.75) -- (3.75,4) -- (4,4.25)-- (4.25,4)--(4,3.75);
    \fill[lightgray] (5,0) circle (3pt);
    \fill[lightgray] (6,0) circle (3pt);
    \fill[lightgray] (5,1) circle (3pt);
    \fill[lightgray] (6,1) circle (3pt);
    \fill[lightgray] (5,2) circle (3pt);
    \fill[lightgray] (6,2) circle (3pt);
    \fill[lightgray] (5,3) circle (3pt);
    \fill[lightgray] (6,3) circle (3pt);
    \fill[lightgray] (5,4) circle (3pt);
    \fill[lightgray] (6,4) circle (3pt);
    \fill[lightgray] (5,5) circle (3pt);
    \fill[lightgray] (6,5) circle (3pt);
    \fill[lightgray] (5,6) circle (3pt);
    \fill[lightgray] (6,6) circle (3pt);
    \fill[lightgray] (0,5) circle (3pt);
    \fill[lightgray] (0,6) circle (3pt);
    \fill[lightgray] (1,5) circle (3pt);
    \fill[lightgray] (1,6) circle (3pt);
    \fill[lightgray] (2,5) circle (3pt);
    \fill[lightgray] (2,6) circle (3pt);
    \fill[lightgray] (3,5) circle (3pt);
    \fill[lightgray] (3,6) circle (3pt);
    \fill[lightgray] (4,5) circle (3pt);
    \fill[lightgray] (4,6) circle (3pt);
\end{tikzpicture}
}
\end{tabular}
\end{center}
\caption{\MK Ordering of the basis functions ${\bf N}_{0}^{(i)}$ (blue circles), ${\bf N}_{1}^{(i)}$ (red squares), ${\bf N}_{2}^{(i)}$ (green triangles) and ${\bf N}_{3}^{(i)}$ (magenta diamonds) for a patch with $7 \times 7$ basis functions. The basis functions in ${\bf N}^{(i)}$ not contained in any of those vectors are visualized as small gray dots.}
 \label{fig:subvectors}
\end{figure}

We first note that the patch interior basis functions in the vector $\phivec_{\Omega^{(i)}}$ have their support contained in the patch $\Omega^{(i)}$, where they coincide with standard B-splines, so their representation in terms of B-splines is trivially given by $\phivec_{\Omega^{(i)}} = \mathbf{N}_3^{(i)}$.

The edge basis functions associated with an edge $\Sigma^{(i)}$ in the multi-patch case are a subset of the edge functions for the two patch case. Recalling the notation for ``extended'' edge functions in the previous section, and introducing the corresponding vector of functions $\widetilde \phivec_{\Sigma^{(i)}}:=[\phi^{\Sigma^{(i)}}_{\bf j}]_{{\bf j}\in \widetilde{\bf J}_\Sigma}$, we know from the two-patch case \cite{KaSaTa17a,BrGiKaVa20} that edge functions can be written as standard B-splines in the form
\begin{equation}\label{reptwoedge}
\widetilde \phivec_{\Sigma^{(i)}} = \widetilde E_{i,0}{\bf N}^{(i_0)} + \widetilde E_{i,1}{\bf N}^{(i_1)},
\end{equation}
where the only B-splines that play a role are the ones with non-vanishing value or derivative on the edge.
Therefore the relation for the multi-patch case is immediately given by
\begin{equation}\label{repmultiedge}
\phivec_{\Sigma^{(i)}} = E_{i,0}{\bf N}_2^{(i_0)} + E_{i,1}{\bf N}_1^{(i_1)},
\end{equation} 
where $E_{i,k}$ is the submatrix of $\widetilde E_{i,k}$ containing only the rows corresponding to the edge functions in $\PhiB_{\Sigma^{(i)}}$, i.e., to the indices $\JSigma$, and the columns of B-splines with nonzero coefficients.

For the vertex basis functions 
associated with the vertex ${\bf x}^{(i)}$, from their definition \eqref{vertdef} and equations \eqref{eq:func_g1}-\eqref{eq:func_g3} follows the relation
\begin{equation}\label{repvert}
\phivec_{{\bf x}^{(i)}}= \sum_{m=0}^{\nu_i-1} \left(K_{i,m}\widehat E_{i_m,1} 
\begin{bmatrix}
{\bf N}_0^{(i_m)} \\
{\bf N}_1^{(i_m)}
\end{bmatrix}
+ K_{i,m+1}\widehat E_{i_{m+1},0} 
\begin{bmatrix}
{\bf N}_0^{(i_m)} \\
{\bf N}_2^{(i_m)}
\end{bmatrix}
- V_{i,m}{\bf N}_0^{(i_m)} 
\right),
\end{equation}
where $\Sigma^{(i_{m})}$ and $\Sigma^{(i_{m+1})}$ are the two edges of the patch $\Omega^{(i_m)}$ containing the vertex ${\bf x}^{(i)}$, $\widehat E_{i_m,1}$ and $\widehat E_{i_{m+1},0}$ are the submatrices of $\widetilde E_{i_m,1}$ and $\widetilde E_{i_{m+1},0}$ containing only the rows corresponding to the five ``extended'' edge functions in $\widetilde \PhiB_{\Sigma^{(i_{m})}}\setminus \PhiB_{\Sigma^{(i_{m})}}$ and $\widetilde \PhiB_{\Sigma^{(i_{m+1})}}\setminus\PhiB_{\Sigma^{(i_{m+1})}}$ close to the vertex, respectively, and the columns with nonzero coefficients.
The detailed computations which lead to the matrices $K_{i,m}$, $K_{i,m+1}$ and $V_{i,m}$, based on using the expression of the modified basis functions from Appendix~\ref{sec:modified_basis},
are given in Appendix~\ref{sec:computations} for general regularity $r \le p-2$. For convenience we give their expressions here for the case $r=p-2$.


The matrices $K_{i,m}$, $K_{i,m+1}$ are of size $6\times 5$ and each one of their rows, which corresponds to a different value of ${\bf j} = (j_1, j_2)$, is of the form, for $s= m, m+1$, 
\begin{align*}
\sigma_i^{j_1+j_2}\left[\begin{array}{c}
\coefc^{(i_{s})}_{\mathbf{j},0}, 
\coefc^{(i_{s})}_{\mathbf{j},0} + \frac{\coefc^{(i_{s})}_{\mathbf{j},1}}{p(k+1)} , 
\coefc^{(i_{s})}_{\mathbf{j},0} + \frac{3 \coefc^{(i_{s})}_{\mathbf{j},1}}{p(k+1)} + \frac{2 \coefc^{(i_{s})}_{\mathbf{j},2}}{p(p-1)(k+1)^2} , 
\frac{\coefd^{(i_{s})}_{\mathbf{j},0}}{p(k+1)} 
\frac{\coefd^{(i_{s})}_{\mathbf{j},0}}{p(k+1)}+\frac{\coefd^{(i_{s})}_{\mathbf{j},1}}{p(p-1)(k+1)^2}
\end{array}
 \right],
\end{align*} 
with $\sigma_i$ being the factor in \eqref{eq:sigma-factor}, and the $\coefc_\mathbf{j}$ and $\coefd_\mathbf{j}$ coefficients defined in Appendix~\ref{sec:function-details}.

The matrix $V_{i,m}$ is of size $6\times 4$ and each one of their rows, corresponding to a different value of ${\bf j} = (j_1, j_2)$, is then of the form
\begin{align*}
\sigma_i^{j_1+j_2}\left[\begin{array}{c}
\coefc_{\mathbf{j},0}^{(i_m)}  ,  \,
\coefc_{\mathbf{j},0}^{(i_m)} + \frac{\coefc_{\mathbf{j},1}^{(i_m)}}{p(k+1)} , \,
\coefc_{\mathbf{j},0}^{(i_m)} + \frac{\coefc_{\mathbf{j},1}^{(i_{m+1})}}{p(k+1)},  \,
\coefc_{\mathbf{j},0}^{(i_m)} + \frac{\left(\coefc_{\mathbf{j},1}^{(i_m)}+\coefc_{\mathbf{j},1}^{(i_{m+1})}+\frac{\coefe_{\mathbf{j},(1,1)}^{(i_m)}}{p(k+1)}\right)}{p(k+1)}
\end{array}
 \right],
\end{align*}
with the four columns corresponding to the four B-splines in $\mathbf{N}_0^{(i_m)}$, and we have used the relationships between the $\coefe_{\mathbf{j}}$ and the $\coefc_{\mathbf{j}}$ coefficients in Appendix~\ref{sec:function-details}.

These considerations allow us to write all the $C^1$ basis functions restricted to the patch $\Omega^{(k)}$ as linear combinations of the (mapped) B-spline basis of $\US{p}{r} \otimes \US{p}{r}$.  That is, there exists a matrix $C_k$ such that $\phivec\vert_{\Omega^{(k)}} = C_k \mathbf{N}^{(k)}$, and using the notation in \eqref{eq:vector-all} the vector $\phivec\vert_{\Omega^{(k)}}$ collects all the non-vanishing basis functions on $\Omega^{(k)}$, i.e., the patch interior functions $\phivec_{\Omega^{(k)}}$, the edge functions $\phivec_{\Sigma^{(i)}}$ from the four edges on the boundary of $\Omega^{(k)}$, and the vertex functions $\phivec_{\mathbf{x}^{(i)}}$ from the four vertices on the boundary of $\Omega^{(k)}$. It is then possible, using standard techniques, to pass from the B-spline representation to the Bernstein polynomial representation, the so-called B\'ezier extraction.

Finally, since there is a one-to-one correspondence between ``extended'' patch interior functions and standard B-splines, and employing the latter representation, for a function~$\phi \in \UW$ we denote by $\coeff(\phi)$ the set~$(i,\mathbf{j})\in \indOmega \times \widetilde{\bf J}_\Omega $ such that the corresponding coefficient~$\coefet_{\mathbf{j}}^{(i)}$ in the B-spline representation is nonzero, i.e.
\[
{\RV \coeff(\phi) = \left\{(i,\mathbf{j}) \in \indOmega  \times  \widetilde{\bf J}_\Omega : \coefet_{\mathbf{j}}^{(i)} \neq 0 \right\}, \text{ with }\phi(\f{x}) =\sum_{i\in \indOmega}\sum_{\mathbf{j} \in \widetilde{\bf J}_\Omega} \coefet_{\mathbf{j}}^{(i)} \phiP{i}{\f{j}}(\f{x}). }
\]
We use a similar notation for a set of functions $\Psi \subset \UW$, namely
\begin{equation}\label{eq:coeff_set}
\coeff(\Psi) = \bigcup_{\psi \in \Psi} \coeff(\psi),
\end{equation}
the interesting case being when $\Psi \subset \PhiB$ is a subset of the basis.

\section{Theoretical results for the $C^1$ spline space on one level} \label{sec:theoretical_onelevel}
In the following we analyze some properties of the subspace $\UW$ that will be needed to apply the construction of the hierarchical space. 
\subsection{Characterization of the space $\UV$ and the subspace $\UW$}
The characterization of the subspace $\UW$, that we will use to prove nestedness in the hierarchical construction, was only given implicitly in previous works. We introduce it here explicitly for the sake of clarity.

The space~$\UV$ in \eqref{eq:spaceV} can be characterized as follows (see \cite{CoSaTa16,KaSaTa19a,KaSaTa19b}): A function~$\phi$ belongs to the space~$\UV$ if and only if for each patch~$\Omega^{(i)}$, $i \in \indOmega$, the functions $\phi \circ \f{F}^{(i)} $ satisfy that
\begin{equation} \label{eq:space}
 \phi \circ \f{F}^{(i)} \in \US{p}{r}\otimes\US{p}{r},
\end{equation}
and for each inner edge~$\Sigma^{(i)}$, $i \in \indSigmaI$, with the two corresponding neighboring patches~$\f{F}^{(i_0)}$ and $\f{F}^{(i_1)}$ possessing the same orientation as described in Section~\ref{sec:multipatch}, the functions $\phi \circ \f{F}^{(i_0)}$ and $\phi \circ \f{F}^{(i_1)}$ fulfill
\begin{equation} \label{eq:trace_init}
 \left( \phi \circ \f{F}^{(i_0)} \right) (0,\xi) = \left( \phi \circ \f{F}^{(i_1)} \right) (\xi,0), \mbox{ } \xi \in [0,1],
\end{equation}
and
\begin{equation*} 
\alpha^{(i,0)} (\xi)  \Dv \big(\phi \circ \f{F}^{(i_1)} \big)(\xi,0) +
        \alpha^{(i,1)}(\xi) \Du  \big( \phi \circ \f{F}^{(i_0)}\big)(0,\xi) 
+ \beta^{(i)} (\xi)
        \Dv  \big(\phi \circ \f{F}^{(i_0)}\big)(0,\xi)  =0
\end{equation*}
for $\xi \in [0,1]$. Due to~\eqref{eq:beta}, the previous equation 
is further equivalent to
\begin{equation} \label{eq:derivative_init2}
\begin{array}{c}
 \frac{1}{\alpha^{(i,0)}(\xi)}\left( \Du \left( \phi \circ \f F^{(i_{0})}\right)(0,\xi) + \beta^{(i,0)}(\xi) \, \Dv \left( \phi \circ \f F^{(i_{0})}\right)(0,\xi) \right)  =  \\
   -\frac{1}{\alpha^{(i,1)}(\xi)} \left( \Dv \left( \phi \circ \f F^{(i_{1})}\right)(\xi,0) + \beta^{(i,1)}(\xi) \, \Du \left(\phi \circ \f F^{(i_{1})}\right)(\xi,0) \right).
   \end{array}
\end{equation}
We denote for each inner edge~$\Sigma^{(i)}$, $i \in \indSigmaI$, the equally valued terms~\eqref{eq:trace_init} and \eqref{eq:derivative_init2} by the functions $f_0^{(i)}: [0,1] \rightarrow \R$ and $f_1^{(i)}:[0,1] \rightarrow \R$, respectively, which describe the trace and a specific directional derivative of the function~$\phi$ across the associated interface~$\Sigma^{(i)}$, see e.g. \cite{BrGiKaVa20,CoSaTa16,KaSaTa17a} for more details. Analogously, we define for each boundary edge~$\Sigma^{(i)}$, $i \in \indSigmaB$, with the associated patch~$\f{F}^{(i_0)}$ possessing the same orientation as described in Section~\ref{sec:multipatch}, the functions~$f^{(i)}_0$ and $f^{(i)}_1$ as the left-hand sides of \eqref{eq:trace_init} and \eqref{eq:derivative_init2}, respectively,
where $f^{(i)}_1$ can be simplified due to the selection of $\alpha^{(i,0)} \equiv 1$ and $\beta^{(i,0)} \equiv 0$ for boundary edges.

Equations \eqref{eq:space}--\eqref{eq:derivative_init2} fully characterize the space $\UV$. 
Similarly, we can give a characterization for the subspace $\UW$, that we state in the following proposition. The proof is a direct consequence of the construction of the basis functions, and is omitted.
\begin{proposition}\label{prop:characterization}
A function~$\phi$ belongs to $\UW$ if and only if it satisfies the conditions~\eqref{eq:space}, \eqref{eq:trace_init} and \eqref{eq:derivative_init2}, and moreover for each edge~$\Sigma^{(i)}$, $i \in \indSigma$, it holds that $f_0^{(i)} \in \US{p}{r+1} \mbox{ and } f_1^{(i)} \in \US{p-1}{r},$
respectively, and for each vertex~$\f{x}^{(i)}$, $i \in \indChi$, we have $\phi \in C^2(\f{x}^{(i)})$.
\end{proposition}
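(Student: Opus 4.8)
The plan is to prove the two inclusions separately. Since conditions~\eqref{eq:space}, \eqref{eq:trace_init} and \eqref{eq:derivative_init2} already characterize the full $C^1$ space $\UV$, and $\UW\subseteq\UV$ because every generator of $\PhiB$ is $C^1$ on $\overline{\Omega}$ and piecewise in $\US{p}{r}\otimes\US{p}{r}$, the only new content is to match the edge regularity conditions ($f_0^{(i)}\in\US{p}{r+1}$, $f_1^{(i)}\in\US{p-1}{r}$) and the vertex condition ($\phi\in C^2(\f{x}^{(i)})$) against the construction of the basis.

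For the forward implication I would argue by linearity: the stated conditions are linear constraints, so it suffices to verify them on each generator. A patch interior function $\phiP{i}{\f{j}}$ vanishes together with its gradient on every edge and has zero second-order jet at every vertex, so it satisfies all conditions trivially. An edge function $\phiI{i}{\f{j}}$ is built, by its definition through $\fSigma_{\f{j}}^{(i,m)}$, so that its trace along $\Sigma^{(i)}$ lies in $\US{p}{r+1}$ and its transversal derivative in $\US{p-1}{r}$, while its value and first two derivatives vanish at $\partial\Sigma^{(i)}$; this forces its contributions along every incident edge into the required reduced spaces and makes it $C^2$ at all vertices. Finally, each vertex function $\phiV{i}{\f{j}}$ satisfies the interpolation identity~\eqref{eq:C2interpolation}, hence belongs to $C^2(\f{x}^{(i)})$, and its edge data again lie in the reduced spline spaces by construction.

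For the reverse implication, given $\phi$ satisfying all the conditions, I would recover its expansion in $\PhiB$ by a three-stage peeling. First, the hypothesis $\phi\in C^2(\f{x}^{(i)})$ makes the second-order jet of $\phi$ at each vertex well defined, and by~\eqref{eq:C2interpolation} its six components fix uniquely the coefficients of the six vertex functions $\phiV{i}{\f{j}}$, $\f{j}\in\JChi$; subtracting them produces a function whose second-order jet vanishes at every vertex, without spoiling conditions~\eqref{eq:space}--\eqref{eq:derivative_init2} or the edge regularity (which the vertex functions themselves satisfy). Second, on each edge the remaining trace $f_0^{(i)}\in\US{p}{r+1}$ and derivative $f_1^{(i)}\in\US{p-1}{r}$, now vanishing to second order at the endpoints, are matched by a combination of the edge functions indexed by $\JSigma$; subtracting these leaves a function whose value and gradient vanish on the whole skeleton of edges and vertices. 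Third, such a remainder is on each patch a tensor-product spline in $\US{p}{r}\otimes\US{p}{r}$ with vanishing value and gradient on $\partial\Omega^{(i)}$, hence a combination of the patch interior functions $\phiP{i}{\f{j}}$, $\f{j}\in\JOmega$. This exhibits $\phi\in\myspan\PhiB=\UW$.

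The main obstacle is the bookkeeping in the second stage: one must check that, once the vertex jets have been removed, the reduced trace and derivative data with vanishing second-order endpoint values are spanned \emph{exactly} by the edge generators indexed by the truncated range $j_1=3-j_2,\ldots,n_{j_2}-4+j_2$ of $\JSigma$, with no missing or superfluous degrees of freedom. This is a dimension count reconciling the sizes of $\US{p}{r+1}$ and $\US{p-1}{r}$ with $|\JSigma|$ and the vertex contributions absorbed near each endpoint, and it is precisely where the index ranges of $\JOmega$, $\JSigma$ and $\JChi$ were designed to interlock. Once this compatibility is confirmed, uniqueness of each stage follows from the linear independence of the corresponding generator subset, and the whole decomposition is determined.
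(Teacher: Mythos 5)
Your proposal is correct and is essentially the argument the paper intends: the paper omits the proof, stating only that it is ``a direct consequence of the construction of the basis functions'', and your generator-by-generator verification for the forward implication together with the vertex--edge--interior peeling for the converse is the standard way to make that precise (it is the argument underlying the construction of $\UW$ in \cite{KaSaTa19a}). The second-stage compatibility you flag --- that after removing the vertex jets the residual trace in $\US{p}{r+1}$ and transversal derivative in $\US{p-1}{r}$, vanishing to order two and one respectively at the endpoints, are spanned exactly by the edge generators with indices in $\JSigma$ --- is indeed the only point requiring care, and it holds precisely because the truncated index ranges $j_1=3-j_2,\ldots,n_{j_2}-4+j_2$ exclude exactly the univariate B-splines with nonzero endpoint data, as you anticipate.
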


\subsection{Counterexample of local linear independence} \label{subsec:locallineardependence}

We will briefly demonstrate on the basis of an example, that the $C^1$ basis functions $\PhiB$, and in particular the {\RV vertex basis functions,}
can be locally linearly dependent. {\MK We consider the bilinearly parameterized three-patch domain~$\Omega$ shown in Fig.~\ref{fig:counterexample}~(left), which is hence trivially AS-$G^1$. The three patches are given in the standard form of Section~\ref{subsubsec:standard_paramaterization} with respect to the inner vertex~$\f{x}^{(0)}$, with $\Omega^{(0)}$ parameterized by}
\[
 \f{F}^{(0)}(\xi_1,\xi_2)= (2 (\sqrt{3}+1) \xi_1 + ( \sqrt{3}-3) \xi_2, -2 (\sqrt{3}-1) \xi_1 +   (3 \sqrt{3}+1) \xi_2),
\]
{\MK and with $\Omega^{(1)}$ and $\Omega^{(2)}$ obtained by rotating $\Omega^{(0)}$ by 120 and 240 degrees.}

\begin{figure}
\begin{center}
\begin{tabular}{cc}
  \includegraphics[trim=280mm 100mm 250mm 60mm,width=0.37\textwidth,clip]{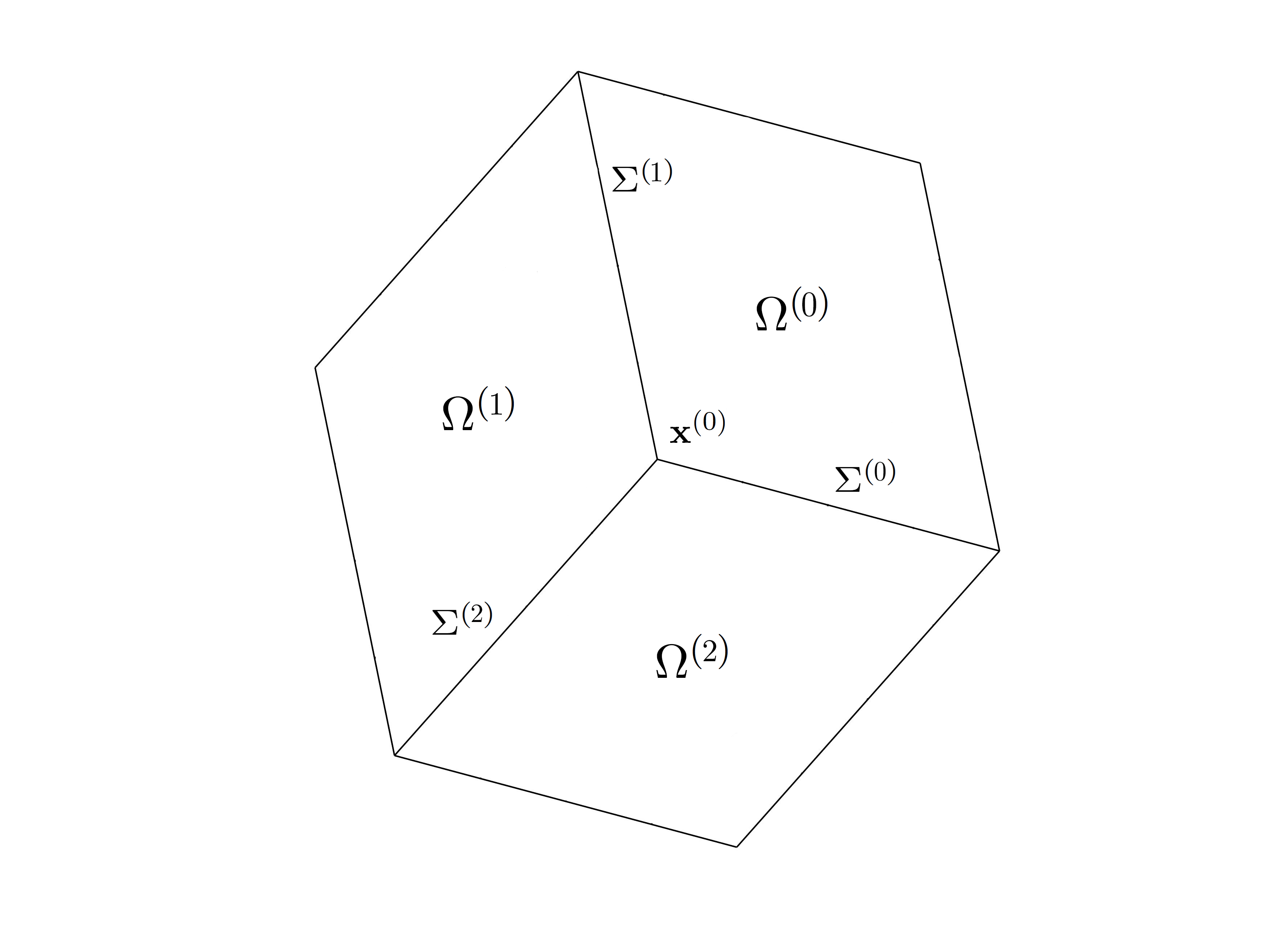} &
  \includegraphics[trim=280mm 100mm 250mm 60mm,width=0.37\textwidth,clip]{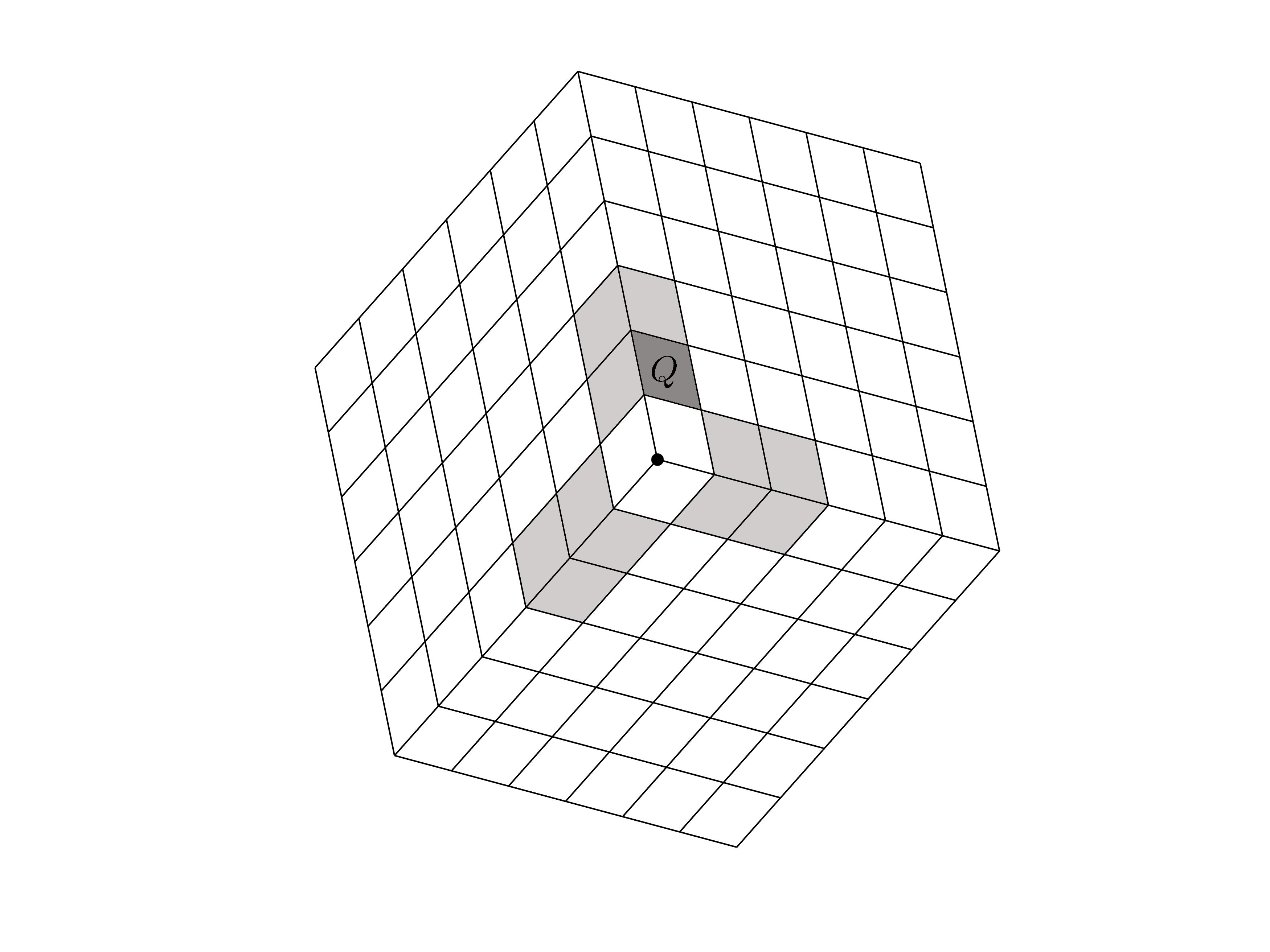} 
 \end{tabular}
 \caption{Left: A bilinearly parameterized three-patch domain~$\Omega$ whose geometry mappings~$\f{F}^{(i)}$, $i \in \{0,1,2 \}$, are given in standard configuration with respect to the inner vertex~$\f{x}^{(0)}$. Right: The associated set~$\GG$ of elements for $k=5$ with a particular element $Q \in \GG$ given by $Q=\f{F}^{(0)}((0,\frac{1}{6})\times (\frac{1}{6},\frac{1}{3}))$ and highlighted in {\RV dark} gray. {\RV The light gray elements are the other ones where the vertex basis functions for the inner vertex are locally linearly dependent.}}
 \label{fig:counterexample}
 \end{center}
\end{figure}

{\RV We compute the basis functions of the $C^1$ space for bicubic splines with $k=5$,} and consider the element~$Q \in \GG$ given by $Q=\f{F}^{(0)}((0,\frac{1}{6})\times (\frac{1}{6},\frac{1}{3}))$ and highlighted in Fig.~\ref{fig:counterexample}~(right). Restricting the basis functions to~$Q$, we can verify that $18$ basis functions are non-vanishing on this element. This directly implies that the basis functions of the space $\UW$ are locally linearly dependent, because the maximum number of linearly independent spline basis functions on one element is $(p+1)^2$, which means at most 16 functions {\RV for the bicubic case}.


In fact, the local linear dependence on this element is caused by the vertex basis functions. To study this in more detail, let us consider the set $\PhiB_{\f{x}^{(0)}} = \left\{ \phiV{0}{\f{j}} \ : \ \f{j} \in \JChi  \right\}$, and restrict the six vertex functions of this set to {\RV the element $Q$.} 
It is easy to verify that each of the six functions is non-vanishing on $Q$, and simplifies there to $\phiV{0}{\f{j}} = \sigma_0^{j_1+j_2} \left(g_{\mathbf{j}}^{(0,0,\rm{next})}  \circ \left(\f{F}^{(0)}\right)^{-1} \right)$. Since all the six functions $g_{\mathbf{j}}^{(0,0,\rm{next})}$, $\f{j} \in \JChi$, are just linear combinations of the five same functions, see \eqref{eq:func_g1}, the corresponding six functions~$\phiV{0}{\f{j}}$ are linearly dependent on $Q$. Thereby, it is interesting to note that the six functions $g_{\mathbf{j}}^{(0,0,\rm{next})}$ can be even represented on $Q$ just as linear combinations of three common functions, namely of the edge functions 
\[
 \fSigma_{(1,0)}^{(i_{1},0)} \circ \left(\f{F}^{(0)}\right)^{-1}, \mbox{ }\fSigma_{(2,0)}^{(i_{1},0)} \circ \left(\f{F}^{(0)}\right)^{-1} \mbox{ and } 
 \fSigma_{(1,1)}^{(i_{1},0)} 
 \circ \left(\f{F}^{(0)} \right)^{-1} .
\]
\begin{remark}
It is possible to ensure local linear independence of the $C^1$ spline {\RV basis} by assuming that the internal degree and regularity satisfy $r < p-3$, {\RV see Proposition~\ref{prop:low_regularity} below}. For instance, quintic functions with $C^1$ regularity ($p=5$, $r=1$) are locally linearly independent. However, the interesting case of the highest allowed regularity, $r = p-2$, is in general locally linearly dependent.
\end{remark}

\subsection{\RV Local and quasi-local linear independence results} \label{sec:lli_one_level}
{\RVV We will now study the (local) linear independence of particular subsets of the basis \eqref{eq:basis_A} of the $C^1$ isogeometric spline space~$\UW$ {\RV restricted to certain regions}. Since the basis functions are (mapped) piecewise polynomials on the mesh $\GG$ {\RV defined in \eqref{eq:mesh}}, {\MK it is sufficient} to {\MK prove}
the local linear independence relations below 
just for any element $Q \in \GG$, instead of for any open domain $\widetilde{\Omega} \subseteq \Omega$.
}

{\RV We start with an auxiliary lemma based on the definition of the set in \eqref{eq:coeff_set}. The proof is not shown, as it is an immediate consequence of the local linear independence of B-splines.}

\begin{lemma} \label{lem:twobasis}
 Let $\overline{\PhiB}, \widehat{\PhiB} \subseteq \Phi$. If both $\overline{\PhiB}$ and $ \widehat{\PhiB}$  are locally linearly independent, and if we further have
 $\coeff(\overline{\PhiB}) \cap \coeff(\widehat{\PhiB}) = \emptyset$,
 then the union of functions $\overline{\PhiB} \cup \widehat{\PhiB}$
 is locally linearly independent.
\end{lemma}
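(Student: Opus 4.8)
The plan is to reduce everything to a single mesh element and then let the local linear independence of the standard (mapped) B-splines do the work, using the disjointness of the index sets only to rule out cancellation between the two families. By the reduction stated just before the lemma (it suffices to test on elements rather than on arbitrary $\widetilde{\Omega}\subseteq\Omega$), I would fix an arbitrary element $Q\in\GG$, let $\Omega^{(k)}$ be the patch with $Q\subset\overline{\Omega^{(k)}}$, and start from a vanishing combination on $Q$,
\[
\sum_{\psi\in\overline{\PhiB}} a_\psi\,\psi\big|_Q + \sum_{\chi\in\widehat{\PhiB}} b_\chi\,\chi\big|_Q = 0,
\]
where both sums range only over functions that do not vanish identically on $Q$. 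The goal is to conclude that every $a_\psi$ and every $b_\chi$ is zero.

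First I would replace each $C^1$ basis function by its B-spline representation $\phi=\sum_{(i,\mathbf{j})}\coefet_{\mathbf{j}}^{(i)}\,\phiP{i}{\mathbf{j}}$ and restrict to $Q$. On $Q$ only the mapped B-splines $N_{\mathbf{j},p}^r\circ(\f{F}^{(k)})^{-1}$ with index $(k,\mathbf{j})$ that are nonzero on $Q$ survive, since all functions restricted to the patch $\Omega^{(k)}$ are expanded in the B-splines of that single patch. The key observation is that the hypothesis $\coeff(\overline{\PhiB})\cap\coeff(\widehat{\PhiB})=\emptyset$, read over patch $k$, says that no index $(k,\mathbf{j})$ is simultaneously active for a function of $\overline{\PhiB}$ and for a function of $\widehat{\PhiB}$; hence the B-splines occurring in the expansion of the first sum are disjoint from those occurring in the second. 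Collecting terms, the displayed identity becomes a single vanishing linear combination of \emph{distinct} mapped B-splines on $Q$.

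Since these mapped B-splines are locally linearly independent, all of their coefficients must vanish, and the disjointness then separates this into the vanishing of the coefficients inherited from $\overline{\PhiB}$ and, independently, of those inherited from $\widehat{\PhiB}$. Reassembled, this says exactly that $\sum_{\psi\in\overline{\PhiB}} a_\psi\,\psi\big|_Q=0$ and $\sum_{\chi\in\widehat{\PhiB}} b_\chi\,\chi\big|_Q=0$ hold separately on $Q$. Invoking the assumed local linear independence of $\overline{\PhiB}$ and of $\widehat{\PhiB}$ on $Q$ then forces all $a_\psi$ and all $b_\chi$ to vanish; since $Q\in\GG$ was arbitrary, $\overline{\PhiB}\cup\widehat{\PhiB}$ is locally linearly independent.

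The only point that needs care, and the genuine content of the argument, is the passage from the \emph{global} disjointness of $\coeff(\overline{\PhiB})$ and $\coeff(\widehat{\PhiB})$ to the \emph{local} disjointness of the B-splines active on $Q$. This is immediate once one notes that every function restricted to $Q$ uses only B-splines indexed over the single patch $\Omega^{(k)}$, so only the part of the index sets lying over patch $k$ is relevant, and disjointness of the full sets certainly implies disjointness of these restrictions. Everything else is a direct application of the local linear independence of B-splines, which is why the statement can be asserted with the proof omitted.
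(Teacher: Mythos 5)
Your proof is correct and coincides with what the paper intends: the paper omits the argument as ``an immediate consequence of the local linear independence of B-splines,'' and your write-up is exactly the fleshing-out of that remark --- reduce to a single element, expand both families in the mapped B-spline basis of the containing patch, use the disjointness of $\coeff(\overline{\PhiB})$ and $\coeff(\widehat{\PhiB})$ to split the resulting B-spline coefficients by family, and then invoke local linear independence first of the B-splines and then of each of $\overline{\PhiB}$ and $\widehat{\PhiB}$ separately. Nothing is missing.
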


{\RV The following lemma generalizes a result for the two patch case from \cite{BrGiKaVa20}.}
\begin{lemma} \label{lem:patch_edge_functions}
The set of patch interior and edge basis functions $\PhiB_{\Omega} \cup \PhiB_{\Sigma}$
is locally linearly independent.
\end{lemma}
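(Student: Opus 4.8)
The plan is to prove local linear independence of $\PhiB_\Omega \cup \PhiB_\Sigma$ by reducing the global statement to a statement on a single element $Q \in \GG$ (as justified in the preamble to this subsection), and then combining two already-established facts via Lemma~\ref{lem:twobasis}: the local linear independence of the patch interior functions and the local linear independence of the edge functions, together with the disjointness of their index sets in the B-spline representation \eqref{eq:coeff_set}. First I would recall the B-spline representations developed in Section~\ref{subsec:standard_representation}: the patch interior functions satisfy $\phivec_{\Omega^{(i)}} = {\bf N}_3^{(i)}$, so each one equals a single standard B-spline of $\US{p}{r}\otimes\US{p}{r}$ with index in $I \times I$; and the edge functions satisfy \eqref{repmultiedge}, so $\phivec_{\Sigma^{(i)}} = E_{i,0}{\bf N}_2^{(i_0)} + E_{i,1}{\bf N}_1^{(i_1)}$, meaning their B-spline support involves only the indices with a zero or one in one of the two coordinates (those nonzero on an edge).

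Next I would verify the two hypotheses of Lemma~\ref{lem:twobasis}. For local linear independence of $\PhiB_\Omega$ on its own, this is immediate since each patch interior function is a distinct mapped B-spline and standard tensor-product B-splines are locally linearly independent on any element. For local linear independence of the edge functions $\PhiB_\Sigma$, I would invoke the two-patch result from \cite{BrGiKaVa20} that the lemma explicitly claims to generalize: on each edge the functions reduce (through $\widetilde E_{i,k}$) to linearly independent combinations of boundary-adjacent B-splines, and functions from distinct edges have disjoint B-spline supports away from the vertices, so their union remains locally linearly independent. The disjointness hypothesis $\coeff(\PhiB_\Omega) \cap \coeff(\PhiB_\Sigma) = \emptyset$ follows by comparing index sets: patch interior functions live on $I \times I = \{2,\dots,n-3\}^2$, while edge functions (after mapping, via ${\bf N}_1^{(i)}$ and ${\bf N}_2^{(i)}$) live on indices with second or first coordinate in $\{0,1\}$; these index families never overlap, so the coefficient sets are disjoint.

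The main obstacle I anticipate is controlling the behavior \emph{near the vertices}, where an element $Q$ touching a vertex sees contributions from several edge functions meeting at that corner, and where the edge functions' B-spline supports could in principle approach the vertex-adjacent indices. I would handle this by noting that, by construction (Section~\ref{sec:edge_functions}), the edge functions in $\PhiB_{\Sigma^{(i)}}$ restricted to $\JSigma$ have value, first, and second derivatives vanishing on $\partial\Sigma^{(i)}$, so their nonzero B-spline coefficients stay away from the vertex-corner indices $\{0,1\}\times\{0,1\}$ captured by ${\bf N}_0^{(i)}$; this is precisely why the restriction to $\JSigma$ (rather than $\widetilde{\bf J}_\Sigma$) was imposed, and it guarantees both the disjointness from the interior indices and the clean separation between edge families. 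Once this index bookkeeping is settled, the conclusion follows by a single application of Lemma~\ref{lem:twobasis}. I would emphasize that no argument involving the vertex functions is needed here, which is essential since the counterexample of Section~\ref{subsec:locallineardependence} shows that including the vertex functions would break local linear independence entirely.
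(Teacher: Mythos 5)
Your proposal is correct and takes essentially the same route as the paper's proof: both establish local linear independence of $\PhiB_\Omega$ (trivially, as mapped B-splines) and of each $\PhiB_{\Sigma^{(i)}}$ (via the two-patch result of \cite{BrGiKaVa20}), and then combine them through Lemma~\ref{lem:twobasis} using the disjointness of the coefficient index sets coming from \eqref{repmultiedge}. The extra bookkeeping you carry out near the vertices is the content the paper leaves implicit in the phrase ``by construction of the edge basis functions,'' so it is a welcome elaboration rather than a deviation.
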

\begin{proof}
The local linear independence of the set of patch interior basis functions $\PhiB_\Omega$ {\RV is trivial, as they correspond to standard B-splines}.
For the set of edge basis functions $\PhiB_\Sigma$,
for each edge the set $\PhiB_{\Sigma^{(i)}}$, $i \in \indSigma$,  is locally linearly independent due to \cite[Proposition~3]{BrGiKaVa20}, where the result was proved for the extended set $\widetilde{\PhiB}_{\Sigma^{(i)}}$.
By construction of the edge basis functions, 
and in particular due to \eqref{repmultiedge}, we have that $ \coeff( \PhiB_{\Sigma^{(i_1)}}) \cap \coeff( \PhiB_{\Sigma^{(i_2)}}) = \emptyset $
for any $i_1,i_2 \in \indOmega$, with $i_1 \neq i_2$, which further implies together with Lemma~\ref{lem:twobasis} that the set of edge basis functions $\PhiB_\Sigma = \bigcup_{i \in \indSigma} \PhiB_{\Sigma^{(i)}}$ is locally linearly independent.

It remains to show that the union of patch interior and edge basis functions $\PhiB_\Omega \cup \PhiB_\Sigma$ 
is locally linearly independent. This is again a direct consequence of Lemma~\ref{lem:twobasis} using on the one hand the fact that each of the two sets 
is locally linearly independent, and on the other hand that, by construction of the individual basis functions, the two sets satisfy the condition $ \coeff\left( \PhiB_{\Omega} \right) \cap \coeff \left( \PhiB_{\Sigma} \right) = \emptyset$,
as can be clearly seen again from \eqref{repmultiedge}.
\end{proof}

For vertex basis functions local linear independence is not true in general, as we have seen in the counterexample of Section~\ref{subsec:locallineardependence}, but we can prove a partial result. To do so, we define for a vertex~$\f{x}^{(i)}$, $i \in \indChi$, the set of elements $Q \in \GG$ adjacent to the vertex~$\f{x}^{(i)}$, see Fig.~\ref{fig:adjacentels}, and denote it by
\[
\GG_{ \f{x}^{(i)}} := \{Q \in \GG \, : \, \f{x}^{(i)} \in \partial Q \}.
\]
{\RVV The next two lemmas show the relations of the different basis functions in this set of elements.}
\begin{figure}
\centerline{\includegraphics[trim=0cm 10cm 0cm 7cm, clip, width=0.55\textwidth]{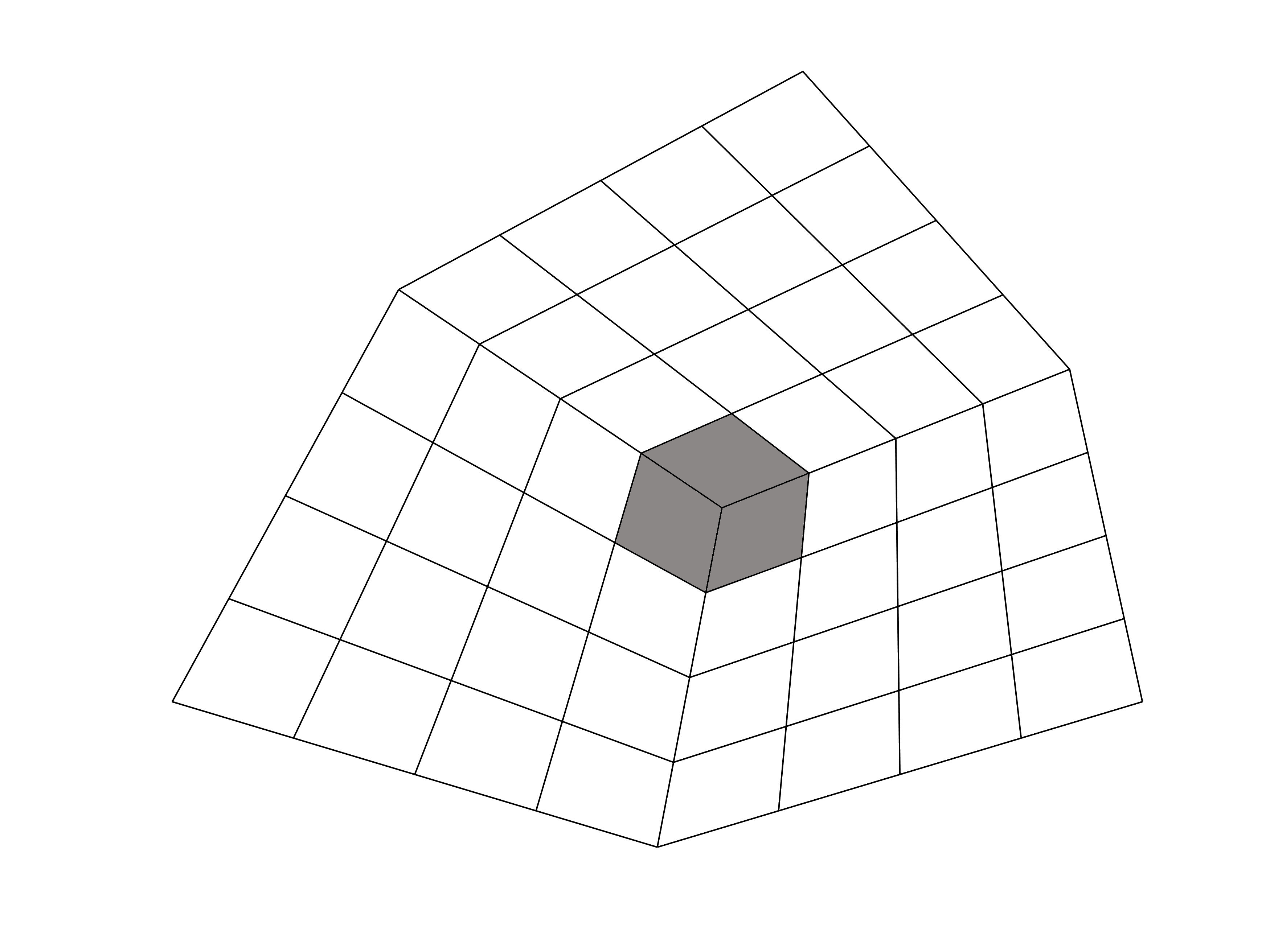}}
\caption{Example of the set of elements $\GG_{ \f{x}^{(i)}}$, highlighted in gray, adjacent to a vertex.}
\label{fig:adjacentels}
\end{figure}
\begin{lemma} \label{lem:vertex}
For any $i \in \indChi$ and for every element $Q \in \GG_{\f{x}^{(i)}}$, the vertex functions $\PhiB_{\f{x}^{(i)}}$ are linearly independent in $Q$.
\end{lemma}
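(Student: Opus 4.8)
The plan is to exploit the $C^2$ interpolation property \eqref{eq:C2interpolation}, which encodes that the order-two Taylor jets of the six vertex functions at $\f{x}^{(i)}$ are linearly independent, and to combine it with the fact that a polynomial vanishing on an element adjacent to the vertex must have vanishing derivatives at that vertex. Fix an element $Q \in \GG_{\f{x}^{(i)}}$. By definition $\f{x}^{(i)} \in \partial Q$, and $Q$ is contained in one of the patches around the vertex, say $\Omega^{(i_m)}$, so that in the standard configuration of Section~\ref{subsubsec:standard_paramaterization} the parametric element $\widehat{Q} := (\f{F}^{(i_m)})^{-1}(Q)$ is a nondegenerate rectangle having $(0,0) = (\f{F}^{(i_m)})^{-1}(\f{x}^{(i)})$ as one of its corners.

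Now suppose that a linear combination $\sum_{\f{j} \in \JChi} c_{\f{j}} \phiV{i}{\f{j}}$ vanishes identically on $Q$. Pulling back through $\f{F}^{(i_m)}$, the function $\sum_{\f{j} \in \JChi} c_{\f{j}} (\phiV{i}{\f{j}} \circ \f{F}^{(i_m)})$ lies in $\US{p}{r} \otimes \US{p}{r}$ and restricts to a single polynomial on the mesh element $\widehat{Q}$; since it vanishes there, it is the zero polynomial on $\widehat{Q}$, and in particular all its partial derivatives up to order two vanish at the corner $(0,0)$. That is, for every $(z_1,z_2)$ with $0 \le z_1, z_2 \le 2$ and $z_1 + z_2 \le 2$ we have $\sum_{\f{j} \in \JChi} c_{\f{j}} \,\Du^{z_1} \Dv^{z_2}(\phiV{i}{\f{j}})(\f{x}^{(i)}) = 0$.

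At this point I would invoke the $C^2$ interpolation condition \eqref{eq:C2interpolation}, namely $\Du^{z_1}\Dv^{z_2}(\phiV{i}{\f{j}})(\f{x}^{(i)}) = \sigma_i^{j_1+j_2}\,\delta_{j_1 z_1}\delta_{j_2 z_2}$. Substituting, each of the six equations collapses to $c_{(z_1,z_2)}\,\sigma_i^{z_1+z_2} = 0$, and since $\sigma_i > 0$ by \eqref{eq:sigma-factor}, every coefficient $c_{\f{j}}$ must vanish. As $Q$ was an arbitrary element of $\GG_{\f{x}^{(i)}}$, the functions $\PhiB_{\f{x}^{(i)}}$ are linearly independent in $Q$, which is the claim.

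The step requiring the most care is the application of \eqref{eq:C2interpolation} with respect to the parameterization $\f{F}^{(i_m)}$ of the patch that actually contains $Q$, rather than with respect to a distinguished reference patch. If the interpolation identity were available only for one patch around the vertex, I would instead work with physical Taylor jets: the $C^2$-smoothness at $\f{x}^{(i)}$ makes the order-two physical jets of the $\phiV{i}{\f{j}}$ well defined, and \eqref{eq:C2interpolation} together with the regularity of the geometry mapping shows that these jets form a basis of the bivariate polynomials of total degree at most two. Composing with the regular mapping $\f{F}^{(i_m)}$, whose Jacobian is invertible at $(0,0)$, preserves linear independence of the jets, so the parametric order-two jets on $\Omega^{(i_m)}$ are again linearly independent and the conclusion follows unchanged. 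In either formulation the essential mechanism is that the order-two jet evaluation at the vertex is injective on $\myspan \PhiB_{\f{x}^{(i)}}$.
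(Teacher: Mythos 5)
Your proposal is correct and follows essentially the same route as the paper: the paper's proof likewise rests on the observation that the six vertex functions satisfy the $C^2$ interpolation condition \eqref{eq:C2interpolation} at $\f{x}^{(i)}$, so that any combination vanishing on an element adjacent to the vertex must have zero coefficients. Your write-up simply makes explicit the intermediate step (vanishing on $Q$ implies vanishing order-two jet at the corner) and the care needed about which parameterization the derivatives refer to, both of which the paper leaves implicit.
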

\begin{proof}
Let $\f{x}^{(i)}$, $i \in \indChi$, be an arbitrary vertex, and let $Q \in \GG_{\f{x}^{(i)}}$ be an arbitrary element adjacent to the vertex~$\f{x}^{(i)}$. By definition, the six vertex basis functions~$\phiV{i}{\f{j}}$, $\f{j} \in \JChi$, do not vanish on $Q$, and {\RV they satisfy} the $C^2$ interpolation condition~\eqref{eq:C2interpolation} at the vertex~$\f{x}^{(i)}$, which yields their linear independence in $Q$.
\end{proof}

\begin{lemma} \label{lem:vertex2}
For any $m \in \indChi$, let us define
\[
\Psi_{\f{x}^{(m)}} = \PhiB \setminus \PhiB_{\f{x}^{(m)}} = \PhiB_{\Omega} \cup 
\PhiB_{\Sigma} \cup 
\left( \PhiB_{\chi} \setminus \PhiB_{\f{x}^{(m)}} \right).
\]
Then, for the set of vertex functions $\PhiB_{\f{x}^{(m)}}$ and for every element $Q \in \GG_{\f{x}^{(m)}}$, it holds that $\myspan(\PhiB_{\f{x}^{(m)}}|_Q) \cap \myspan(\Psi_{\f{x}^{(m)}}|_Q) = 0$.
\end{lemma}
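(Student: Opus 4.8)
The plan is to use the Hermite data at the vertex $\f{x}^{(m)}$ as a functional that separates the two spans. For a function $\phi$ that is a (mapped) polynomial on $Q$ — so smooth up to the corner $\f{x}^{(m)}$ of $Q$ — let $\mathcal{D}(\phi)=(\Du^{z_1}\Dv^{z_2}\phi(\f{x}^{(m)}))_{(z_1,z_2)\in\JChi}\in\R^6$ collect its value and all partial derivatives up to order two at $\f{x}^{(m)}$, computed inside the polynomial piece on $Q$. Every function appearing below is of this form on $Q$, so $\mathcal{D}$ is well defined. I would prove two facts: (i) $\mathcal{D}$ is injective on $\myspan(\PhiB_{\f{x}^{(m)}}\vert_Q)$, and (ii) $\mathcal{D}\equiv 0$ on $\myspan(\Psi_{\f{x}^{(m)}}\vert_Q)$. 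Together these give the claim at once: if $w$ lies in the intersection, then on the one hand $w=\sum_{\f j}c_{\f j}\phiV{m}{\f j}\vert_Q$, and on the other hand $\mathcal{D}(w)=0$ by (ii), so (i) forces all $c_{\f j}=0$ and hence $w\equiv 0$ on $Q$.

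Fact (i) is the quantitative form of Lemma~\ref{lem:vertex}: the $C^2$ interpolation property \eqref{eq:C2interpolation} states precisely that $\mathcal{D}(\phiV{m}{\f j})$ equals $\sigma_m^{j_1+j_2}$ times the $\f j$-th coordinate vector of $\R^6$. Since each $\sigma_m^{j_1+j_2}\neq 0$, the six images form a basis of $\R^6$, so $\mathcal{D}$ restricted to $\myspan\PhiB_{\f{x}^{(m)}}$ is an isomorphism and any nonzero combination has nonzero Hermite data.

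The substance of the proof is fact (ii), that every basis function in $\Psi_{\f{x}^{(m)}}$ has vanishing value, gradient and Hessian at $\f{x}^{(m)}$. The elementary tool I would use is: if a smooth function together with its gradient vanishes along the two transversal mesh edges meeting at $\f{x}^{(m)}$, then all its derivatives up to order two vanish at $\f{x}^{(m)}$ (differentiate the vanishing trace and the vanishing normal derivative tangentially along each edge; this transfers from parametric to physical coordinates since the geometry maps are regular). The three families are then handled as follows. Patch interior functions vanish together with their gradient on the whole patch boundary, in particular on both incident edges, so the tool applies. Edge functions have, by the structure in \eqref{repmultiedge}, zero trace and zero normal derivative along every edge different from their own, so the tool applies at any vertex that is not an endpoint of their edge; at the endpoints of their own edge the required second-order vanishing is exactly the stated property that edge functions vanish on $\partial\Sigma^{(i)}$.

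The delicate family, which I expect to be the main obstacle, is the vertex functions $\phiV{m'}{\f j}$ of other vertices $\f{x}^{(m')}\neq\f{x}^{(m)}$ that are still nonzero on $Q$ (this occurs when $Q$ lies in a patch having both $\f{x}^{(m)}$ and $\f{x}^{(m')}$ as corners, which is unavoidable for coarse patches). If $\f{x}^{(m)}$ is the corner diagonally opposite to $\f{x}^{(m')}$, both incident edges are non-incident to $\f{x}^{(m')}$ and \eqref{repvert} shows that $\phiV{m'}{\f j}$ has zero trace and normal derivative there, so the elementary tool applies directly. If $\f{x}^{(m)}$ is adjacent to $\f{x}^{(m')}$ along a shared edge, the only remaining derivative is the tangential second derivative along that edge, governed by the trace of $\phiV{m'}{\f j}$ there; here I would exploit \eqref{repvert}, which writes $\phiV{m'}{\f j}$ on each patch as a combination of the five extended edge functions of its two incident edges plus the corner B-splines $\mathbf{N}_0^{(i_m)}$, and combine the order-two vanishing of the (extended) edge functions at the far endpoint with the localization of $\mathbf{N}_0^{(i_m)}$ near the corner to conclude that this trace and its derivatives up to order two vanish at $\f{x}^{(m)}$. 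This establishes fact (ii), and the conclusion $\myspan(\PhiB_{\f{x}^{(m)}}\vert_Q)\cap\myspan(\Psi_{\f{x}^{(m)}}\vert_Q)=0$ follows from (i) and (ii) as described above.
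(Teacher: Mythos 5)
Your proposal is correct and follows essentially the same route as the paper: both arguments use the $C^2$ Hermite data at $\f{x}^{(m)}$ as the separating functional, invoking the interpolation condition \eqref{eq:C2interpolation} for injectivity on $\myspan(\PhiB_{\f{x}^{(m)}}|_Q)$ and the vanishing of that data for all functions in $\Psi_{\f{x}^{(m)}}$. The only difference is that the paper dispatches your fact (ii) with the phrase ``by construction,'' whereas you spell out the case analysis (patch interior, edge, and other-vertex functions) in detail.
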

\begin{proof}
Let the vertex $\f{x}^{(m)}$, $m \in \indChi$, and the element $Q \in \GG_{\f{x}^{(m)}}$. Due to Lemma~\ref{lem:vertex}, the six vertex basis functions in $\PhiB_{\f{x}^{(m)}}$ 
are linearly independent in $Q$, which was a direct result of their $C^2$~interpolation condition~\eqref{eq:C2interpolation} at the vertex~$\f{x}^{(m)}$.
Instead all other functions $\phi \in \Psi_{\f{x}^{(m)}}$ satisfy by construction
\begin{equation*}
\Du^{z_1} \Dv^{z_2} \phi  (\f{x}^{(m)}) = 
 0, \mbox{ } 0 \leq z_1,z_2 \leq 2, \mbox{ }z_1+z_2 \leq 2,
\end{equation*} 
which directly implies that the intersection of the spaces spanned by the two sets, restricted to $Q$, only contains the zero function.
\end{proof}

{\RV Finally, we present a result of local linear independence in case of low regularity.
\begin{proposition} \label{prop:low_regularity}
If the degree and regularity satisfy $r < p-3$, the basis $\PhiB$ is locally linearly independent.
\end{proposition}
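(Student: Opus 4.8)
The plan is to prove local linear independence of the full basis $\PhiB = \PhiB_\Omega \cup \PhiB_\Sigma \cup \PhiB_\chi$ by combining the already-established local linear independence of the patch-and-edge functions (Lemma~\ref{lem:patch_edge_functions}) with a new argument showing that, under the low-regularity assumption $r < p-3$, the vertex functions are also locally linearly independent and their coefficient supports stay disjoint from those of the other functions near each vertex. Since every basis function is a mapped piecewise polynomial on the mesh $\GG$, by the reduction noted at the start of Section~\ref{sec:lli_one_level} it suffices to prove linear independence on each single element $Q \in \GG$, rather than on arbitrary subdomains.

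First I would localize: away from the vertices, only patch interior and edge functions are nonzero, and these are already handled by Lemma~\ref{lem:patch_edge_functions}. So the only elements that require new work are those in some $\GG_{\f{x}^{(i)}}$. The heart of the matter is what happens on the elements adjacent to a vertex $\f{x}^{(i)}$. On such an element $Q$, the nonvanishing functions are the six vertex functions $\PhiB_{\f{x}^{(i)}}$, some edge functions from the two incident edges, and some patch interior functions. By Lemma~\ref{lem:vertex} the six vertex functions are linearly independent on $Q$, and by Lemma~\ref{lem:vertex2} their span meets the span of all the remaining functions only in the zero function. Combining these two lemmas already gives linear independence of all nonvanishing functions \emph{on the vertex-adjacent elements themselves}. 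The subtlety that the condition $r < p-3$ addresses is the size of the supports: when the regularity is low enough, the support of each vertex function (five elements per patch, but crucially thinned by the larger number of distinct B-splines available) no longer overlaps a neighboring vertex's support, and more importantly the coefficient-support sets $\coeff(\PhiB_{\f{x}^{(i)}})$ for distinct vertices, and between vertex functions and edge/interior functions, become pairwise disjoint.

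Concretely, I would argue via the B-spline representation in Section~\ref{subsec:standard_representation}. Writing each vertex function through \eqref{repvert} in terms of the subvectors $\mathbf{N}_0^{(i_m)}, \mathbf{N}_1^{(i_m)}, \mathbf{N}_2^{(i_m)}$, one sees that the coefficients $\coeff(\PhiB_{\f{x}^{(i)}})$ involve only B-splines with index within a fixed distance of the corner, the distance being governed by $p - r$ (the multiplicity-controlled spacing of the knots). The inequality $r < p-3$, i.e. $p - r > 3$, is exactly what guarantees that these corner-supported B-splines are separated from the B-splines feeding the edge functions $\PhiB_\Sigma$ (whose indices start at $3 - j_2$ per $\JSigma$) and from those of the neighboring vertices, so that $\coeff(\PhiB_{\f{x}^{(i)}})$ is disjoint from $\coeff(\Psi_{\f{x}^{(i)}})$ and from $\coeff(\PhiB_{\f{x}^{(j)}})$ for $j \neq i$. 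Then one applies Lemma~\ref{lem:twobasis} repeatedly: first to merge the (locally linearly independent) vertex functions with the already-independent set $\PhiB_\Omega \cup \PhiB_\Sigma$, the disjointness of coefficient supports supplying the hypothesis, and then across distinct vertices.

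The main obstacle I anticipate is establishing that $r < p-3$ yields the requisite disjointness of coefficient supports rigorously, rather than merely plausibly. This requires a careful index count in the representations \eqref{repmultiedge} and \eqref{repvert}: one must verify that, for each of the matrices $K_{i,m}$, $K_{i,m+1}$ and $V_{i,m}$, the only B-splines carrying nonzero coefficients are those indexed in a corner block of width at most three in each direction, and that the edge functions' coefficient block (indices $\JSigma$) together with the neighboring vertices' blocks stays clear of it precisely when $p-r \geq 4$. The geometry is complicated by the fact that the vertex functions live across $\nu_i$ patches and the edge functions across two, so the bookkeeping of which B-spline indices are touched on each patch is the delicate part; everything else follows formally from Lemmas~\ref{lem:twobasis}, \ref{lem:vertex}, and \ref{lem:vertex2}. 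I would expect the authors either to carry out this index count or to cite the explicit coefficient formulas in the appendix to close the gap.
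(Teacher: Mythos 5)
Your overall architecture --- reduce to single elements of $\GG$, use Lemma~\ref{lem:patch_edge_functions} on elements where no vertex function is active, and Lemmas~\ref{lem:vertex} and~\ref{lem:vertex2} on the elements of $\GG_{\f{x}^{(i)}}$ --- is the paper's, but you have misplaced where the hypothesis $r<p-3$ enters, and the step you substitute for it does not close the argument. Your opening claim, ``away from the vertices, only patch interior and edge functions are nonzero,'' is precisely the statement that needs $r<p-3$: for $r=p-2$ a vertex function is supported on five elements per patch (Fig.~\ref{fig:support-vertex}), four of which are \emph{not} adjacent to the vertex, and the counterexample of Section~\ref{subsec:locallineardependence} lives exactly on such an element. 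The paper's entire proof is the observation that when $r<p-3$ the interior knot multiplicities ($p-r-1\ge 3$ in $\US{p}{r+1}$) force $\UN{p}{r+1}{2}$, and with it every univariate ingredient of \eqref{eq:func_g1}--\eqref{eq:func_g3}, to be supported in the first knot span, so that each vertex function is supported \emph{only} on the elements of $\GG_{\f{x}^{(i)}}$; after that, the three lemmas apply verbatim and nothing else is needed. Your parenthetical ``five elements per patch, but crucially thinned'' misses that the support collapses to the adjacent elements, which is the whole content of the hypothesis; without this fact your localization step is unjustified, and on the non-adjacent elements of the support neither Lemma~\ref{lem:vertex} nor Lemma~\ref{lem:vertex2} is available, so the local linear independence of $\PhiB_{\f{x}^{(i)}}$ that Lemma~\ref{lem:twobasis} would require as input is exactly what can fail.

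The replacement mechanism you propose --- disjointness of $\coeff(\PhiB_{\f{x}^{(i)}})$ from the coefficient supports of the edge, interior, and other vertex functions, followed by Lemma~\ref{lem:twobasis} --- is neither established (you yourself flag it as the main obstacle) nor necessary: on the vertex-adjacent elements, Lemma~\ref{lem:vertex2} already gives $\myspan(\PhiB_{\f{x}^{(i)}}|_Q)\cap\myspan(\Psi_{\f{x}^{(i)}}|_Q)=0$ through the $C^2$ interpolation conditions \eqref{eq:C2interpolation}, with no disjointness of B-spline indices required. It is also doubtful as stated: the vertex functions are assembled in \eqref{repvert} from the ``extended'' edge functions closest to the vertex, whose expansions in $\US{p}{r}\otimes\US{p}{r}$ interleave on the first knot spans with those of the proper edge functions of indices $(3,0)$ and $(2,1)$, and the lower bound $3-j_2$ in $\JSigma$ is independent of $r$, so $p-r>3$ cannot be what separates the two index sets. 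To salvage your route, replace the coefficient-disjointness claim by the support statement above; once the vertex functions are known to vanish outside $\GG_{\f{x}^{(i)}}$, the three lemmas finish the proof in one line, exactly as in the paper.
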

\begin{proof}
It is sufficient to note that, for regularity $r < p-3$, the vertex basis functions are supported on the elements adjacent to the vertex. The proof then immediately follows from the previous three lemmas.
\end{proof}
}

\section{$C^1$ multi-patch hierarchical spline space} \label{sec:hierarchical-C1}
We now introduce the construction of the hierarchical $C^1$ spline space. We start by introducing the $C^1$ multi-patch spaces for each level, and then we analyze their properties to apply the construction of Section~\ref{sec:hierarchical}. In particular, we have to check the nestedness of the spaces, and that the assumptions of Theorem~\ref{thm:lin-indep-H} are satisfied. As we will see, this will impose some constraints to modify the refinement algorithm.

\subsection{\mbox{$C^1$ multi-patch spaces on each level and hierarchical construction}}
Let us assume that we have a multi-patch domain $\Omega$ with an analysis-suitable $G^1$ parameterization $\mathbf{F}$ as in Section~\ref{sec:multipatch-geometry}, and the $C^1$ space $\UV^0$, with the corresponding subspace $\UW^0$ as in Section~\ref{sec:C1space}. By successively applying dyadic refinement, we construct a sequence of spaces $\UV^\ell$ and their corresponding subspaces $\UW^\ell$, for $\ell = 0, \ldots, N-1$. The associated meshes are denoted by $\GG^\ell$. To apply the construction of hierarchical splines from Section~\ref{sec:hierarchical}, the subspaces $\UW^\ell$ and their bases $\PhiB^\ell$ respectively play the role of $\mathbb{U}^\ell$ and $\Psi^\ell$ in the construction of that section. We further assume that each subdomain $\Omega^\ell$ is the union of elements of the mesh $\GG^{\ell-1}$. We will denote by $\QQ$ the hierarchical mesh, and by $\mathcal{H}_\UW$ the set of hierarchical $C^1$ splines, that we will prove to be a basis.

Note that the parameterizations, the geometric entities and the gluing data in Section~\ref{sec:multipatch-geometry} are independent of the level $\ell$. Instead, the discrete spaces and their corresponding bases and basis functions clearly depend on $\ell$, and we will use the $\ell$ superindex to refer to them. For instance, the basis will be denoted by $\PhiB^\ell$, and the univariate spline spaces will be denoted by $\US{p}{r,\ell}$. Moreover, the set of elements from each level adjacent to a vertex will be denoted by $\GG^\ell_{\mathbf{x}^{(i)}}$.


In the following, we analyze the properties of the subspaces $\UW^\ell$ to apply the construction of the hierarchical space.

\subsection{Nestedness and refinement mask} \label{sec:nestedness}
The first property we need to prove is the nestedness of the subspaces, that is, that $\UW^\ell \subset \UW^{\ell+1}$ for $\ell = 0, \ldots, N-2$. Nestedness is clear for the spaces $\UV^\ell$, while for the subspaces $\UW^\ell$ it relies on the characterization from Proposition~\ref{prop:characterization}.
\begin{proposition}
Let $N \in \mathbb{N}$. The sequence of spaces $\UW^\ell, \ell = 0,1, \ldots, N-1$ is nested, i.e., $\UW^0 \subset \UW^1 \subset \ldots \subset \UW^{N-1}$.
\end{proposition}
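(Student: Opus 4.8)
The plan is to use the explicit characterization of $\UW^\ell$ given in Proposition~\ref{prop:characterization}, reducing the nestedness question to a collection of one-dimensional statements about the trace and derivative data along edges, plus a pointwise $C^2$ condition at the vertices. Since nestedness of the full spaces $\UV^\ell \subset \UV^{\ell+1}$ is immediate (the conditions \eqref{eq:space}, \eqref{eq:trace_init}, \eqref{eq:derivative_init2} that define $\UV$ are independent of the level, and dyadic refinement only enlarges the tensor-product spline spaces $\US{p}{r,\ell} \otimes \US{p}{r,\ell}$), any $\phi \in \UW^\ell \subset \UV^\ell$ automatically lies in $\UV^{\ell+1}$ and satisfies \eqref{eq:space}, \eqref{eq:trace_init} and \eqref{eq:derivative_init2} at level $\ell+1$. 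Thus the only thing to verify is that the \emph{additional} membership conditions for $\UW^{\ell+1}$ are inherited, namely that for each edge the trace $f_0^{(i)} \in \US{p}{r+1,\ell+1}$ and the derivative datum $f_1^{(i)} \in \US{p-1}{r,\ell+1}$, and that $\phi \in C^2(\f{x}^{(i)})$ at every vertex.

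First I would handle the vertex condition, which is the easiest: the requirement $\phi \in C^2(\f{x}^{(i)})$ is a purely pointwise statement at the vertex and does not reference the level-dependent spline spaces at all, so it is trivially preserved when passing from level $\ell$ to level $\ell+1$. Next I would treat the edge conditions. Here the key observation is that dyadic refinement produces nested \emph{univariate} spline spaces: the knot vectors $\UXI{p}{r}$ at consecutive levels are refinements of one another, so $\US{p}{r+1,\ell} \subset \US{p}{r+1,\ell+1}$ and $\US{p-1}{r,\ell} \subset \US{p-1}{r,\ell+1}$. Since $f_0^{(i)}$ and $f_1^{(i)}$ are defined intrinsically as the common trace \eqref{eq:trace_init} and the common directional-derivative datum \eqref{eq:derivative_init2} of $\phi$ along the edge, and these functions do not change when we merely regard $\phi$ as an element of the finer space, the hypothesis $\phi \in \UW^\ell$ gives $f_0^{(i)} \in \US{p}{r+1,\ell}$ and $f_1^{(i)} \in \US{p-1}{r,\ell}$, which by univariate nestedness are contained in $\US{p}{r+1,\ell+1}$ and $\US{p-1}{r,\ell+1}$ respectively.

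Combining these observations, every defining condition of $\UW^{\ell+1}$ in Proposition~\ref{prop:characterization} holds for an arbitrary $\phi \in \UW^\ell$, so $\UW^\ell \subset \UW^{\ell+1}$, and chaining over $\ell = 0, \ldots, N-2$ yields the full chain $\UW^0 \subset \UW^1 \subset \ldots \subset \UW^{N-1}$. The one point that deserves a careful sentence—and which I regard as the main (though still mild) obstacle—is the well-definedness of the edge data: one must confirm that $f_0^{(i)}$ and $f_1^{(i)}$ are genuinely intrinsic to $\phi$ and the geometry $\f{F}$ through the fixed gluing data $\alpha^{(i,k)},\beta^{(i,k)}$ (which are level-independent), so that refining the ambient spline space does not alter these functions but only the spaces in which we test their membership. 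Once this is made explicit, the argument is essentially a bookkeeping verification that each clause of the characterization is level-monotone. I would therefore emphasize that the proof rests entirely on two ingredients: the level-independence of the gluing data and the geometric conditions, and the univariate nestedness of the one-dimensional spline spaces governing the edge traces and derivatives.
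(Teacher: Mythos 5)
Your proof is correct and follows exactly the paper's route: the paper's own proof simply states that nestedness is an immediate consequence of Proposition~\ref{prop:characterization} together with the univariate inclusions $\US{p}{r+1,\ell} \subset \US{p}{r+1,\ell+1}$ and $\US{p-1}{r,\ell} \subset \US{p-1}{r,\ell+1}$, which is precisely the argument you spell out. Your additional remarks on the level-independence of the gluing data and the pointwise nature of the $C^2$ vertex condition are sound elaborations of details the paper leaves implicit.
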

\begin{proof}
The result is an immediate consequence of Proposition~\ref{prop:characterization} and the nestedness of the univariate spline spaces, $\US{p-1}{r,\ell} \subset \US{p-1}{r,\ell+1}$ and $\US{p}{r+1,\ell} \subset \US{p}{r+1,\ell+1}$, for $\ell = 0, \ldots, N-2$.
\end{proof}

Thanks to the nestedness of the subspaces $\UW^\ell$, we can define the {\RV set of truncated hierarchical splines} as described in Section~\ref{sec:hierarchical}, and we will denote it by ${\cal T}_\UW$. The explicit definition of the functions {\RV in ${\cal T}_\UW$} requires to use the coefficients of the two level relations between the $C^1$ basis functions of consecutive levels, also called the refinement mask, that we describe in the following.

Let us denote with an upper index $\ell$ the vectors of standard isogeometric functions \eqref{stdbspl-new} of level $\ell$, {\MK recall also Fig.~\ref{fig:subvectors}}. Then, we have the relation between functions of two consecutive levels
\begin{equation} \label{eq:refmask-bsp}
\begin{bmatrix}
{\bf N}_{0}^{(k),\ell} \\
{\bf N}_{1}^{(k),\ell} \\
{\bf N}_{2}^{(k),\ell} \\
{\bf N}_{3}^{(k),\ell}
\end{bmatrix}
=
\begin{bmatrix}
\Theta^{\ell+1}_{00} & \Theta^{\ell+1}_{01} & \Theta^{\ell+1}_{02} & \Theta^{\ell+1}_{03} \\
0 & \Theta^{\ell+1}_{11} & 0 & \Theta^{\ell+1}_{13} \\
0 & 0 & \Theta^{\ell+1}_{22} & \Theta^{\ell+1}_{23} \\
0 & 0 & 0 & \Theta^{\ell+1}_{33} \\
\end{bmatrix}
\begin{bmatrix}
{\bf N}_{0}^{(k),\ell+1} \\
{\bf N}_{1}^{(k),\ell+1} \\
{\bf N}_{2}^{(k),\ell+1} \\
{\bf N}_{3}^{(k),\ell+1}
\end{bmatrix}
, \text{ for } k \in \indOmega.
\end{equation}
The refinement mask for the patch interior functions, which coincide with $\mathbf{N}^{(k),\ell}_3$, is simply a restriction of the relation \eqref{eq:refmask-bsp} to their corresponding indices.

We recall that the edge functions, and ``extended'' edge functions, of level $\ell$ associated with an edge $\Sigma^{(i)}$, for $i \in \indSigma$, can be expressed in terms of standard isogeometric functions through the matrix $\widetilde{E}^\ell_{i,k}$, as given by \eqref{reptwoedge}. Let us introduce the block diagonal matrix
\[
\widetilde \Lambda^{\ell+1} = 
\begin{bmatrix}
\widetilde\Lambda_p^{r+1,\ell+1} & 0 \\
0 & \frac{1}{2}\widetilde\Lambda_{p-1}^{r,\ell+1}
\end{bmatrix},
\]
where $\widetilde\Lambda^{s,\ell+1}_q$ stands for the refinement matrix for univariate B-splines of level $\ell$ of degree $q$ and regularity $s$. By generalizing the results in \cite{BrGiKaVa20} for the two-patch case, and noting that functions in the subvectors $\mathbf{N}_3$ correspond to patch interior functions, we obtain the following refinement relation for the edge functions: 
\begin{align*} 
\phivec_{\Sigma^{(i)}}^\ell & = \Lambda^{\ell+1} \phivec_{\Sigma^{(i)}}^{\ell+1} + 
E_{i,0}^\ell \Theta_{23}^{\ell+1} \mathbf{N}_3^{(i_0),\ell+1} + 
E_{i,1}^\ell \Theta_{13}^{\ell+1} \mathbf{N}_3^{(i_1),\ell+1} \\
& = \Lambda^{\ell+1} \phivec_{\Sigma^{(i)}}^{\ell+1} + 
E_{i,0}^\ell \Theta_{23}^{\ell+1} \phivec_{\Omega^{(i_0)}}^{\ell+1} + 
E_{i,1}^\ell \Theta_{13}^{\ell+1} \phivec_{\Omega^{(i_1)}}^{\ell+1},
\end{align*}
where $\Lambda^{\ell+1}$ is the restriction of $\widetilde \Lambda^{\ell+1}$ to rows and columns corresponding to functions away from the vertices, and the matrices $E_{i,0}^\ell$ and $E_{i,1}^\ell$ are the matrices in \eqref{repmultiedge} for basis functions of level $\ell$.

For {\MK the} vertex basis functions, {\RV and recalling that the indices ${\bf j}=(j_1,j_2) \in \JChi$ are sorted moving first on the first index, as already explained in Section~\ref{subsec:standard_representation},} let us first introduce the diagonal matrix
\[
D_{{\bf x}^{(i)}}^{\ell+1} = \mathrm{diag}\left(\left[1, \frac{{\sigma_i^\ell}}{{\sigma_i^{\ell+1}}}, \left(\frac{{\sigma_i^\ell}}{{\sigma_i^{\ell+1}}}\right)^{2}, \frac{{\sigma_i^\ell}}{{\sigma_i^{\ell+1}}}, \left(\frac{{\sigma_i^\ell}}{{\sigma_i^{\ell+1}}}\right)^{2}, \left(\frac{{\sigma_i^\ell}}{{\sigma_i^{\ell+1}}}\right)^{2}\right]\right).
\]
Exploiting the expression for vertex functions in terms of standard isogeometric functions~\eqref{repvert} and the refinement mask for extended edge functions from \cite{BrGiKaVa20} and for standard mapped B-splines in \eqref{eq:refmask-bsp}, we obtain that for each vertex function of index ${\bf j} \in \JChi$ associated with the vertex ${\bf x}^{(i)}$, $i \in \indChi$, we have
\begin{align*}
&\phivec_{{\bf x}^{(i)}}^{\ell}= D_{{\bf x}^{(i)}}^{\ell+1} \phivec_{{\bf x}^{(i)}}^{\ell+1} 
+ \sum_{m=0}^{\nu_i-1} K_{i,{m}}^\ell \widehat{\Lambda}^{\ell+1} \phivec_{\Sigma^{(i_m)}}^{\ell+1} 
+ \delta_{b} K_{i,{\nu_i+1}}^\ell \widehat{\Lambda}^{\ell+1} \phivec_{\Sigma^{(i_{\nu_i+1})}}^{\ell+1} \\
&+\sum_{m=0}^{\nu_i-1}
\left(K_{i,{m}}^\ell \widehat{E}_{i_{m},1}^\ell 
\begin{bmatrix}
\Theta_{03}^{\ell+1} \\
\Theta_{13}^{\ell+1}
\end{bmatrix}
+K_{i,{m+1}}^\ell \widehat{E}_{i_{m+1},0}^\ell 
\begin{bmatrix}
\Theta_{03}^{\ell+1} \\
\Theta_{23}^{\ell+1}
\end{bmatrix}
- V_{i,m}^\ell \Theta_{03}^{\ell+1}
\right) \phivec_{\Omega^{(i_m)}}^{\ell+1},
\end{align*}
where $\delta_b$ indicates whether $\mathbf{x}^{(i)}$ is an interior or a boundary vertex, i.e., 
\begin{equation*}
\delta_{b}=\begin{cases}{} 0 & \hbox{if}\, i \in \indChiI, \; \\
1 & \hbox{if}\, i \in \indChiB, \; 
\end{cases}
\end{equation*}
$\widehat{\Lambda}^{\ell+1}$ is the restriction of $\widetilde{\Lambda}^{\ell+1}$ to rows corresponding to the five ``extended'' edge functions close to the vertex and to columns of active edge functions, while all the other matrices have been introduced above. Note that the $\Theta$ matrices appearing in the previous equation are very sparse, and in practice one can restrict the computations to the few coefficients that are nonzero.

\subsection{Condition for linear independence of hierarchical $C^1$ {\RV splines}}
The second property we need to prove is (P1), that guarantees linear independence of the hierarchical {\RV splines, and therefore that they form a basis}. The proof is based on the linear independence results from Section~\ref{sec:lli_one_level}.

\begin{theorem}\label{thm:linear_independence}
Let the spaces $\{\UW^\ell\}_{\ell=0}^{N-1}$, with bases $\PhiB^\ell$, obtained by dyadic refinement, the hierarchical mesh $\mathcal{Q}$, and let the hierarchical $C^1$ spline set $\mathcal{H}_\UW$ be defined as in \eqref{eq:hbasis}, and $\mathcal{T}_\UW$ be defined as in \eqref{eq:thbasis}. If for every active vertex function $\phi \in \PhiB^\ell_{\f{x}^{(i)}} \cap \mathcal{H}_\UW$ there exists an active element {\RVV in $\GG^\ell_{\f{x}^{(i)}} \cap \mathcal{Q}$}, then both the functions in $\mathcal{H}_\UW$ and the truncated functions in $\mathcal{T}_\UW$ are linearly independent.
\end{theorem}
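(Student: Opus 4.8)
The plan is to reduce the statement to a verification of property (P1) for the sequence of bases $\PhiB^\ell$, after which Theorems~\ref{thm:lin-indep-H} and~\ref{thm:lin-indep-T} give at once the linear independence of both $\mathcal{H}_\UW$ and $\mathcal{T}_\UW$. I would therefore fix a level $\ell$ and assume a vanishing linear combination $\sum c_\phi \phi = 0$ on the active region $\Omega^\ell \setminus \Omega^{\ell+1}$, the sum running over those functions of $\PhiB^\ell$ that do not vanish there, and aim to prove that every coefficient is zero. The functions split into three families — patch interior, edge, and vertex functions — and the delicate one is the vertex family, since by the counterexample of Section~\ref{subsec:locallineardependence} the vertex functions need not be locally linearly independent. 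The hypothesis on active elements is exactly what bypasses this difficulty.

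First I would dispose of the vertex coefficients. Let $\f{x}^{(i)}$ be a vertex carrying at least one active vertex function in the sum. By hypothesis there is an active element $Q \in \GG^\ell_{\f{x}^{(i)}} \cap \mathcal{Q}$, and since $Q$ is active at level $\ell$ it lies in $\Omega^\ell \setminus \Omega^{\ell+1}$, so the combination restricts to zero on $Q$. Splitting this restriction into its part in $\myspan(\PhiB^\ell_{\f{x}^{(i)}}|_Q)$ and its part in $\myspan(\Psi^\ell_{\f{x}^{(i)}}|_Q)$, with $\Psi^\ell_{\f{x}^{(i)}} = \PhiB^\ell \setminus \PhiB^\ell_{\f{x}^{(i)}}$ as in Lemma~\ref{lem:vertex2}, the trivial-intersection property of that lemma forces both parts to vanish on $Q$. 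The vanishing of the vertex part, together with the linear independence on $Q$ of the six vertex functions of $\f{x}^{(i)}$ granted by Lemma~\ref{lem:vertex}, yields $c_\phi = 0$ for every active $\phi \in \PhiB^\ell_{\f{x}^{(i)}}$. Running this over all vertices that carry active vertex functions removes every vertex coefficient from the combination.

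What survives is a vanishing combination of patch interior and edge functions on $\Omega^\ell \setminus \Omega^{\ell+1}$. By Lemma~\ref{lem:patch_edge_functions} the set $\PhiB^\ell_{\Omega} \cup \PhiB^\ell_{\Sigma}$ is locally linearly independent, hence in particular linearly independent on the subdomain $\Omega^\ell \setminus \Omega^{\ell+1}$, so all remaining coefficients vanish as well. This establishes (P1) and concludes the argument.

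The main obstacle is entirely contained in the vertex step, and it is conceptual rather than computational: one must convert the \emph{local} statements of Lemmas~\ref{lem:vertex} and~\ref{lem:vertex2}, which hold only on elements touching a given vertex, into a conclusion valid on the whole active region. The hypothesis that each active vertex function is accompanied by an active neighbouring element at its own level is precisely the bridge that makes this possible. A point worth checking carefully is that several vertices may share the element $Q$; however, Lemma~\ref{lem:vertex2} separates one vertex at a time from everything else, including the vertex functions of the other vertices adjacent to $Q$, so the argument applies to each active vertex independently and no interference arises.
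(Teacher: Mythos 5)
Your proposal is correct and takes essentially the same route as the paper's proof: both reduce the statement to verifying property (P1) on $\Omega^\ell\setminus\Omega^{\ell+1}$, eliminate the vertex coefficients by restricting to the active element guaranteed by the hypothesis and invoking Lemmas~\ref{lem:vertex} and~\ref{lem:vertex2}, and then dispose of the patch-interior and edge coefficients via Lemma~\ref{lem:patch_edge_functions}. Your explicit remark that Lemma~\ref{lem:vertex2} isolates one vertex at a time, so that several vertices sharing the element $Q$ cause no interference, is a useful clarification that the paper leaves implicit.
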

\begin{proof}
{\RVV
We have to prove property (P1), i.e., that for every level $\ell$ the functions in $\Phi^\ell|_{\Omega^\ell \setminus \Omega^{\ell+1}}$ are linearly independent. {\RVV Let us denote $D^\ell = \Omega^\ell \setminus \Omega^{\ell+1}$, to alleviate notation.} Using the decomposition \eqref{eq:basis_A}, this is equivalent to prove that 
\[
\sum_{i \in \indOmega} \sum_{\phi \in \PhiB^\ell_{\Omega^{(i)}}|_{D^\ell}} c_\phi \phi + 
\sum_{i \in \indSigma} \sum_{\phi \in \PhiB^\ell_{\Sigma^{(i)}}|_{D^\ell}} c_\phi \phi + 
\sum_{i \in \indChi} \sum_{\phi \in \PhiB^\ell_{\mathbf{x}^{(i)}}|_{D^\ell}} c_\phi \phi = 0 
\]
implies that all the coefficients $c_\phi$ are equal to zero.

By hypothesis, for any active vertex function 
{\RVV $\phi \in \PhiB^\ell_{\mathbf{x}^{(i)}}|_{D^\ell}$}, there exists an active element $Q \in \GG^\ell_{\f{x}^{(i)}} \cap \mathcal{Q}$ and which is obviously contained in 
{\RVV $D^\ell$.} By taking the restriction to $Q$, Lemmas~\ref{lem:vertex} and~\ref{lem:vertex2} imply that the corresponding coefficient $c_\phi$ is equal to zero. Since the argument is valid for any vertex, all the coefficients in the third term of the sum must be zero. Finally, the coefficients for the first and second term are zero by Lemma~\ref{lem:patch_edge_functions}, and the result follows.
}
\end{proof}


\subsection{Refinement algorithm}
Theorem~\ref{thm:linear_independence} gives us the only requirement for the linear independence of the hierarchical $C^1$ spline functions: {\RV the support of an active vertex function of level $\ell$ must contain an active element of the same level and adjacent to the vertex.}
We now present a refinement algorithm that guarantees that this property is always satisfied.

Let us first {\RV denote the set of elements in the hierarchical mesh adjacent to any vertex as $\GG_{\chi}$.}
Then, for each element adjacent to a vertex, $Q \in \GG_\chi \cap \GG^\ell_{\f{x}^{(i)}}$ for some $i \in \indChi$ and $\ell \in \{0, \ldots, N-1\}$, and such that $Q \subset \Omega^{(i_m)}$ for some $m = 0, \ldots \nu_i-1$, we define the vertex-patch neighborhood
\[
\mathcal{N}_{\chi}(Q) = \{ Q' \in \GG^\ell \cap \QQ : Q' \subset \Omega^{(i_m)} \cap \supp \phi, \text{ for } \phi \in \PhiB_{\f{x}^{(i)}}^\ell \} \setminus Q,
\]
formed by the elements of level $\ell$ contained in {\RV the patch} $\Omega^{(i_m)}$ and in the support of vertex functions of level $\ell$ associated to ${\f{x}^{(i)}}$.
The marking algorithm proposed in Algorithm~\ref{alg:hrefine-nonadmissible} enforces that whenever an element adjacent to a vertex is marked for refinement, the elements in its vertex-patch neighborhood are also marked, see also Fig.~\ref{fig:algo1}.

\begin{algorithm}[!ht]
\caption{\texttt{MARK\_VERTEX-PATCH} ($\QQ, \mathcal{M}$)}
\label{alg:hrefine-nonadmissible}
\begin{algorithmic}
\State \textbf{Input: }hierarchical mesh $\QQ$, marked elements $\mathcal{M} \subseteq \QQ$
\State set $\displaystyle \mathcal{V} = \bigcup_{Q \in \mathcal{M}\cap \GG_\chi} \mathcal{N}_\chi(Q) \setminus{\mathcal{M}}$
\State set $\mathcal{M} = \mathcal{M}\cup \mathcal{V}$
\State \textbf{Output: } updated set of marked elements $\mathcal{M}$
\end{algorithmic}
\end{algorithm}

\begin{figure}[!t]
\begin{subfigure}[Mark element $Q$ adjacent to a vertex.]{
\includegraphics[trim=0cm 5cm 0cm 3cm, clip, width=0.46\textwidth]{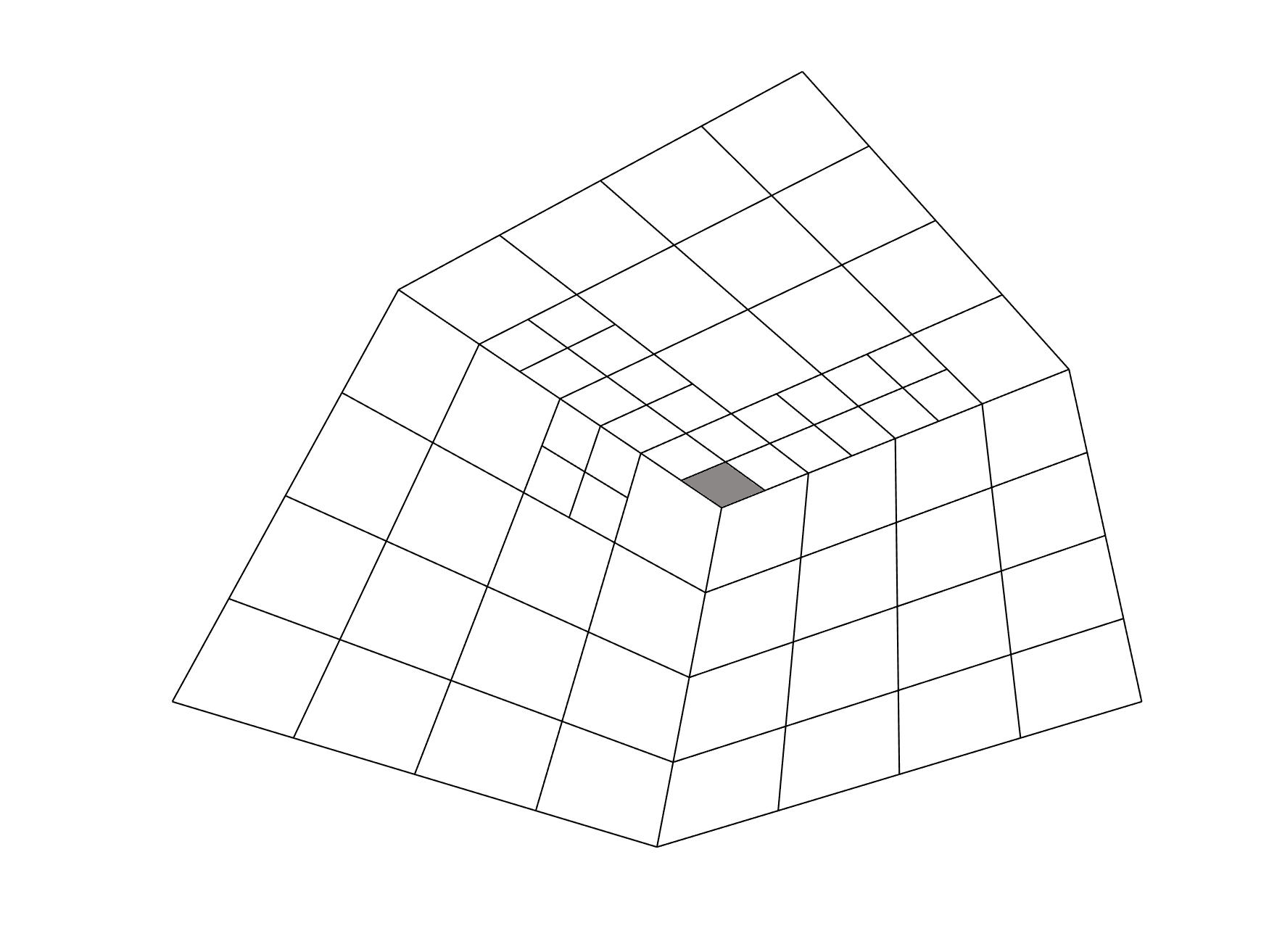}
}
\end{subfigure}
\begin{subfigure}[Refine elements in $\mathcal{N}_{\chi}(Q)$.]{
\includegraphics[trim=0cm 5cm 0cm 3cm, clip, width=0.46\textwidth]{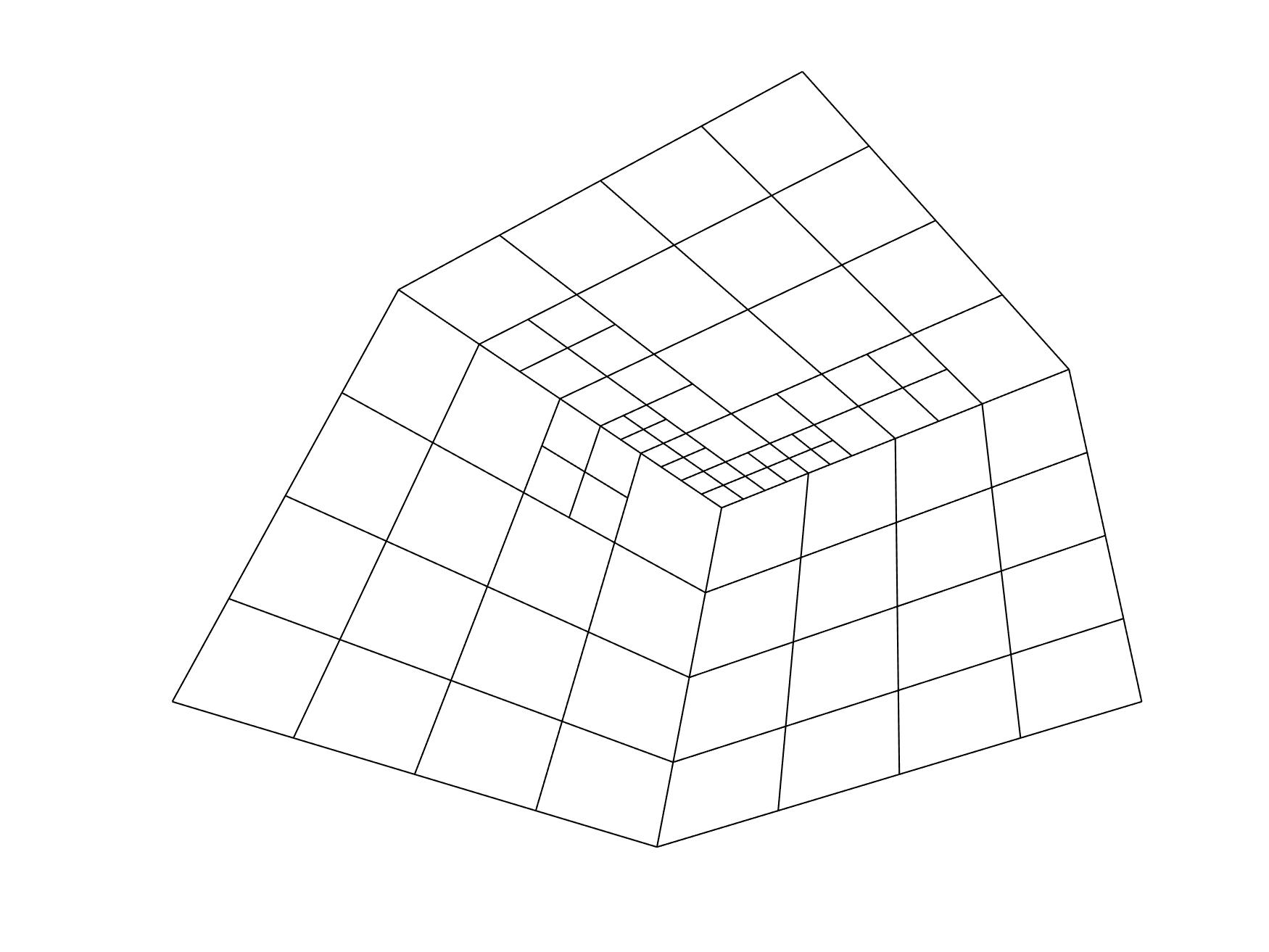}
}
\end{subfigure}
\begin{subfigure}[Mark element $Q$ not adjacent to any vertex.]{
\includegraphics[trim=0cm 5cm 0cm 3cm, clip, width=0.47\textwidth]{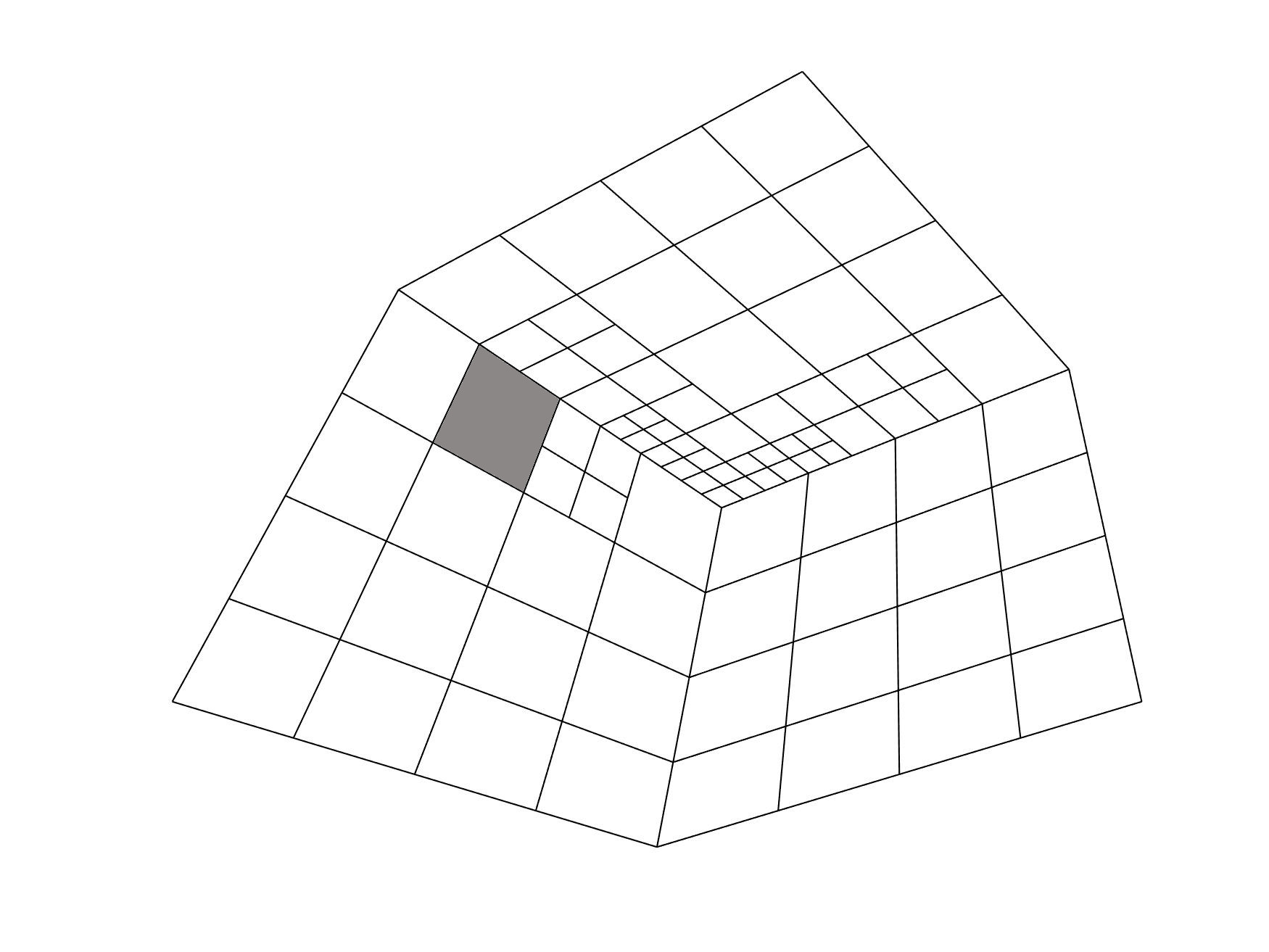}
}
\end{subfigure}
\begin{subfigure}[Refine $Q$ without additional refinement.]{
\includegraphics[trim=0cm 5cm 0cm 3cm, clip, width=0.47\textwidth]{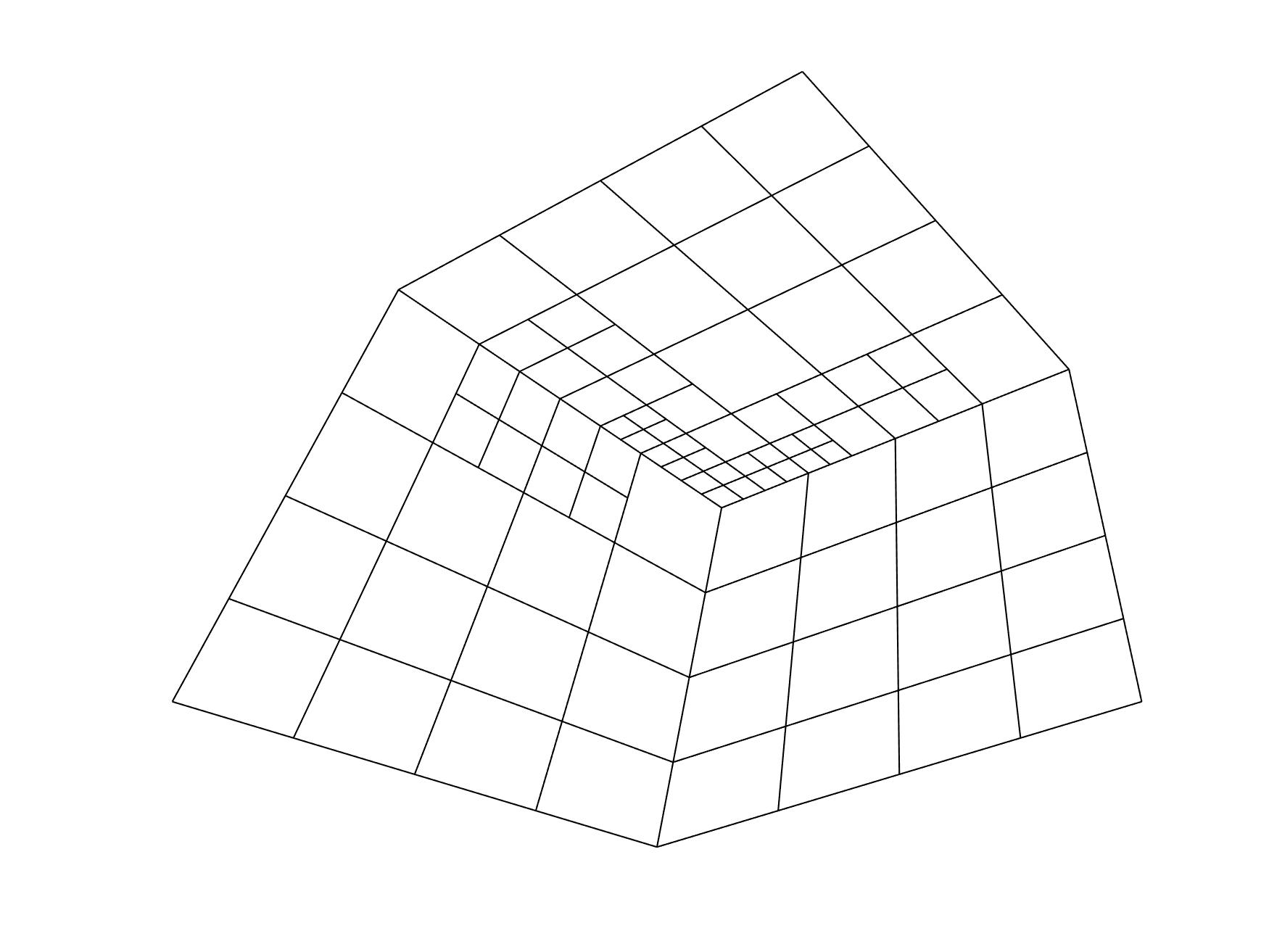}
}
\end{subfigure}
\caption{When one element $Q$ adjacent to a vertex is marked, highlighted in dark gray in (a), all the elements in $\mathcal{N}_{\chi}(Q)$ are also refined (b). If the marked element $Q$ is not adjacent to any vertex (c), no other elements need to be refined (d), even if $Q$ belongs to the support of vertex basis functions.}
\label{fig:algo1}
\end{figure}

\begin{remark}
If the initial mesh is very coarse one should also check whether marking the vertex-patch $\mathcal{N}_\chi(Q)$ marks any element adjacent to another vertex. This would force to mark also the vertex-patch neighborhood of that element, finally propagating the marking to all the boundary elements of the patch. To avoid this check, and to simplify the algorithm, we assume that the coarsest mesh is not coarser than $4 \times 4$ elements per patch, that is $k \ge 3$ inner knots, which prevents this situation to happen.
\end{remark}

The modification in the algorithm to mark the vertex-patch neighborhood can be easily combined with admissible refinement as introduced in \cite{buffa2016c}, allowing to construct admissible hierarchical meshes such that the constraint on Theorem~\ref{thm:linear_independence} is also satisfied. We recall that a mesh is admissible of class $\mu > 1$, if for any element of the hierarchical mesh the non-vanishing functions belong to at most $\mu$ different levels \cite{buffa2016c}. {\RV We will use the terms ${\cal H}$-admissibility and ${\cal T}$-admissibility depending on whether we look for admissibility with hierarchical splines or with truncated hierarchical splines, and} we respectively define, for an element $Q \in \QQ \cap G^\ell$ of level $\ell \ge 0$, its $\mathcal{H}$-neighborhood and $\mathcal{T}$-neighborhood as
\begin{align*}
\mathcal{N}_{\mathcal{H}}(Q, \mu) = \{ Q' \in \QQ \cap G^{\ell-\mu+1} : Q' \cap S_{\mathrm{ext}}(Q,\ell-\mu+1) \not = \emptyset \}, \\
\mathcal{N}_{\mathcal{T}}(Q, \mu) = \{ Q' \in \QQ \cap G^{\ell-\mu+1} : Q' \cap S_{\mathrm{ext}}(Q,\ell-\mu+2) \not = \emptyset \},
\end{align*}
for $\ell - \mu + 1 >0$, and $\mathcal{N}_{\mathcal{H}}(Q, \mu) = \mathcal{N}_{\mathcal{T}}(Q, \mu) = \emptyset$ if $\ell - \mu + 1 < 0$, see \cite[Section~4.1]{reviewadaptiveiga}. In the definition, the multi-level support extension $S_{\mathrm{ext}}(Q, k)$ is defined as the union of the supports of all basis functions of level $k$ that do not vanish on $Q$, namely
\[
S_{\mathrm{ext}}(Q, k) = \bigcup \{ \supp \phi : \phi \in \Phi^k \wedge Q \cap \supp \phi \not = \emptyset\}.
\]
Using a generic notation $\mathcal{N}(Q, \mu)$ for both neighborhoods, and with the convention that $\mathcal{N}(Q, 0) = \emptyset$ for non-admissible meshes, the refinement algorithm in Algorithm~\ref{alg:hrefine-admissible} guarantees both the linear independence of the {\RV $C^1$ hierarchical splines} and the admissibility of the adaptive mesh.

\begin{algorithm}[!ht]
\caption{\texttt{REFINE}$(\QQ,\mathcal{M},\mu)$, refine guaranteeing linear independence and admissibility}
\label{alg:hrefine-admissible}
\begin{algorithmic}
\State \textbf{Input: } admissible hierarchical mesh $\QQ$, marked elements $\mathcal{M} \subseteq \QQ$, admissible class $\mu$
\Repeat
\State set $\mathcal{M} = $ \texttt{MARK\_VERTEX-PATCH} $(\QQ, \mathcal{M})$
\State set $\displaystyle \mathcal{U} = \bigcup_{Q \in \mathcal{M}} \mathcal{N}(Q,\mu) \setminus \mathcal{M}$
\State set $\mathcal{M} = \mathcal{M}\cup \mathcal{U}$
\Until $\mathcal{U} = \emptyset$
\State update $\QQ$ by replacing the elements in $\mathcal{M}$ by their children
\State \textbf{Output: } refined admissible mesh $\QQ$
\end{algorithmic}
\end{algorithm}

For any marked element the recursive refinement of its neighborhood guarantees the admissibility of the refined mesh as in \cite{buffa2016c}. If the marked element is adjacent to a vertex the marking of any other element in its vertex-patch neighborhood guarantees to satisfy the hypotheses of Theorem~\ref{thm:linear_independence} and, consequently, the linear independence of the (truncated) hierarchical functions.

\begin{theorem} \label{thm:algorithm}
Assuming that we start from a coarse mesh $\GG^0$ with at least $4 \times 4$ elements per patch, and applying successive refinement with Algorithm~\ref{alg:hrefine-admissible}, the construction of hierarchical $C^1$ splines gives a set $\mathcal{H}_\UW$ or $\mathcal{T}_\UW$ of linearly independent functions. Moreover, if $\mu > 1$ the mesh constructed in Algorithm~\ref{alg:hrefine-admissible} is admissible.
\end{theorem}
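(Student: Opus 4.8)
The plan is to treat the statement as two independent claims: the linear independence of $\mathcal{H}_\UW$ (and hence of $\mathcal{T}_\UW$), and the $\mu$-admissibility of the mesh. For the first claim I would reduce everything to verifying the single hypothesis of Theorem~\ref{thm:linear_independence}, namely that every active vertex function of level $\ell$ associated with $\f{x}^{(i)}$ has an active element of $\GG^\ell_{\f{x}^{(i)}} \cap \QQ$; for the second I would invoke the closure mechanism underlying the admissibility theory of \cite{buffa2016c}. Both parts are handled by induction over the refinement steps, with the uniform mesh $\GG^0$ as base case.

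For linear independence I would first isolate the geometric invariant enforced by the call to \texttt{MARK\_VERTEX-PATCH} inside Algorithm~\ref{alg:hrefine-admissible}. Since the vertex-patch neighborhood $\mathcal{N}_\chi(Q)$ of a corner element $Q \in \GG^\ell_{\f{x}^{(i)}}$ collects all level-$\ell$ elements of the same patch lying in the support of the vertex functions, the algorithm never refines an element adjacent to a vertex without simultaneously refining every same-level support element of that patch. I would then prove inductively the invariant: for each vertex $\f{x}^{(i)}$ and each adjacent patch $\Omega^{(i_m)}$, the element adjacent to the vertex is active at the coarsest level among the active elements of $\Omega^{(i_m)}$ contained in the support of the vertex functions. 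The inductive step only has to check that refining a corner element (which drags along its same-level companions) and refining a non-corner support element (which triggers no vertex marking, since it is not adjacent to the vertex) both preserve it, while the $4\times 4$ hypothesis keeps the vertex-patch neighborhoods of distinct vertices disjoint so the local arguments do not interfere. With the invariant in hand the hypothesis of Theorem~\ref{thm:linear_independence} follows: if $\phi \in \PhiB^\ell_{\f{x}^{(i)}} \cap \mathcal{H}_\UW$ is active then $\supp\phi \subseteq \Omega^\ell$ while $\supp\phi \not\subseteq \Omega^{\ell+1}$, so $\supp\phi$ contains an element active at level $\ell$, lying in the vertex-function support of some patch $\Omega^{(i_m)}$; the invariant forces the element adjacent to $\f{x}^{(i)}$ in $\Omega^{(i_m)}$ to be active at a level $\le \ell$, and $\supp\phi \subseteq \Omega^\ell$ forces it to be of level $\ge \ell$, hence it is an active element of level exactly $\ell$ in $\GG^\ell_{\f{x}^{(i)}} \cap \QQ$. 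Theorem~\ref{thm:linear_independence} then gives the independence of both $\mathcal{H}_\UW$ and $\mathcal{T}_\UW$.

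For the admissibility when $\mu > 1$, I would show that one application of Algorithm~\ref{alg:hrefine-admissible} maps an admissible mesh to an admissible mesh. The key observation is that the \texttt{repeat} loop alternates the two markings and, since the marked set $\mathcal{M}$ grows monotonically inside a finite mesh, terminates precisely when $\mathcal{U} = \emptyset$, i.e. when $\mathcal{M}$ is closed under the neighborhood operator, $\mathcal{N}(Q,\mu) \subseteq \mathcal{M}$ for all $Q \in \mathcal{M}$. This closure holds for the full marked set, including the elements introduced by \texttt{MARK\_VERTEX-PATCH}, because that step is executed last before the termination test. Refining a set with this closure property preserves $\mu$-admissibility by \cite{buffa2016c}, which closes the induction.

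The main obstacle I anticipate is the bookkeeping in the linear independence part: setting up the correct invariant and verifying that the per-patch, same-level marking of \texttt{MARK\_VERTEX-PATCH} really prevents the corner element from ever becoming strictly finer than a support element across an arbitrary sequence of admissible refinements, in particular in the multilevel situation where the support elements of a single patch sit at several different levels. The admissibility claim, by contrast, should follow almost directly once the loop is recognized as returning a set closed under $\mathcal{N}(\cdot,\mu)$.
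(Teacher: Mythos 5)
Your plan follows the same route as the paper, which in fact states Theorem~\ref{thm:algorithm} without a formal proof and relies only on the two sentences preceding it: the vertex-patch marking enforces the hypothesis of Theorem~\ref{thm:linear_independence}, and the closure of the marked set under $\mathcal{N}(\cdot,\mu)$ at termination of the \texttt{repeat} loop gives admissibility as in \cite{buffa2016c}. Your admissibility half is therefore fine as is (modulo the small observation, covered by the $4\times4$ assumption, that the elements added by the last call to \texttt{MARK\_VERTEX-PATCH} are not themselves adjacent to a vertex, so the final marked set is closed under both operators). The one point that needs tightening --- and which you correctly flag as the anticipated obstacle --- is the formulation of your invariant. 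As written, ``the corner element is active at the coarsest level among the active elements of $\Omega^{(i_m)}$ contained in the support of the vertex functions'' is ambiguous about which level's vertex functions are meant, and the two single-level readings both fail: if the support is taken at the corner element's current level $\ell_c$, an element active at level $\ell<\ell_c$ that lies in the level-$\ell$ support but outside the smaller level-$\ell_c$ support escapes the invariant, and this is exactly the configuration you must exclude; if the support is taken at level $0$, the invariant is simply false after a few refinements, since elements in the level-$0$ support but outside the finer supports may legitimately remain coarse relative to the corner. The statement that is both preserved by Algorithm~\ref{alg:hrefine-admissible} and sufficient is the per-level one: for every patch $\Omega^{(i_m)}$ adjacent to $\f{x}^{(i)}$ and every $\ell$ strictly smaller than the level of the current corner element, the intersection of $\Omega^{(i_m)}$ with the support of the \emph{level-$\ell$} vertex functions is contained in $\Omega^{\ell+1}$. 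Indeed, at the step in which the level-$\ell$ corner element is refined, the induction hypothesis (at level $\ell-1$, with the uniform mesh as base case) guarantees that all active elements of that region have level at least $\ell$; those of level exactly $\ell$ are precisely the ones collected by $\mathcal{N}_\chi$ and refined together with the corner, and the property persists because elements are never coarsened. With this version your final deduction closes verbatim: an active vertex function of level $\ell$ yields a level-$\ell$ active element inside the level-$\ell$ support of some patch, forcing that patch's corner element to have level at most $\ell$, while $\supp\phi\subseteq\Omega^\ell$ forces level at least $\ell$.
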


\subsection{Linear complexity of the refinement algorithm}
We now prove a complexity estimate for the refinement algorithm, in the spirit of \cite{bdd04,stevenson07}, {\RV extending the analysis presented in \cite{bgmp16} for the single-patch case to the multi-patch case}. {\RVV The resulting complexity estimate depends on the valence of the vertices, but not on the particular parameterization of the geometry.}



{\RV We start defining a distance between elements of the mesh, and analyzing its properties. To do so, for any element $Q \in G^\ell$ we set $\Pi^0(Q) = \overline{Q}$, from which we recursively define the regions of elements around $Q$ in the mesh of level $\ell$ as
\begin{align*}
& \Pi^{s+1}(Q) = \bigcup \{ \overline{Q'} : Q' \in G^\ell \, , \, \overline{Q'} \cap \Pi^s(Q) \not = \emptyset \}, \text{ for } s = 0, 1, \ldots
\end{align*}
We note that the number of elements of level $\ell$ contained in $\Pi^{s}(Q)$ is bounded, with a bound that depends on $s$ and on $\nu = \max_{i \in \indChi} \nu_i$, the maximum valence of the vertices of the geometry. 
The distance is first defined for two elements of the same level, $Q, Q' \in G^\ell$, as
\[
\dist(Q, Q') = 2^{-\ell} s, \text{ with } s = \min\{r : Q' \subset \Pi^r(Q) \},
\]
and it is worth to note that this characterizes $\Pi^s(Q)$ as the region occupied by elements $Q' \in G^\ell$ such that $\dist(Q,Q') \le 2^{-\ell} s$. Then, for elements of different levels $Q \in G^\ell, Q' \in G^{\ell'}$, assuming without loss of generality that $\ell > \ell'$, we define their distance as the maximum distance between $Q$ and the descendants of $Q'$ of level $\ell$, namely
\begin{equation} \label{eq:distance_multilevel}
\dist(Q, Q') = \max_{Q'_d \in G^\ell, Q'_d \subset Q'} \dist (Q, Q'_d).
\end{equation}
It is then a simple exercise to prove that the distance satisfies a triangular inequality, in the sense that for any levels $\ell, \ell'$ and $\ell''$, it holds that
\begin{equation*}
\dist(Q,Q') \le \dist(Q,Q'') + \dist(Q'', Q'), \text{ for any } Q \in G^\ell ,Q' \in G^{\ell'}, Q'' \in G^{\ell''},
\end{equation*}
{\RV see Appendix~\ref{app:distance} for the proof.} Moreover, from these definitions and the fact that we refine dyadically, given two elements $Q, Q' \in G^\ell$ at distance $\dist(Q,Q') = s 2^{-\ell}$, for any two descendants $Q_d, Q_d' \in G^{\ell+k}$, with $Q_d \subset Q$, $Q_d' \subset Q'$ and $k>0$, their distance is bounded by
\begin{equation} \label{eq:dist_descendants}
\dist(Q_d, Q_d') \le (2^k(s+1)-1) 2^{-(\ell+k)} \le 2^{-\ell}(s+1).
\end{equation}

Finally, we also note that the distance of two elements $Q, Q' \in G^\ell$ contained in the support of a basis function,  $Q, Q' \subset \supp \phi$ for $\phi \in \Phi^\ell$, satisfies
\begin{equation} \label{eq:Csupp}
\dist(Q, Q') \le 2^{-\ell} C_{\supp},
\end{equation}
where the constant $C_{\supp}$ depends on the degree $p$ and the regularity $r$. It reaches its maximum value in the case of maximum regularity $r=p-2$, from which we obtain $C_{\supp} = \max\{p,5\}$, coming from the support of edge and vertex basis functions as depicted in Fig.~\ref{fig:basis_supp}.

Next, we bound the distance of an element to other elements on its neighborhood. 
Given an element adjacent to a vertex, $Q \in \GG^\ell$ and $Q' \in \mathcal{N}_\chi(\QQ,Q)$, using the support of vertex functions it holds that
\[
\dist(Q, Q') \le 2^{-\ell} C_{\mathcal{N}_\chi},
\]
where $C_{\mathcal{N}_\chi}$ depends on the regularity $r$, the worst case being $C_{\mathcal{N}_\chi} = 2$ for $r=p-2$. 
{\RVV Moreover, given an admissible hierarchical mesh $\QQ$, an element $Q \in \QQ \cap \GG^\ell$ and $Q' \in \mathcal{N}_{\mathcal{H}}(Q, \mu)$ if $\QQ$ is ${\cal H}$-admissible, or $Q' \in \mathcal{N}_{\mathcal{T}}(Q, \mu)$ if $\QQ$ is ${\cal T}$-admissible, it respectively holds that 
\begin{align*}
\dist(Q, Q') \le 2^{-\ell} C_{\mathcal{N_H}}, \text{ with } C_{\mathcal{N_H}} = 2^{\mu - 1} (C_{\supp} + 1) - 1, \\
\dist(Q, Q') \le 2^{-\ell} C_{\mathcal{N_T}}, \text{ with } C_{\mathcal{N_T}} = 2^{\mu - 2} (C_{\supp} + 2) - 1,
\end{align*}
where we have used the definitions of $\mathcal{N}_{\mathcal{H}}(Q,\mu)$ and $S_{\mathrm{ext}}(Q,\ell-\mu+1)$, respectively $\mathcal{N}_{\mathcal{T}}(Q,\mu)$ and $S_{\mathrm{ext}}(Q,\ell-\mu+2)$, the bound \eqref{eq:Csupp} and the first bound in \eqref{eq:dist_descendants}. Note that it always holds that $ C_{\mathcal{N_T}} < C_{\mathcal{N_H}}$.}}



The following lemma adapts the result of \cite[Lemma~12]{bgmp16} to the  hierarchical multi-patch configuration here considered.
\begin{lemma} \label{lem:complexity_auxiliary}
Let $\QQ$ be an admissible mesh of class $\mu \ge 2$ satisfying the assumptions on Theorem~\ref{thm:linear_independence}, $Q' \in \QQ$, and $\QQ^* = \mathtt{REFINE}(\QQ, \{Q'\}, \mu)$ the mesh given by Algorithm~\ref{alg:hrefine-admissible} when marking only $Q'$. Then, for any element $Q \in \QQ^* \setminus \QQ$  it holds that
\[
\dist(Q,Q') \le {\RV 2^{-\ell(Q)+1}} C_{\dist}, \, \text{ with } {\RV C_{\dist} = \frac{C_{\mathcal{N}_\chi} + C_{\mathcal{N}}}{1-2^{1-\mu}}},
\]
{\RV where $C_{\mathcal{N}} = C_{\mathcal{N_T}}$ and $C_{\mathcal{N}} = C_{\mathcal{N_H}}$ for $\mathcal{T}$- and $\mathcal{H}$-admissible meshes, respectively.}
\end{lemma}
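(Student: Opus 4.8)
The plan is to trace, for any refined element $Q \in \QQ^* \setminus \QQ$, the sequence of markings produced by Algorithm~\ref{alg:hrefine-admissible} back to the single initially marked element $Q'$, and to estimate how far this marking can propagate. Since $\QQ^* \setminus \QQ$ consists of the children of the elements collected in $\mathcal{M}$, I would first bound $\dist(\hat Q, Q')$ for every marked element $\hat Q \in \mathcal{M}$, and only at the very end pass to its child $Q$, which is where the extra factor $2^{-\ell(Q)+1} = 2^{-\ell(\hat Q)}$ enters.

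First I would describe the structure of the propagation. Every element added to $\mathcal{M}$ during the repeat loop is inserted either (i) through a call to \texttt{MARK\_VERTEX-PATCH}, i.e. it lies in $\mathcal{N}_\chi(\hat P)$ for a marked element $\hat P$ of the same level adjacent to the same vertex, or (ii) through the admissibility step, i.e. it lies in $\mathcal{N}(\hat P, \mu)$ for a marked $\hat P$ of level higher by $\mu-1$. Reading these insertions backwards yields a chain $Q' = P_0, P_1, \ldots, P_m = \hat Q$ of marked elements in which each link is of type (i) (level-preserving) or of type (ii) (level-dropping by $\mu-1$). The one-step estimates $\dist(P_j, P_{j+1}) \le 2^{-\ell(P_j)} C_{\mathcal{N}_\chi}$ for type (i) and $\dist(P_j, P_{j+1}) \le 2^{-\ell(P_j)} C_{\mathcal{N}}$ for type (ii) are exactly the neighborhood distance bounds recorded before the lemma, the center $P_j$ being the higher-level element in the type-(ii) case.

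The crucial structural observation, and the main new ingredient relative to the single-patch argument of \cite[Lemma~12]{bgmp16}, is that type-(i) links cannot accumulate: since $\mathcal{N}_\chi(\hat P)$ depends only on the patch and the vertex and not on $\hat P$, a single application of \texttt{MARK\_VERTEX-PATCH} already saturates the whole vertex--patch region, and by the assumption of at least $4\times 4$ elements per patch that region never reaches a second vertex. Hence no two type-(i) links occur consecutively, and each type-(i) link can be paired with the type-(ii) link immediately following it into a combined step that lowers the level by $\mu-1$ while contributing at most $2^{-\ell}(C_{\mathcal{N}_\chi} + C_{\mathcal{N}})$. I would then induct over these combined steps: using the triangular inequality for $\dist$ (proved in Appendix~\ref{app:distance}), the invariant $\dist(\hat Q, Q') \le 2^{-\ell(\hat Q)} C_{\dist}$ propagates as long as $C_{\dist} \ge (C_{\mathcal{N}_\chi} + C_{\mathcal{N}})/(2^{\mu-1}-1)$, which is guaranteed by the definition $C_{\dist} = (C_{\mathcal{N}_\chi} + C_{\mathcal{N}})/(1-2^{1-\mu})$, while the terminal vertex-patch-only and initial admissibility-only situations are checked directly as base cases. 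Summing the combined steps is precisely the geometric series in $2^{-(\mu-1)}$ that yields the factor $1/(1-2^{1-\mu})$.

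Finally I would transfer the estimate from the marked element $\hat Q$ to its child $Q \in \QQ^* \setminus \QQ$. Because $\ell(Q) = \ell(\hat Q)+1$, the descendant relation \eqref{eq:dist_descendants} gives $\dist(Q,Q') \le \dist(\hat Q,Q') + 2^{-\ell(\hat Q)}$, and the generous choice of $C_{\dist}$ (which exceeds the minimal admissible value by more than one) together with the factor $2$ hidden in $2^{-\ell(Q)+1}$ absorbs this last term. The hard part is the structural control of the vertex-patch markings, namely establishing that they saturate and never cascade from one vertex to another, so that the level-preserving steps do not destroy the geometric decay on which the whole estimate rests; the remainder is the standard telescoping-and-geometric-series bookkeeping, now carried out with two neighborhood radii instead of one.
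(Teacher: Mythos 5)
Your proposal follows essentially the same route as the paper: both decompose the marking propagation into a chain of neighborhood steps from $Q'$ to the marked ancestor of $Q$, use the one-step distance and level bounds for the admissibility and vertex-patch neighborhoods, exploit the fact that two vertex-patch markings cannot occur consecutively to pair each level-preserving step with a level-dropping one, and sum the resulting geometric series in $2^{-(\mu-1)}$ before passing to the child via the triangular inequality. The only (cosmetic) difference is that you organize the summation as an inductive invariant and are somewhat more explicit about absorbing the final $\dist(Q,Q_0)=2^{-\ell(Q)}$ term into the slack of $C_{\dist}$, which the paper handles by bounding the partial sum starting at $k=1$ by the full series starting at $k=0$.
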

\begin{proof}
Let us assume that $\QQ$ and $\QQ^*$ are obtained by $\mathcal{T}$-admissible refinement, the case of $\mathcal{H}$-admissible refinement is proved analogously. Since $Q$ is activated from applying Algorithm~\ref{alg:hrefine-admissible}, there exists a sequence of elements $Q' = Q_J, Q_{J-1}, \ldots, Q_0$, such that $Q$ is a child of $Q_0$, and for each $j=1,\ldots, J$ either 
\begin{align*}
  Q_{j-1} \in {\RV \mathcal{N}_{\mathcal{T}}(Q_j,\mu)} \quad \text{ or } \quad
  Q_{j-1} \in {\RV \mathcal{N}_{\chi}(Q_j)},
\end{align*}
and moreover two markings of the second type, i.e., due to the vertex-patch neighborhood, do not appear consecutively. In the first case we have
\begin{equation} \label{eq:aux_neighborhood}
\dist(Q_j,Q_{j-1}) \le {\RV 2^{-\ell(Q_j)} C_{\mathcal{N_T}}} \; \text{ and } \; \ell(Q_{j-1}) = \ell(Q_j) - \mu + 1,
\end{equation}
while the second case gives
\[
\dist(Q_j,Q_{j-1}) \le {\RV 2^{-\ell(Q_j)} C_{\mathcal{N}_\chi}} \; \text{ and } \; \ell(Q_{j-1}) = \ell(Q_j).
\]
{\RV From the triangular inequality for the distance, it holds that}
\[
\dist(Q,Q') \le \dist(Q,Q_0) + \dist(Q_0,Q') \le \dist(Q,Q_0) + \sum_{j=1}^J \dist(Q_j,Q_{j-1}),
\]
and since $Q$ is a child of $Q_0$ {\RV obtained by} dyadic refinement, {\RV from the definition of the distance the first term satisfies $\dist(Q,Q_0) = 2^{-\ell(Q)}$.}

For the sum, we know that two markings from the vertex-patch neighborhood do not appear consecutively, thus we can put ourselves in the worst case scenario, where the two types of marking alternate at every step. For simplicity, we can assume that $J$ is even and, without loss of generality, that we mark the admissibility neighborhood at odd steps, and the vertex-patch neighborhood at even steps. We then have
\[
\ell(Q_j) = \left \{
\begin{array}{ll}
\ell(Q_0) + (\mu-1) \, (j+1)/2  &\text{ if $j$ is odd}, \\
\ell(Q_0) + (\mu-1) \, j/2 &\text{ if $j$ is even},
\end{array}
\right.
\]
from what we obtain
\begin{align*}
  & \sum_{j=1}^J \dist(Q_j,Q_{j-1}) = \sum_{k=1}^{J/2} \left(\dist(Q_{2k},Q_{2k-1}) + \dist(Q_{2k-1},Q_{2k-2}) \right)\\
  & \le \sum_{k=1}^{J/2} {\RV \left( 2^{-\ell(Q_{2k})} C_{\mathcal{N}_\chi} + 2^{-\ell(Q_{2k-1})} C_{\mathcal{N_T}} \right) = \sum_{k=1}^{J/2} 2^{-\ell(Q_0) - k(\mu-1)} (C_{\mathcal{N}_\chi} + C_{\mathcal{N_T}}) } \\
  & < {\RV 2^{-\ell(Q_0)} (C_{\mathcal{N}_\chi} + C_{\mathcal{N_T}})} \sum_{k=0}^{\infty} 2^{- k(\mu-1)} = \frac{{\RV 2^{-\ell(Q)+1}}}{1- 2^{1-\mu}} (C_{\mathcal{N}_\chi} + {\RV C_{\mathcal{N_T}}}),
\end{align*}
where in the last step we have used that $Q$ is a child of $Q_0$, and the same arguments as in \cite{bgmp16}. The proof for the $\mathcal{H}$-admissible case is analogous, replacing the $\mathcal{N_T}$ neighborhood by the $\mathcal{N_H}$ neighborhood, and the constant {\RV $C_{\mathcal{N_T}}$ by $C_{\mathcal{N_H}}$} in \eqref{eq:aux_neighborhood}.
\end{proof}

The following theorem states the linear complexity of the refinement algorithm, and generalizes the single-patch complexity estimates \cite[Theorem~13]{bgmp16} to the multi-patch case, see also \cite[Theorem~2.4]{bdd04} and \cite[Theorem~3.2]{stevenson07}.
\begin{theorem}
Let $\QQ_0 = \GG^0$ and $\mu \ge 2$, and let $\QQ_0, \QQ_1, \ldots, \QQ_J$ the sequence of admissible meshes generated from the call to Algorithm~\ref{alg:hrefine-admissible}, namely
\[
\QQ_j = \mathtt{REFINE}(\QQ_{j-1}, {\cal M}_{j-1}, \mu), \quad {\cal M}_{j-1} \subseteq \QQ_{j-1} \text{ for } j \in \{1, \ldots, J\}.
\]
Then, there exists a positive constant $\Lambda$ such that 
$\#\QQ_J - \# \QQ_0 \le \Lambda \sum_{j=0}^{J-1} \# {\cal M}_j$,
which {\RV depends on $p,r,\mu$, 
the maximum valence $\nu$ and the admissibility type.}
\end{theorem}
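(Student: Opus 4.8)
The plan is to adapt the charging argument of Binev--Dahmen--DeVore \cite{bdd04} and Stevenson \cite{stevenson07}, in the hierarchical-spline form of \cite[Theorem~13]{bgmp16}, using the multi-patch distance estimate of Lemma~\ref{lem:complexity_auxiliary} in place of its single-patch analogue. First I would reduce the statement to counting refined elements: since refinement is dyadic (one quadrilateral is replaced by four children) and every element can be refined at most once along the whole sequence, the net increase $\#\QQ_J - \#\QQ_0$ equals three times the total number of elements refined over all steps. Denoting by $\mathcal{R}_j \supseteq {\cal M}_j$ the set of elements actually refined when producing $\QQ_{j+1}$ (the set $\mathcal{M}$ at the end of the \texttt{Repeat} loop of Algorithm~\ref{alg:hrefine-admissible}), it therefore suffices to prove $\sum_{j=0}^{J-1} \#\mathcal{R}_j \le \Lambda \sum_{j=0}^{J-1} \#{\cal M}_j$.

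The two ingredients I would rely on are the following. The first is Lemma~\ref{lem:complexity_auxiliary}: tracing back the chain $Q' = Q_J, \ldots, Q_0$ produced by the neighborhood rules, every refined element $Q$ can be associated to a marked element $Q'$ with $\ell(Q') \ge \ell(Q) - 1$ and $\dist(Q,Q') \le 2^{-\ell(Q)+1} C_{\dist}$; since refining a union of marked elements is contained in the union of the refinements of the singletons, this association is well defined step by step. The second is a purely geometric bound: for fixed level $\ell$ the number of elements $Q \in \GG^\ell$ with $\dist(Q, Q') \le 2^{-\ell+1} C_{\dist}$ is at most a constant $C_{\mathrm{geom}}$, which follows from the bound on the cardinality of $\Pi^{s}(Q)$ remarked after the definition of the distance and which depends on $C_{\dist}$ and on the maximum valence $\nu = \max_{i \in \indChi} \nu_i$. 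This is the point where the multi-patch geometry enters the final constant, replacing the tensor-product counting of the single-patch case.

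With these at hand I would set up a fractional charging scheme between the (distinct) refined elements and the (distinct) marked elements of the whole history. To each refined element $Q$ I assign one unit of charge, distributed among the marked elements responsible for its creation with weights that decay geometrically in the level difference; the decay factor is dictated by the level drop $\mu - 1$ of the admissibility neighborhood, so that summing the weights over the refined elements associated to a fixed marked element $Q'$ produces a convergent geometric series, bounded by $C_{\mathrm{geom}}$ times a constant depending only on $\mu$. Double counting then gives $\sum_j \#\mathcal{R}_j = \sum_{Q'} \sum_Q c(Q,Q') \le \Lambda \sum_j \#{\cal M}_j$, and combining with the dyadic reduction yields the theorem, with $\Lambda$ depending on $p, r, \mu, \nu$ and, through $C_{\dist}$ (hence $C_{\mathcal{N}} \in \{C_{\mathcal{N_H}}, C_{\mathcal{N_T}}\}$), on the admissibility type.

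The hard part will be the bookkeeping of the charging when the two neighborhood rules are interleaved. Unlike the single-patch case of \cite{bgmp16}, here a cascade step may either lower the level by $\mu - 1$ (the admissibility neighborhood $\mathcal{N}$) or preserve it (the vertex-patch neighborhood $\mathcal{N}_\chi$), and it is precisely the non-consecutiveness of two vertex-patch markings --- already used in Lemma~\ref{lem:complexity_auxiliary} to obtain the factor $1 - 2^{1-\mu}$ in $C_{\dist}$ --- that must be exploited again to keep the geometric series summable and the charge received by each marked element uniformly bounded. Verifying that the fractional charge can be consistently defined across the entire refinement history, so that each refined element is fully paid for while no marked element is overcharged, is the delicate step; the additivity of \texttt{REFINE} over singleton markings and the level-distance control of Lemma~\ref{lem:complexity_auxiliary} are what make it go through.
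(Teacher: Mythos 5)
Your proposal follows essentially the same route as the paper: the paper's proof is a direct reduction to the charging argument of \cite[Theorem~13]{bgmp16} (in the spirit of \cite{bdd04,stevenson07}), substituting Lemma~\ref{lem:complexity_auxiliary} for the single-patch distance estimate and bounding the cardinality of the set $B(Q',j)$ of same-scale nearby elements via the regions $\Pi^s(Q')$, whose element count depends on the maximum valence $\nu$ --- exactly the two ingredients you identify. Your sketch simply spells out in more detail the fractional charging and the dyadic counting reduction that the paper leaves implicit by citation, so the approach and the source of the constant $\Lambda$ (through $C_{\dist}$, hence $p$, $r$, $\mu$, $\nu$ and the admissibility type) coincide.
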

\begin{proof}
The proof is completely analogous to the one in \cite[Theorem~13]{bgmp16}, using the new estimate introduced in Lemma~\ref{lem:complexity_auxiliary} in place of \cite[Lemma~12]{bgmp16}. The only difference is in the bound of the number of elements in the set
\[
B(Q',j) = \{ Q \in \GG^j : \dist(Q, Q') < 2^{1-j} C_{\dist} \},
\]
the set of elements of level {\RV $0 \le j \le \ell(Q')+1$} with distance to $Q' \in \QQ$ smaller than $2^{1-j} C_{\dist}$. A bound independent on $Q'$ and $j$ is easily proved {\RV by relating $B(Q',j)$ to a region $\Pi^s(Q')$, with $s$ depending on $C_{\dist}$, and from the boundedness of the number of elements contained in $\Pi^s(Q')$.}
\end{proof}

\section{Numerical tests} \label{sec:numerical_tests}
This section contains some numerical tests showing the application of the hierarchical $C^1$ spaces to adaptive isogeometric methods. In the examples, we consider both the Poisson problem and the biharmonic problem, where the adaptive refinement is {\RV driven by an a posteriori error estimator}. The implementation is done based on the algorithms for hierarchical splines from \cite{garau2018}, using the representation in terms of B-splines in Section~\ref{subsec:standard_representation} and the refinement mask in Section~\ref{sec:nestedness} for the evaluation and truncation of basis functions. {\RV For building the geometry parameterizations {\MK in Examples~\ref{ex:Example1} and \ref{ex:Example2}} we follow the same approach as in \cite{FJKT23}, creating first an analysis-suitable $G^1$ geometry as a template, and then refitting it into a pullback {\MK of $\UW^2$}.
}

Our code is written in Matlab and is an extension of the one for the two-patch case \cite{BrGiKaVa20}, to which we refer for the details. The most important differences come from the need for local re-parameterizations, as already mentioned in Remark~\ref{rem:orientation}. These can be easily computed by arranging the control points in a two-dimensional array and applying simple changes in the directions of the array\footnote{In Matlab, one can use the commands \texttt{fliplr}, \texttt{flipud} and \texttt{transpose}, or just consecutive uses of \texttt{rot90} if the Jacobian is assumed to be positive.}, and the same kind of arrangement should be applied to the indices of B-spline functions when computing the coefficients in Sections~\ref{subsec:standard_representation} and~\ref{sec:nestedness}. Moreover, since every edge is attached to two vertices, the orientation of the edge in the vertex configuration may differ with respect to the one used for the definition of edge basis functions, in the sense that the relative position of two adjacent patches will change. In this case it is necessary, first, to correctly identify the five ``extended'' edge functions which are close to the vertex to compute the restricted matrices $\widehat E_{i_m,1}$ and $\widehat E_{i_{m+1},0}$ in Section~\ref{subsec:standard_representation}, and second to take into account the sign change in the gluing data and the vectors ${\bf d}^{(i)}(\xi)$ and ${\bf t}^{(i)}(\xi)$ defined in Section~\ref{subsubsec:ASG1}, which also influences the sign of the last two columns of matrices $K_{i,m}$ and $K_{i,m+1}$.


\subsection{Poisson problem} \label{sec:poisson}
In the first two examples we consider the Poisson problem
\[
\left \{
\begin{array}{rl}
- \Delta u = f & \text{ in } \Omega, \\
u = g & \text{ on } \partial \Omega, 
\end{array}
\right.
\]
which we solve by an adaptive isogeometric method, see, e.g., \cite{BrBuGiVa19}.
More precisely, we {\sl solve} the problem in its variational formulation imposing the Dirichlet boundary condition by Nitsche's method. {\RV Imposing the boundary condition strongly is cumbersome, but still possible, because the restriction of the $C^1$ basis functions to the boundary is linearly dependent}. Let us denote $\mathbb{W}_h = \mathrm{span}\{\mathcal{H}_\UW\}$, we determine $u_h \in \mathbb{W}_h$ such that for all $v_h \in \mathbb{W}_h$
\begin{align*}
\int_\Omega \nabla u_h \cdot \nabla v_h - \int_{\Gamma_D} \frac{\partial u_h}{\partial {\bf n}} v_h - \int_{\Gamma_D} u_h \frac{\partial v_h}{\partial \bf n} + 
\int_{\Gamma_D} \frac{\gamma}h_Q u_h v_h \\
= 
\int_\Omega f v_h - \int_{\Gamma_D} g \frac{\partial v_h}{\partial \bf n} + \int_{\Gamma_D} \frac{\gamma}h_Q g v_h,
\end{align*}
where $h_Q$ is the local element size, and $\gamma = 10(p+1)$, with $p$ being the degree, is the penalization {\RV parameter}. The {\sl error estimate} is computed with the residual-based estimator
\[
\varepsilon^2(u_h) = \sum_{Q \in {\cal Q}} \varepsilon^2_{Q}(u_h), \; \text{ with } \; \varepsilon^2_{Q}(u_h)=h_Q^2 \int_{Q} \vert f+\Delta u_h\vert^2.
\]
The {\sl marking} of the elements at each iteration is done using D\"orfler's strategy. 
In the {\sl refinement} step we apply Algorithm~\ref{alg:hrefine-admissible}, and therefore we refine dyadically the marked elements, plus the ones necessary to guarantee linear independence and {\RV admissibility}.

For both examples we report the results for degrees $p=3, 4$ and $5$, with regularity~$r=p-2$ and $C^1$ smoothness across the interfaces. We test the methods obtained by employing both the non-truncated and the truncated basis, and the refinement providing admissibility of class $\mu=2,3$. The goal is to show that using the $C^1$ space basis does not spoil the properties of the local refinement, and in particular the advantage over uniform refinement.
\begin{figure}[t!]
\begin{center}
\begin{subfigure}[Domain of Example~\ref{ex:Example1}.]{
  \includegraphics[trim=10cm 0cm 8cm 1cm,width=0.4\textwidth,clip]{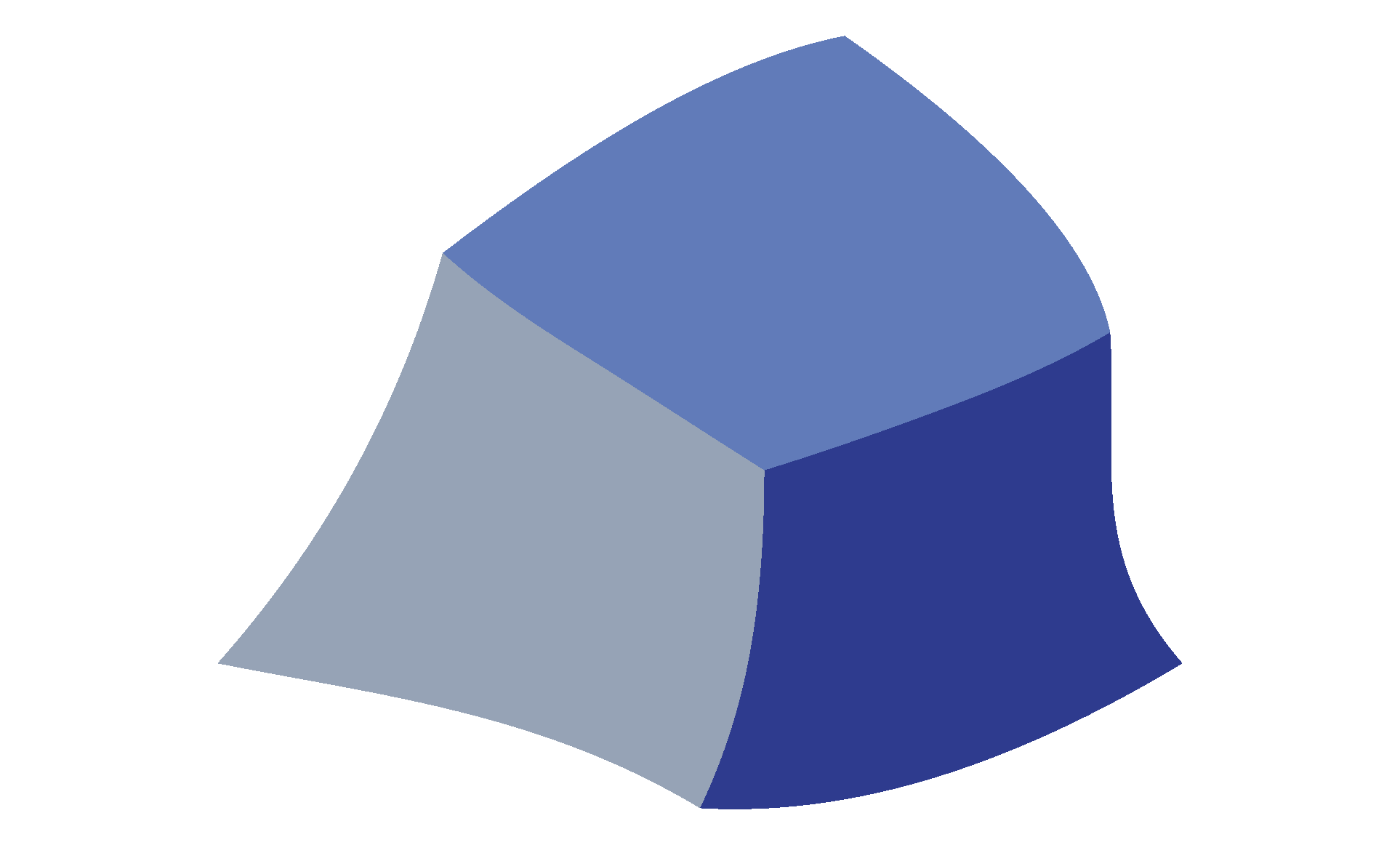} 
\label{fig:ex1-domain}
}
\end{subfigure}
\begin{subfigure}[Exact solution of Example~\ref{ex:Example1}.]{
	\includegraphics[trim=1cm 0mm 0cm 0mm,width=0.47\textwidth,clip]{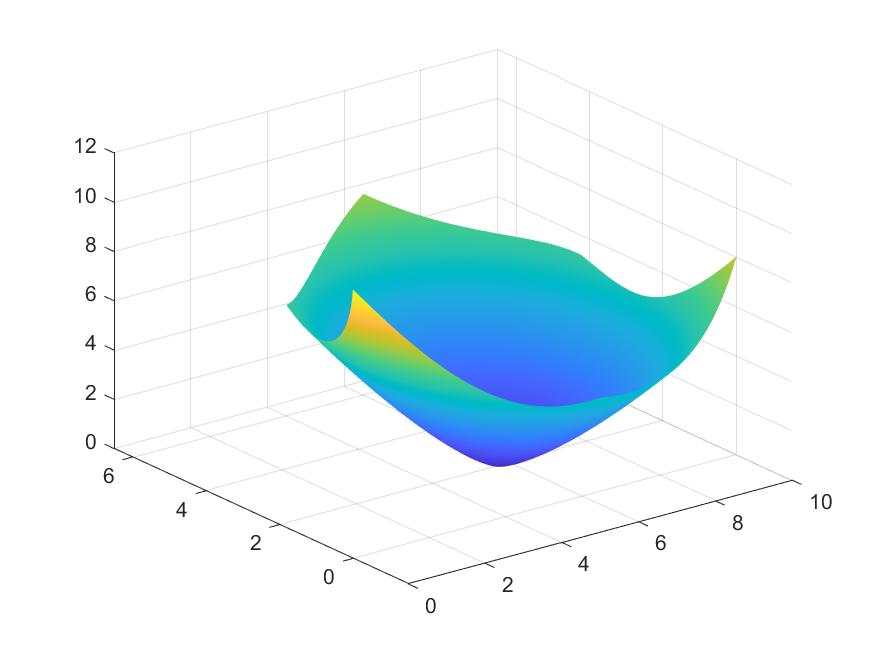}
\label{fig:ex1-solution}
}
\end{subfigure}
\begin{subfigure}[Domain of Example~\ref{ex:Example2}.]{
  \includegraphics[trim=10cm 5cm 8cm 2cm,width=0.4\textwidth,clip]{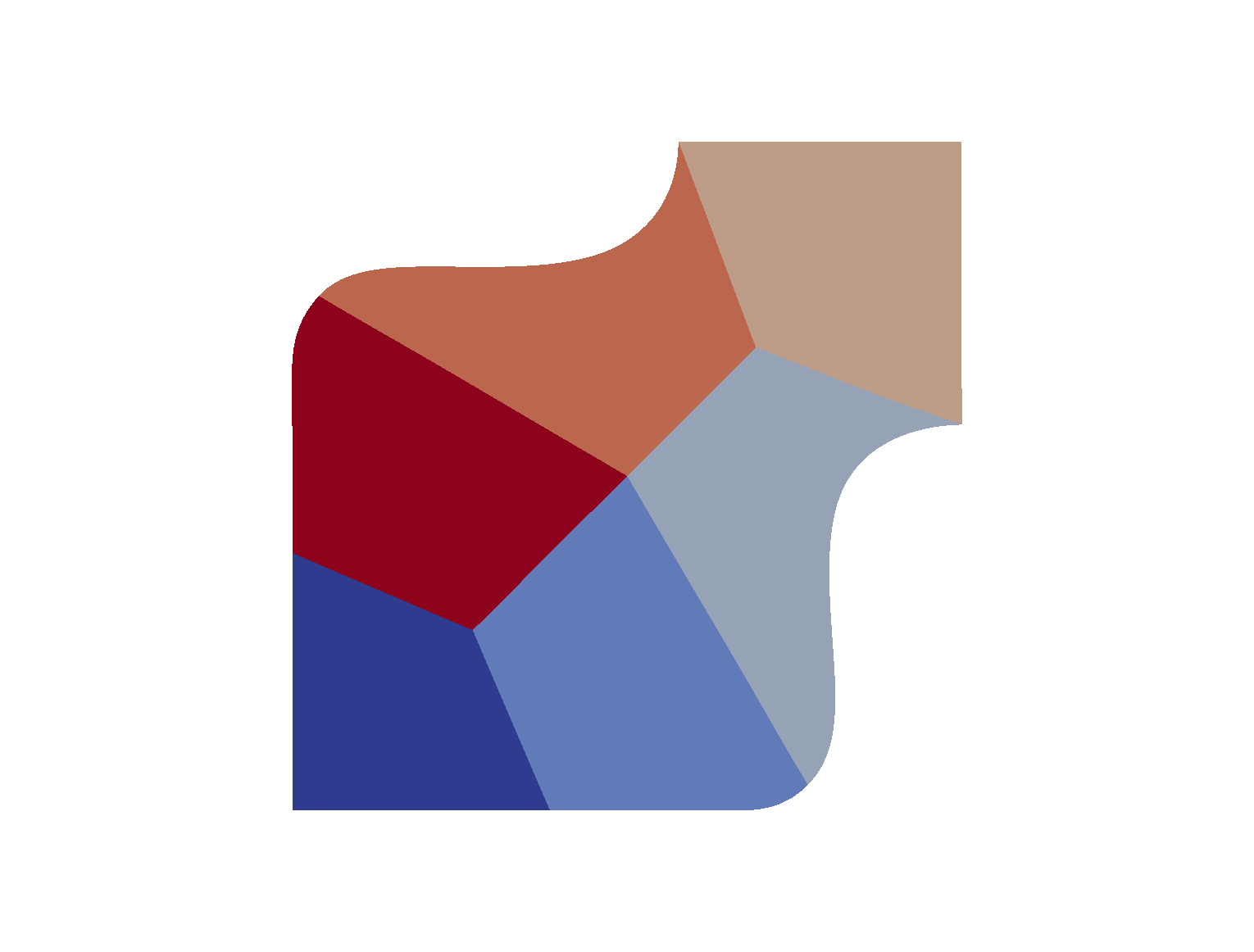} 
\label{fig:ex2-domain}
}
\end{subfigure}
\begin{subfigure}[Exact solution of Example~\ref{ex:Example2}.]{
	\includegraphics[trim=1cm 0mm 0cm 0mm,width=0.47\textwidth,clip]{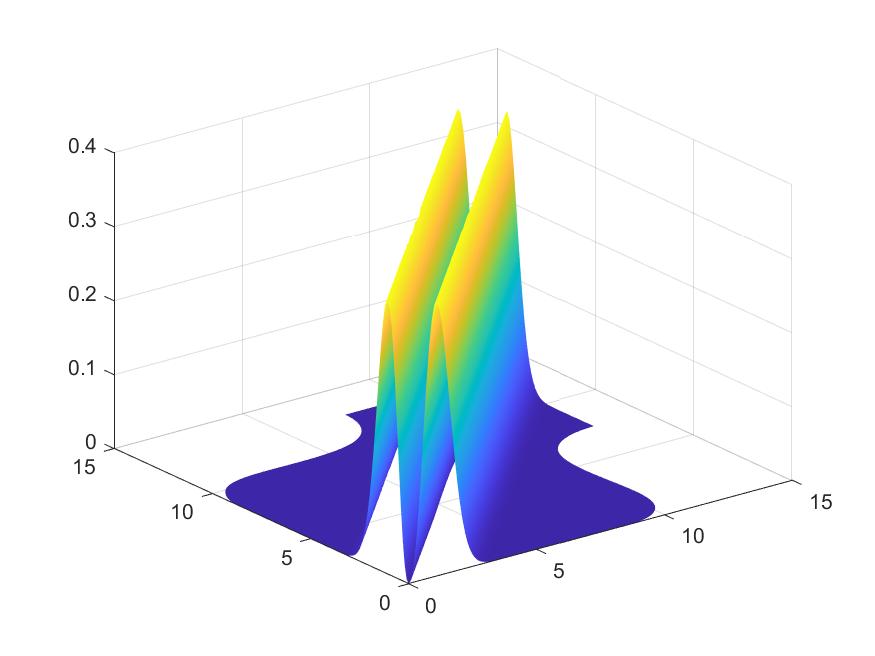}
\label{fig:ex2-solution}
}
\end{subfigure}
 \caption{{\RV Domains and exact solutions of Example~\ref{ex:Example1} (top) and Example~\ref{ex:Example2} (bottom).}}
 \label{fig:domains1}
 \end{center}
\end{figure}

\begin{example} \label{ex:Example1}
For the first numerical example we consider the three-patch domain shown in Fig.~\ref{fig:ex1-domain} which has been constructed in \cite{KaSaTa19a} and possesses an analysis-suitable $G^1$ multi-patch parameterization. We study the Poisson problem with exact solution
\begin{equation*}
{\RV u(\mathbf{x})= \left \Vert \mathbf{x}-\mathbf{P} \right \Vert^{\frac{4}{3}}},
\end{equation*}
which is characterized by a {\RV singularity} at the point $\mathbf{P} = \left(17/3, 2\right)$, coinciding with the interior vertex of the geometry, see Fig.~\ref{fig:ex1-solution}. 

The starting coarse mesh has $4 \times 4$ elements on each patch, and we use D\"orfler's parameter equal to $0.80$ for marking the elements. We run the adaptive method until the hierarchical space reaches {\RV twelve} levels. The behavior of the error in $H^1$ semi-norm with respect to the number of degrees of freedom (NDOF) is presented in Fig.~\ref{fig:laplacianex1}, where it is evident the advantage of using local refinement over the uniform one, regardless of the chosen basis and of the admissibility class. For higher degrees, the refinement for the truncated basis tends to give smaller errors, but in all cases the optimal convergence rate is achieved. 

In Fig.~\ref{fig:hmsh_ex1} we show the meshes obtained for degree $p=5$ {\RV and admissibility class $\mu=2$} after reaching six levels. As already observed in \cite{BrBuGiVa19} for THB-splines, the refinement based on the truncated basis is more local than the one for the hierarchical basis. The figure also shows how marking elements adjacent to the vertex extends the refinement to the elements in the vertex-patch neighborhood, but this does not affect the convergence rates.

\begin{figure}
\begin{subfigure}[Degree 3.]
{  \includegraphics[trim=5mm 0mm 5mm 0mm,width=0.48\textwidth,clip]{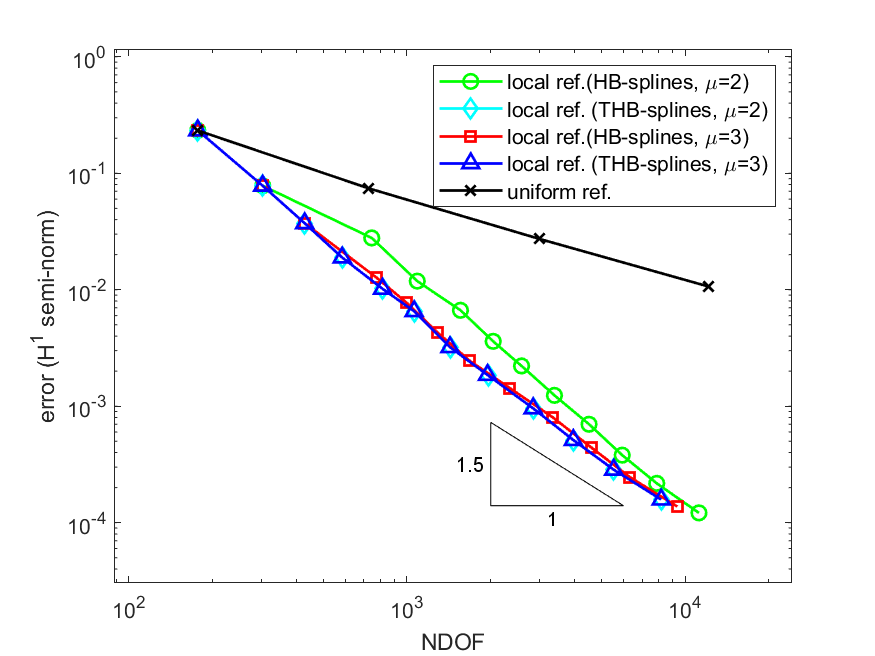}}
\end{subfigure}
\begin{subfigure}[Degree 4.]
{  \includegraphics[trim=5mm 0mm 5mm 0mm,width=0.48\textwidth,clip]{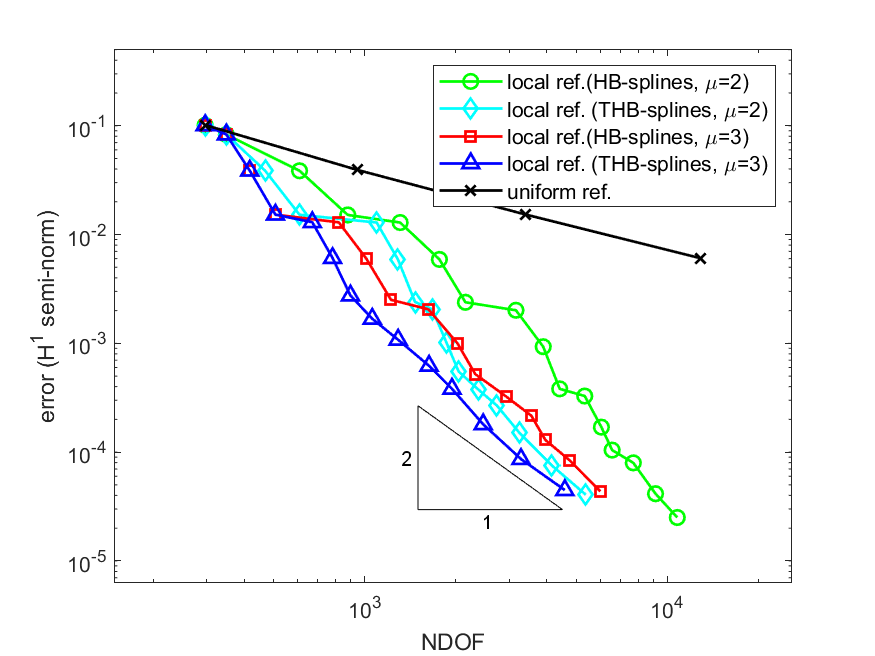}}
\end{subfigure}
\centering
\begin{subfigure}[Degree 5.]
{  \includegraphics[trim=5mm 0mm 5mm 0mm,width=0.50\textwidth,clip]{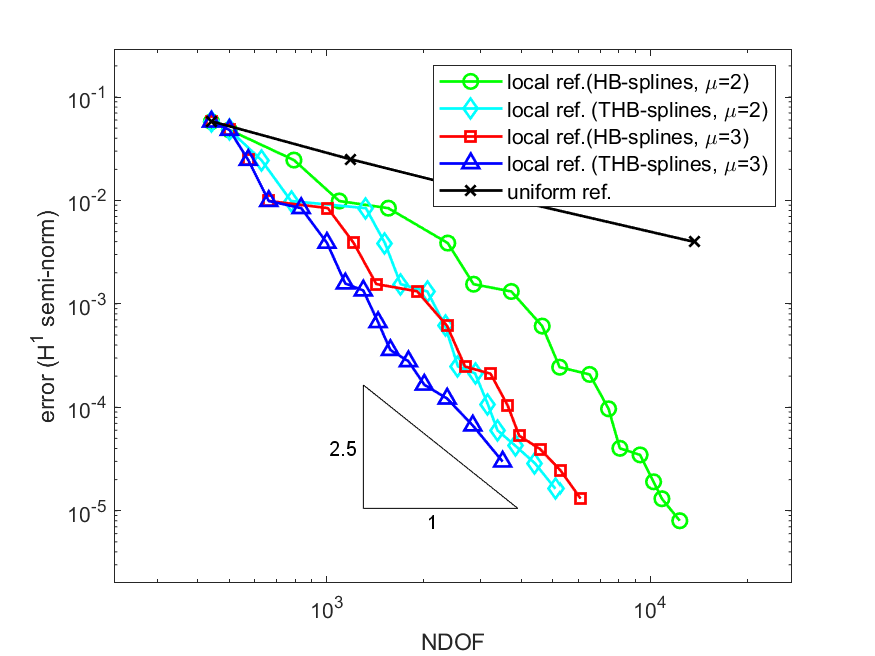}}
\end{subfigure}
 \caption{Example~\ref{ex:Example1} (Poisson problem on a three-patch domain): convergence plots for degrees 3, 4 and 5 with admissibility $\mu=2,3$ in the refinement.}
 \label{fig:laplacianex1}
\end{figure}
\begin{figure}[th]
\begin{center}
\begin{subfigure}[Non-truncated, $\mu=2$, {\RV 3723} NDOF.]{
  \includegraphics[trim=60mm 10mm 60mm 10mm,width=0.47\textwidth,clip]{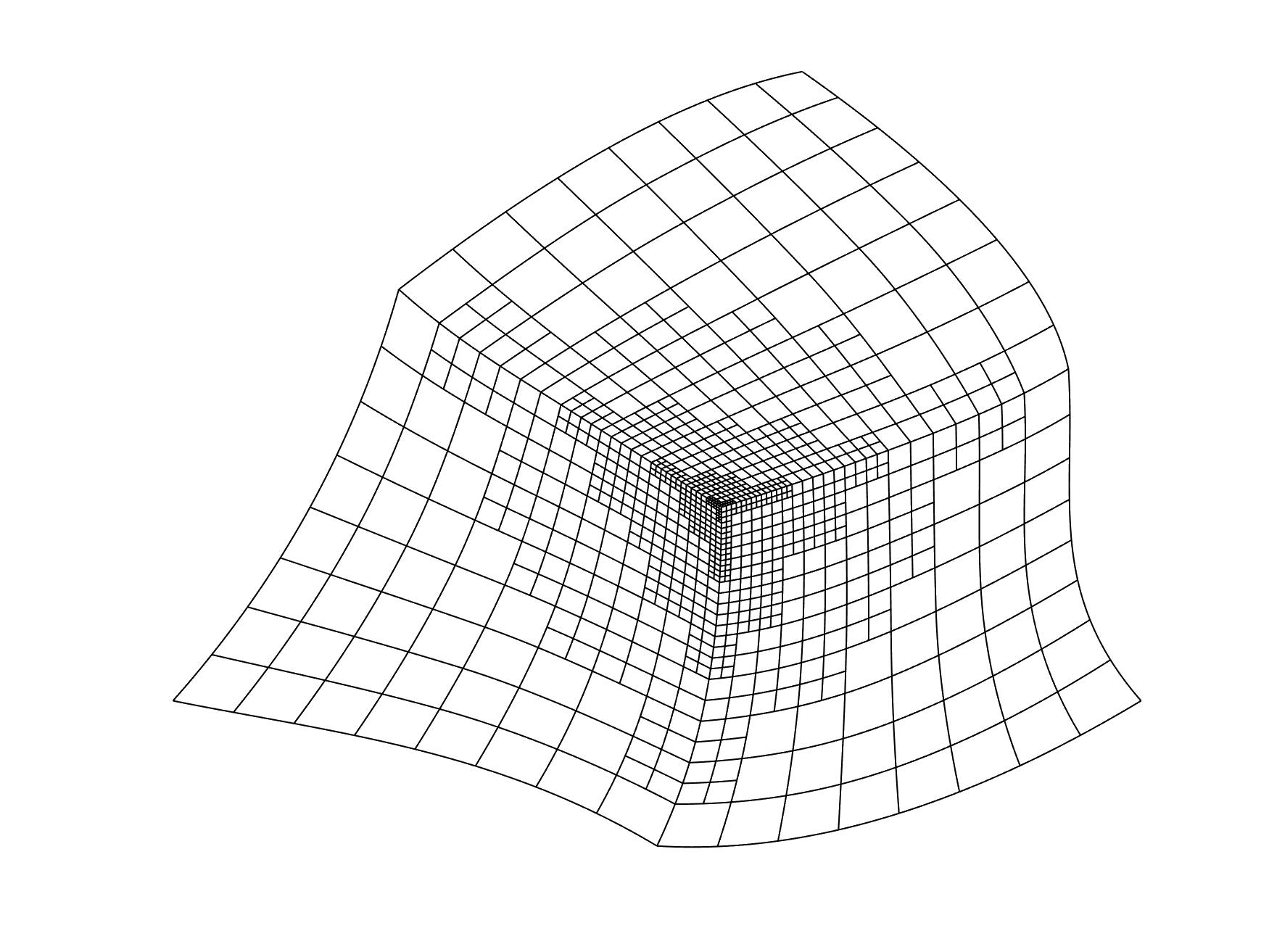}
}
\end{subfigure}
\begin{subfigure}[Truncated, $\mu=2$, {\RV 2049} NDOF.]{
  \includegraphics[trim=60mm 10mm 60mm 10mm,width=0.47\textwidth,clip]{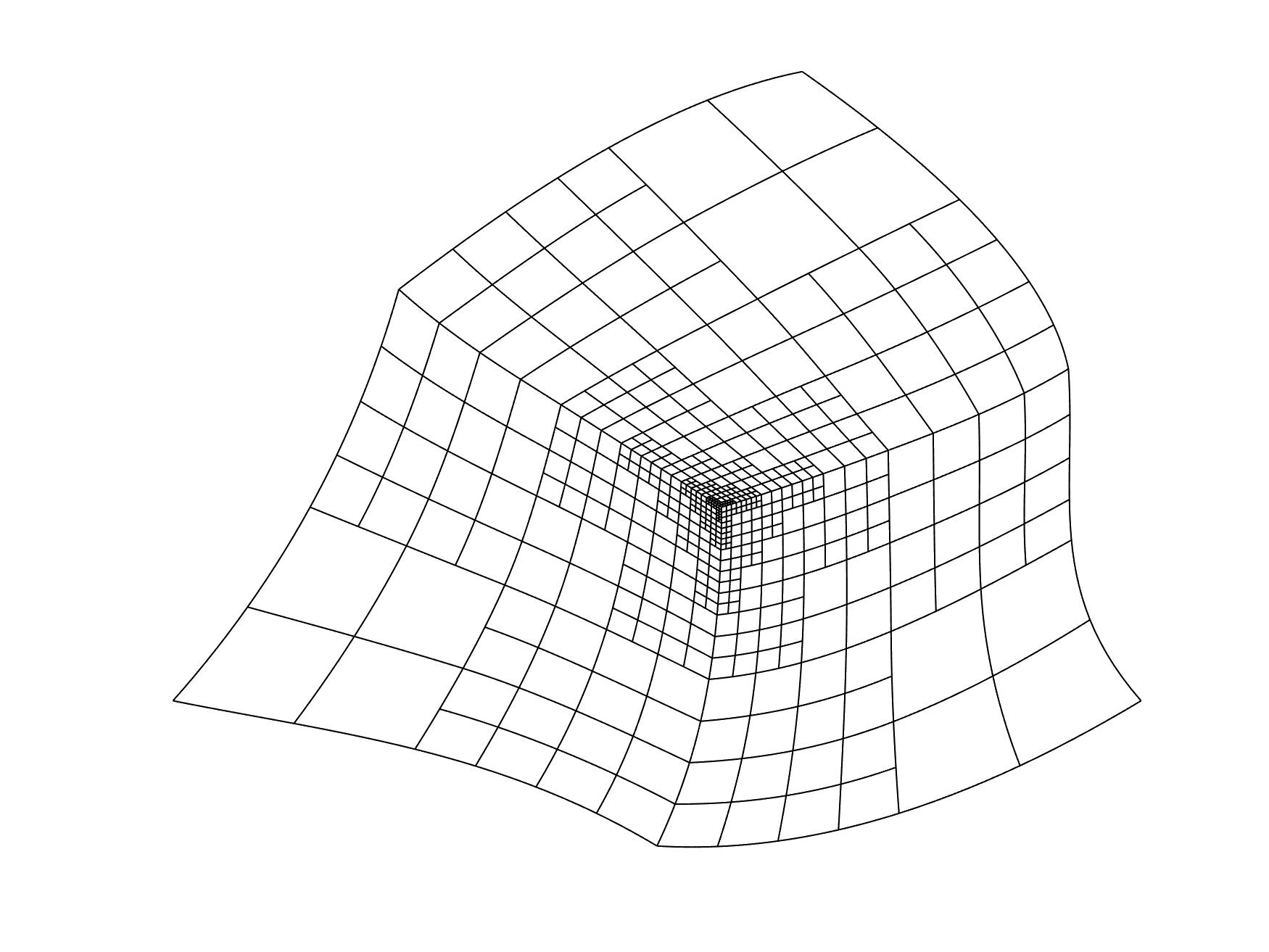}
}
\end{subfigure}
 \caption{Example~\ref{ex:Example1}: six-level meshes obtained for degree~$5$.}
 \label{fig:hmsh_ex1}
 \end{center}
\end{figure}

\end{example}

\begin{example} \label{ex:Example2}
In the second example, we consider the six-patch domain shown in Fig.~\ref{fig:ex2-domain} with an analysis-suitable $G^1$ multi-patch parameterization, {\RV which is a refitting {\MK of the geometry from Example~2 in \cite{KaSaTa17b} into a pullback of $\UW^2$}},
and the data of the problem are chosen such that the exact solution is
\begin{equation*}
{\RV u(x,y) = (y-x)^{\frac{7}{3}}e^{-(y-x)^2}}.
\end{equation*}
{\RV This solution has a singularity along the straight line $y=x$} crossing the whole domain: in the middle part it coincides with two interfaces, while at the two endpoints it crosses two patches, see Fig.~\ref{fig:ex2-solution}. {\RV Moreover, next to the singularity line there are two other smooth but quite sharp ridges, which also require local refinement}. In this example the initial mesh has $6 \times 6$ elements on each patch, and we run the adaptive method with D\"orfler's parameter equal to {\RV $0.80$ until the dimension of the hierarchical space exceeds $8\cdot 10^4$. The example of the hierarchical mesh in Fig.~\ref{fig:laplacianex2b} indicates that the refinement is indeed localized around the singularity. In Fig.~\ref{fig:laplacianex2a} we compare the convergence results obtained for uniform refinement and for the adaptive method with ${\cal T}$-admissible meshes and $\mu=3$, which show that adaptive refinement gives a clear advantage over uniform meshes. However, the optimal convergence rate is not achieved, because the edge singularity would require to refine in an anisotropic fashion. Since the solution belongs to $H^{17/6-\epsilon}(\Omega)$ for any $\epsilon >0$, using the same heuristic arguments as in Section~6.1.6 of \cite{reviewadaptiveiga} (see also references therein), the expected convergence rate for isotropic meshes is $\min\{p/2, 11/6\}$, where $11/6$ doubles the one obtained for uniform refinement.}


\begin{figure}
\begin{center}
  \begin{subfigure}[{\RV Mesh for $p=4$, 80961 NDOF.}]
{\includegraphics[trim=100mm 10mm 140mm 0mm,width=0.43\textwidth,clip]{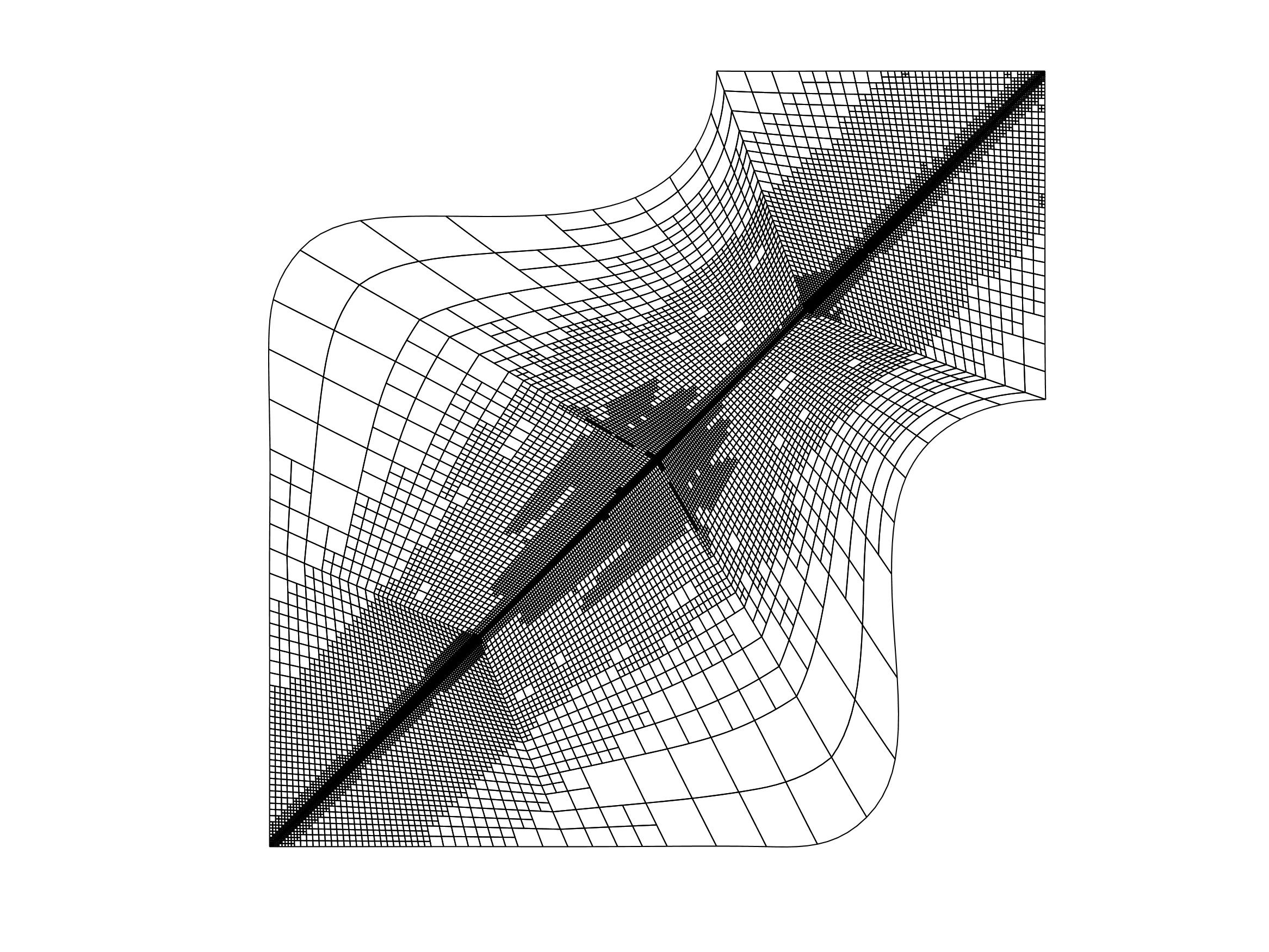} \label{fig:laplacianex2b}}
\end{subfigure}
  \begin{subfigure}[{\RV Convergence plots with degrees 3, 4 and 5.}]
{\includegraphics[trim=5mm 0mm 10mm 0mm,width=0.53\textwidth,clip]{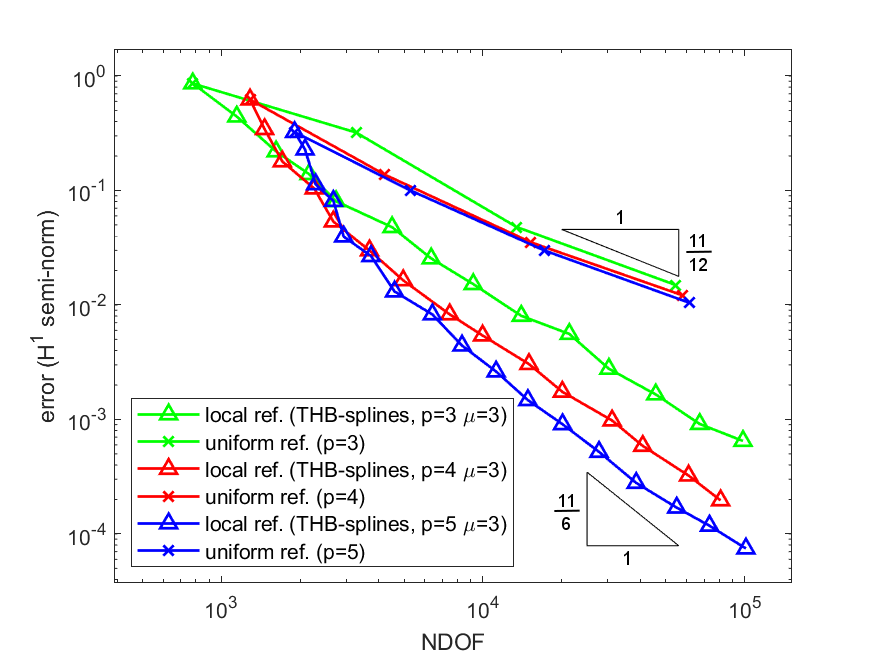} \label{fig:laplacianex2a}} 
\end{subfigure}
 \caption{Example~\ref{ex:Example2}: {\RV a hierarchical mesh, and convergence for ${\cal T}$-admissible meshes with $\mu=3$.}}
 \label{fig:laplacianex2}
 \end{center}
\end{figure}


\end{example}

\subsection{Biharmonic problem}
For the final numerical test, we consider the biharmonic problem
\[
\left \{
\begin{array}{rl}
\Delta^2 u = f & \text{ in } \Omega, \\
u = g_1 & \text{ on } \partial \Omega, \\
\displaystyle \frac{\partial u}{\partial n} = g_2 & \text{ on } \partial \Omega.
\end{array}
\right.
\]
In order to solve the direct formulation of this problem with a Galerkin method, we need to use a discretization space of $C^1$ functions, and therefore in this case the $C^1$ hierarchical basis is a natural choice to define an adaptive isogeometric method. Let us denote $\mathbb{W}_h = \mathrm{span}\{\mathcal{H}_\UW\}$ and $\mathbb{W}_{0,h} = \mathbb{W}_h \cap H^2_0(\Omega)$. The problem is to find $u_{0,h} \in \mathbb{W}_{0,h}$ such that for all $v_h \in \mathbb{W}_{0,h}$ it holds that 
\[
\int_\Omega \Delta u_{0,h} \Delta v_h = \int_\Omega f v_h - \int_\Omega \Delta u_{b,h} \Delta v_h,
\]
where $u_{b,h} \in \mathbb{W}_h$ is a discrete function that satisfies the boundary conditions, {\RV and we solve it with an adaptive isogeometric method.}
To avoid the computation of third and fourth order derivatives that would appear on Nitsche's method, the boundary conditions are imposed strongly through a projection into the space generated by boundary functions. For the same reason, instead of the residual error estimator we use the estimator presented in \cite{AnBuCo20}, which follows the original idea of \cite{BaSm93}, by enriching the space with $C^1$ bubble functions of degree $p+1$, and support on one single element. In particular, if we define the space of bubble functions $\mathbb{B}_h$, and define our solution as $u_h = u_{0,h} + u_{b,h}$, we compute an estimator of the error as the unique function $e_h \in \mathbb{B}_h$ such that for all $b_h \in \mathbb{B}_h$ it holds
\[
\int_\Omega \Delta e_{h} \Delta b_h = \int_\Omega f b_h - \int_\Omega \Delta u_{h} \Delta b_h,
\]
and an estimate of the error on each element $Q \in \QQ$ is given by computing the energy norm $\| e_h \|_{E(Q)}$.

\begin{example} \label{ex:Example3}
For the last numerical test we solve the biharmonic problem in the L-shaped domain composed of eight bilinearly parameterized patches as depicted in Fig.~\ref{fig:ex3-domain}, with exact solution, in polar coordinates $(\rho, \theta)$, given by 
\[
u(\rho,\theta)=\rho^{z+1}(C_1\,F_1(\theta) - C_2\,F_2(\theta)),
\]
where
\begin{align*}
&C_1=\frac{1}{z-1} \sin\left(\frac{3(z-1)\pi}{2}\right) - 
\frac{1}{z-1}\sin\left(\frac{3(z+1)\pi}{2}\right),\\
&C_2=\cos\left(\frac{3(z-1)\pi}{2}\right) - \cos\left(\frac{3(z+1)\pi}{2}\right),\\
&F_1(\theta)=\cos((z-1)\theta) - \cos((z+1)\theta),\\
&F_2(\theta)=\frac{1}{z-1}\sin((z-1)\theta) - \frac{1}{z+1}\sin((z+1)\theta).
\end{align*}
and $z= 0.544483736782464$, that is, the smallest positive solution of 
\[
\sin (z \omega) + z \sin (\omega) = 0,
\]
with $\omega = 3\pi / 2$ for the L-shaped domain, see \cite[Section~3.4]{Grisvard}. It is well known that this solution has a singularity at the re-entrant corner. We present the results for degrees $p=3, 4, 5$, with regularity~$r = p-2$, obtained by employing both the non-truncated and the truncated basis, with admissibility of class $\mu=3$, and D\"orfler parameter equal to $0.80$.

\begin{figure}[th]
\begin{center}
\begin{subfigure}[Domain of Example~\ref{ex:Example3}]{
  \includegraphics[trim=10cm 0cm 8cm 1cm,width=0.4\textwidth,clip]{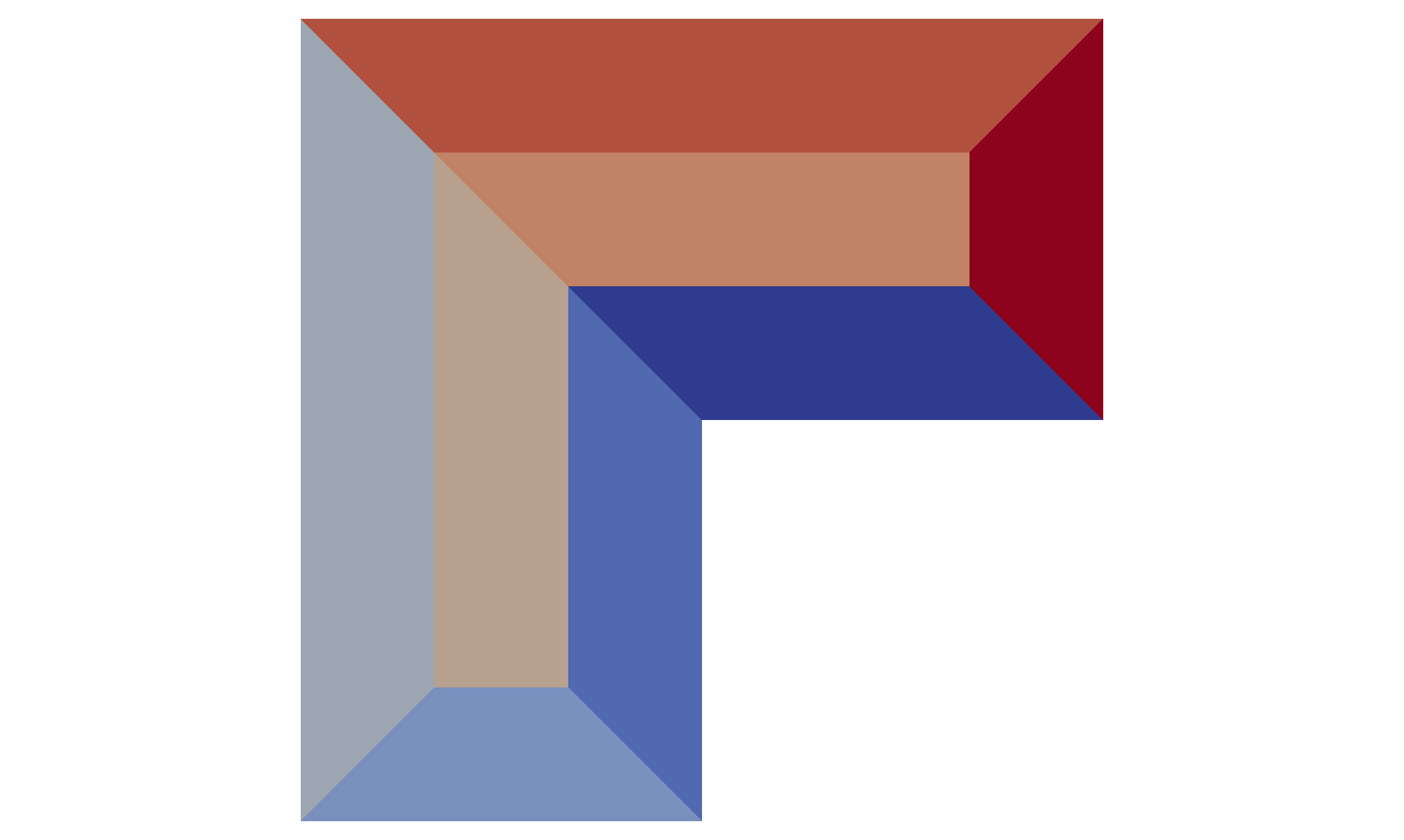} 
\label{fig:ex3-domain}
}
\end{subfigure}
\begin{subfigure}[Exact solution of Example~\ref{ex:Example3}]{
  \includegraphics[trim=10mm 0mm 10mm 0mm,width=0.42\textwidth,clip]{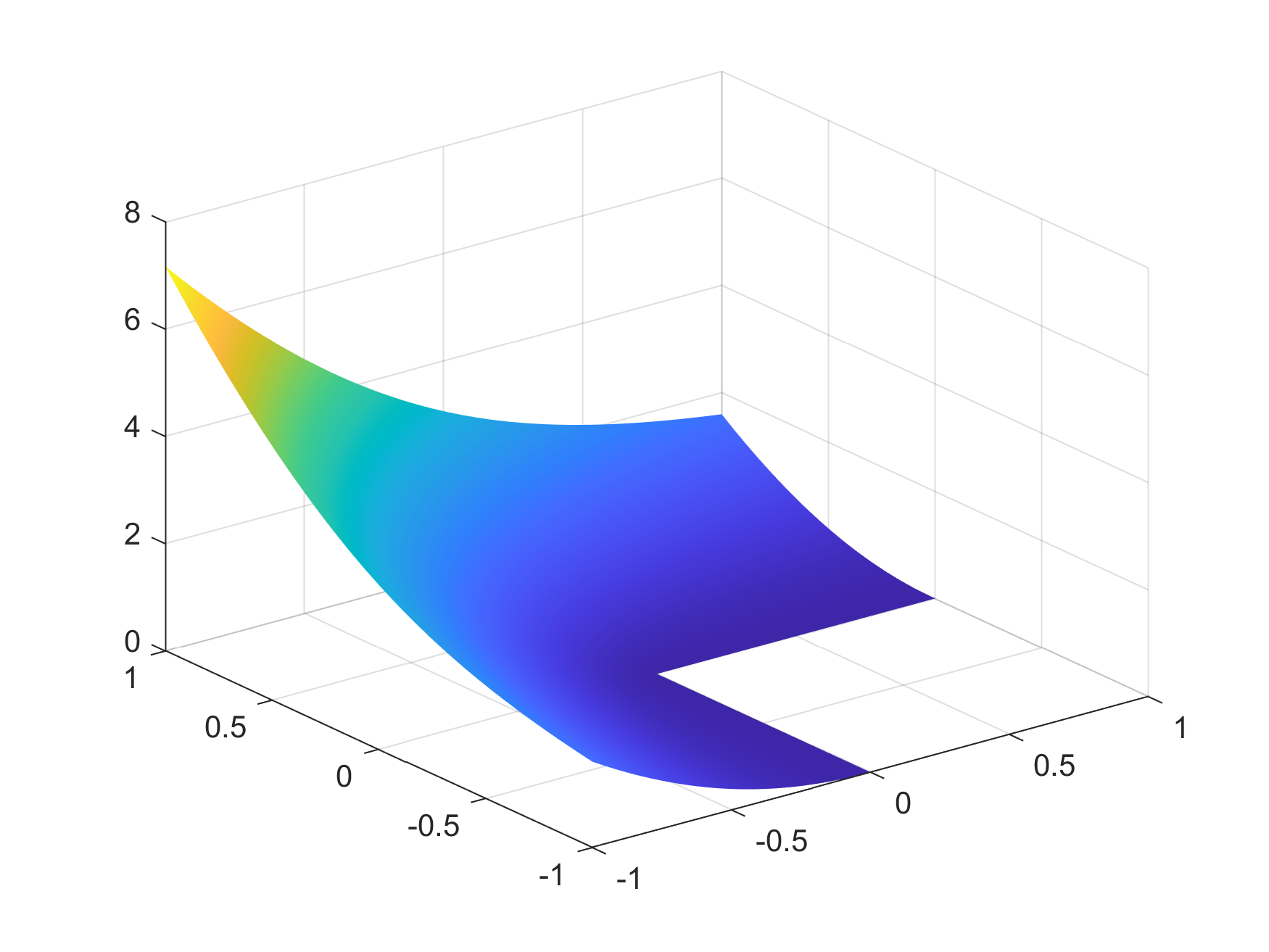} 
\label{fig:ex3-solution}
}
\end{subfigure}
 \caption{Domain and exact solution of Example~\ref{ex:Example3}. }
 \label{fig:domains2}
 \end{center}
\end{figure}

In the test the initial mesh has $4 \times 4$ elements on each patch, and we run the adaptive method until the hierarchical space reaches twelve levels. In Fig.~\ref{fig:bilaplacianex3} (right), where we plot the obtained errors in {\RV the} $H^2$ semi-norm, {\RV the advantage of using local refinement over the uniform one becomes clear}, regardless of the employed basis. In the plot the convergence rate appears to be slightly better than the optimal one, which indicates that the asymptotic regime has not yet been reached, except for degree $p=3$. In Fig.~\ref{fig:bilaplacianex3} (left) we show the hierarchical meshes obtained when solving the problem with the truncated basis and stopping the iterations at six levels, and we see a similar behavior as for the other examples, with some elements refined away from the singularity for degree $p=5$ but without affecting the convergence of the method.

\begin{figure}
\begin{center}
	\begin{subfigure}[Mesh for $p=4$, 1081 NDOF.]{\includegraphics[trim=80mm 10mm 80mm 10mm,width=0.42\textwidth,clip]{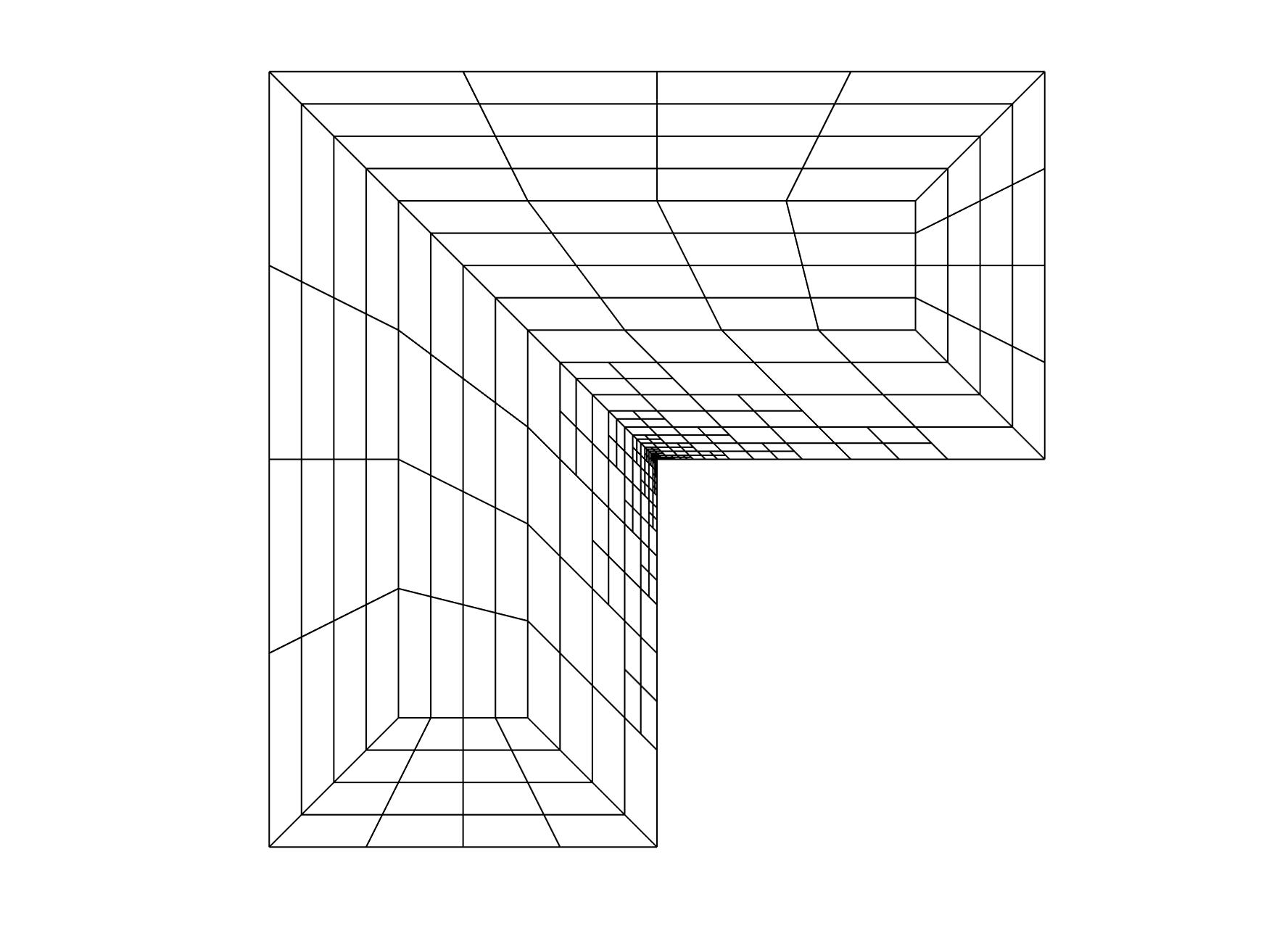}}\end{subfigure}
  \begin{subfigure}[Error for degree 3.]{\includegraphics[trim=5mm 0mm 10mm 5mm,width=0.48\textwidth,clip]{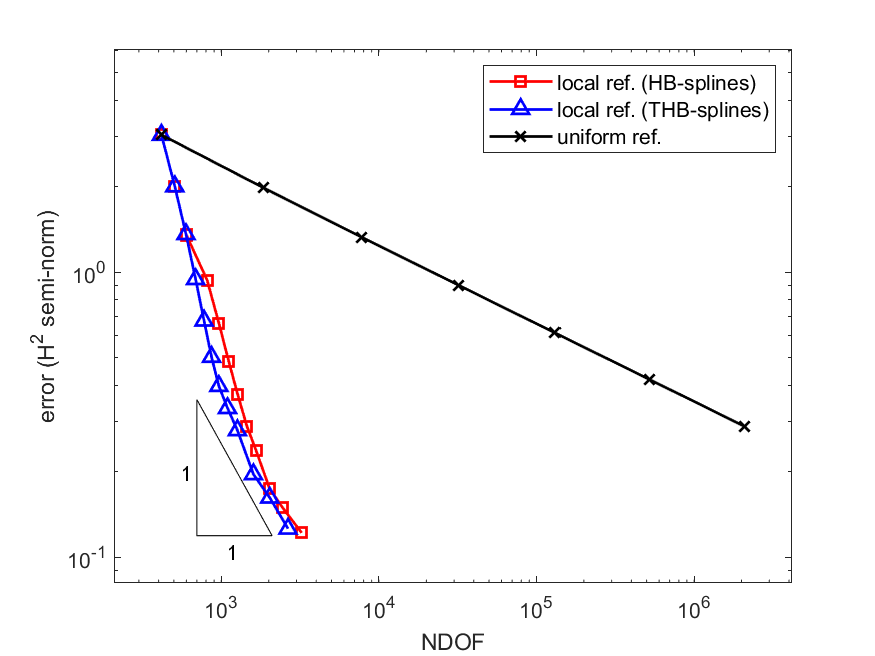}}\end{subfigure}
  \begin{subfigure}[Error for degree 4.]{\includegraphics[trim=5mm 0mm 10mm 5mm,width=0.48\textwidth,clip]{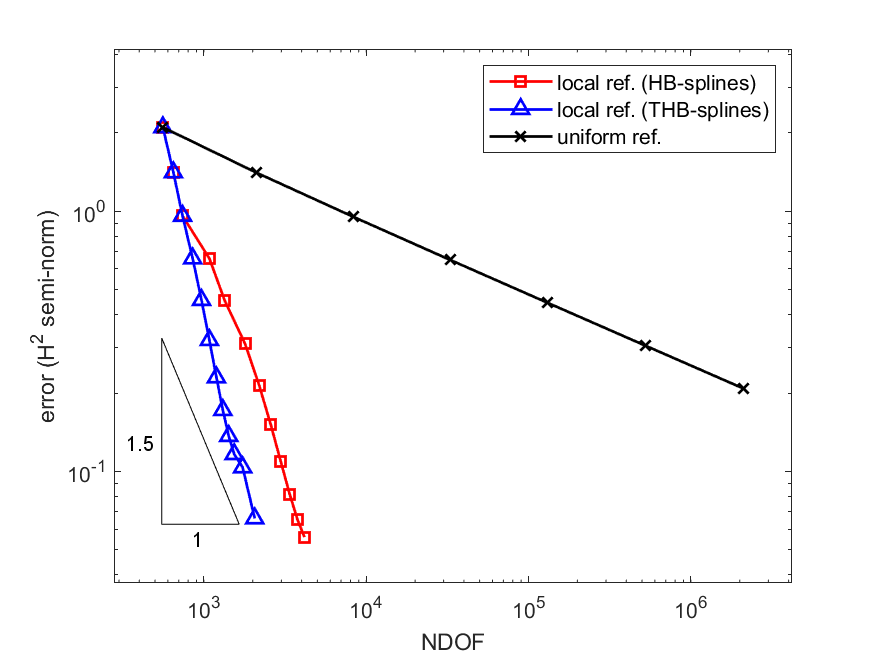}}\end{subfigure}
  \begin{subfigure}[Error for degree 5.]{\includegraphics[trim=5mm 0mm 10mm 5mm,width=0.48\textwidth,clip]{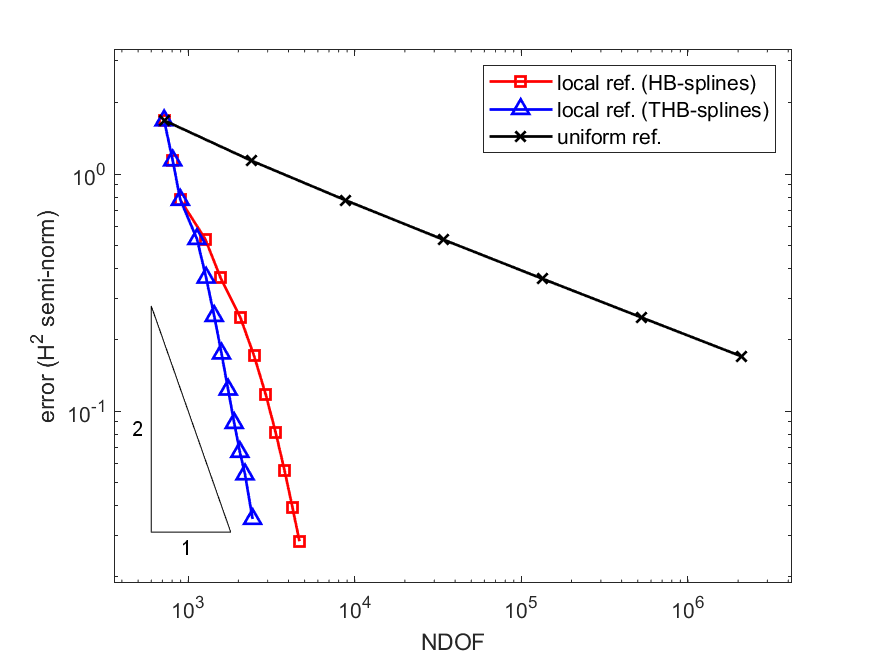}}\end{subfigure}
 \caption{Example~\ref{ex:Example3}: a ${\cal T}$-admissible mesh with six levels, and convergence for degrees 3, 4 and 5.}
 \label{fig:bilaplacianex3}
 \end{center}
\end{figure}


\end{example}

\section{Conclusions} \label{sec:conclusions}
We developed an adaptive isogeometric method for solving PDEs over planar analysis-suitable $G^1$ multi-patch geometries with $C^1$ hierarchical splines. Since the $C^1$ spline spaces on one level lack local linear independence, as we demonstrated on the basis of an example, we analyzed the hierarchical spline construction under relaxed assumptions and proved that linear independence of the {\RV set of hierarchical splines} can still be obtained.

The design of the adaptive method involved the investigation of several properties of the $C^1$ isogeometric spline space of each level, to guarantee that the relaxed assumptions are satisfied. This comprises its detailed characterization, the local linear independence of particular subsets of basis functions, as well as the refinement masks between two consecutive levels of refinement. In addition, we proved key properties of the resulting $C^1$ hierarchical spline space and its associated basis such as nestedness on refined meshes and, under a mild assumption on the mesh near the vertices, linear independence of the {\RV set of hierarchical splines that form the basis}. We presented a refinement algorithm with linear complexity, which guarantees the construction of graded hierarchical meshes that fulfill the condition for linear independence. Finally, the potential of the adaptive scheme was demonstrated by solving the Poisson problem as well as the biharmonic problem over different planar analysis-suitable $G^1$ multi-patch parameterizations, where the numerical results indicated {\RV that the $C^1$ basis with the presented refinement algorithm improve the convergence with respect to uniform refinement.}

In future work, we plan to extend our adaptive isogeometric spline method to the case of analysis-suitable $G^1$ multi-patch surfaces as well as to the application of further fourth order PDEs such as the Kirchhoff-Love shell problem \cite{kiendl-bletzinger-linhard-09}. From a more theoretical perspective, we plan to analyze the convergence properties of the adaptive method, for which it is first necessary to study the approximation properties of the (hierarchical) $C^1$ spline spaces.

\appendix
\section{Definitions for the computation of $C^1$ basis functions} \label{sec:appendix}
For the sake of completeness, we present in this appendix further definitions that are necessary to define and compute the basis functions of Section~\ref{sec:multipatch}, and therefore also for the hierarchical basis.

\subsection{Modified univariate basis functions} \label{sec:modified_basis}
The modified basis functions~$\UM{p}{r}{j}$, for $j=0,1$, $\UM{p}{r+1}{j}$, for $j=0,1,2$, and 
$\UM{p-1}{r}{j}$, for $j=0,1$, are given by
\begin{equation*} 
\begin{array}{ll}
\displaystyle \UM{p}{r}{0}(\xi) = \sum_{j=0}^{1}\UN{p}{r}{j}(\xi), & \displaystyle \UM{p}{r}{1}(\xi) = \frac{1}{p(k+1)}\UN{p}{r}{1}(\xi), \\
\displaystyle \UM{p}{r+1}{0}(\xi) = \sum_{j=0}^{2}\UN{p}{r+1}{j}(\xi), & \displaystyle  \UM{p}{r+1}{1}(\xi) = \frac{1}{p(k+1)} \sum_{j=1}^{2}A(j)\UN{p}{r+1}{j}(\xi),  \\
\displaystyle \UM{p}{r+1}{2}(\xi)= \frac{B}{p(p-1)(k+1)^2} \UN{p}{r+1}{2}(\xi),
\end{array}
\end{equation*}
with $A(j)=j$ and $B=1$ for $r <p-2$, and $A(j)=2j-1$ and $B=2$ for $r=p-2$, and
\begin{equation*} 
 \UM{p-1}{r}{0}(\xi) = \sum_{j=0}^{1}\UN{p-1}{r}{j}(\xi), \quad \UM{p-1}{r}{1}(\xi) = \frac{1}{(p-1)(k+1)}\UN{p-1}{r}{1}(\xi).
\end{equation*}

\subsection{Computation of gluing data} \label{sec:gluing_data}
For analysis-suitable $G^1$ multi-patch parameterizations, the linear functions~$\alpha^{(i,0)}$ and $\alpha^{(i,1)}$ and the quadratic function $\beta^{(i)}$ are uniquely 
determined up to a common function~$\gamma^{(i)}$ (with $\gamma^{(i)}(\xi) \neq 0$) via 
\begin{equation*}
   \begin{aligned} \label{eq:gluing_functions}
  	\alpha^{(i,0)} (\xi)  & = \gamma^{(i)} (\xi)  \det  \left [\begin{array}{ll}
          \Du \f{F}^{(i_0)}(0,\xi) & 
        \Dv \f{F}^{(i_0)}(0,\xi) 
         \end{array}\right ],\\
 	\alpha^{(i,1)} (\xi)  & =  \gamma^{(i)} (\xi)  \det  \left [\begin{array}{ll}
         \Du \f{F}^{(i_1)}(\xi,0) & 
        \Dv \f{F}^{(i_1)}(\xi,0) 
        \end{array}\right ],\\
 \beta^{(i)} (\xi) & =  \gamma^{(i)} (\xi) \det  \left [\begin{array}{ll}
         \Dv \f{F}^{(i_1)}(\xi,0)  & 
        \Du \f{F}^{(i_0)}(0,\xi) 
         \end{array}\right ],
   \end{aligned}
  \end{equation*}
and there always exist (non-unique) linear functions $\beta^{(i,0)}$ and $\beta^{(i,1)}$ such that~\eqref{eq:beta} holds, see \cite{CoSaTa16}. To uniquely 
determine the linear functions~$\alpha^{(i,0)}$, $\alpha^{(i,1)}$, $\beta^{(i,0)}$ and $\beta^{(i,1)}$ for each inner edge~$\Sigma^{(i)}$, we assume that 
they are selected by minimizing the terms
\[
 ||\alpha^{(i,0)}-1 ||^{2}_{L_{2}([0,1])} +  ||\alpha^{(i,1)} -1 ||^{2}_{L_{2}([0,1])}
\]
and
\[
  ||\beta^{(i,0)} ||^{2}_{L_{2}([0,1])} +  ||\beta^{(i,1)} ||^{2}_{L_{2}([0,1])},
\]
see \cite{KaSaTa19a}. For each boundary edge~$\Sigma^{(i)}$, $i \in \indSigmaB$, we can simply assign trivial functions~$\alpha^{(i,0)} \equiv 1$ and 
$\beta^{(i,0)} \equiv 0$. 

\subsection{Functions involved in the definition of edge and vertex basis functions} \label{sec:function-details}

The functions appearing in the definition of edge basis functions of Section~\ref{sec:edge_functions} are given by
\begin{equation} \label{eq:func_trace}
  \begin{aligned}
 \fSigma_{(j_1,0)}^{(i,0)}(\xi_1,\xi_2)  & = \UN{p}{r+1}{j_1}(\xi_2)\UM{p}{r}{0}(\xi_1) - \beta^{(i,0)}(\xi_2)(\UN{p}{r+1}{j_1})'(\xi_2)\UM{p}{r}{1}(\xi_1), \\
 \fSigma_{(j_1,0)}^{(i,1)}(\xi_1,\xi_2)  & = \UN{p}{r+1}{j_1}(\xi_1)\UM{p}{r}{0}(\xi_2) - \beta^{(i,1)}(\xi_1)(\UN{p}{r+1}{j_1})'(\xi_1)\UM{p}{r}{1}(\xi_2),
  \end{aligned} 
\end{equation}
and
\begin{equation} \label{eq:func_der}
 \begin{aligned}
  \fSigma_{(j_1,1)}^{(i,0)}(\xi_1,\xi_2) & =  \alpha^{(i,0)}(\xi_2)\UN{p-1}{r}{j_1}(\xi_2)\UN{p}{r}{1}(\xi_1), \\
  \fSigma_{(j_1,1)}^{(i,1)}(\xi_1,\xi_2) & = -\alpha^{(i,1)}(\xi_1)\UN{p-1}{r}{j_1}(\xi_1)\UN{p}{r}{1}(\xi_2).
 \end{aligned}
\end{equation}
Note that the expression is greatly simplified for boundary edges, first because only the patch $i_0$ must be considered, and second because one can use the values $\alpha^{(i,0)} \equiv 1$ and $\beta^{(i,0)} \equiv 0$.

The functions $g_{\mathbf{j}}^{(i,m,\rm{prec})}$ and $g_{\mathbf{j}}^{(i,m,\rm{next})}$, appearing in the definition of vertex basis functions of Section~\ref{sec:vertex_functions}, are respectively given by
\begin{equation} \label{eq:func_g1}
 \begin{aligned}
 g_{\mathbf{j}}^{(i,m, \rm{next})}(\xi_1,\xi_2) & = \sum_{w=0}^{2} \coefc_{\mathbf{j},w}^{(i_{m+1})} \left(\UM{p}{r+1}{w}(\xi_2)\UM{p}{r}{0}(\xi_1) - 
 \beta^{(i_{m+1},0)}(\xi_2)(\UM{p}{r+1}{w})'(\xi_2)\UM{p}{r}{1}(\xi_1) \right) \\
 & + \sum_{w=0}^{1} \coefd_{\mathbf{j},w}^{(i_{m+1})} \alpha^{(i_{m+1},0)}(\xi_2) \UM{p-1}{r}{w}(\xi_2)\UM{p}{r}{1}(\xi_1),
 \end{aligned}
 \end{equation}
 \begin{equation} \label{eq:func_g2}
 \begin{aligned}
 g_{\mathbf{j}}^{(i,m,\rm{prec})}(\xi_1,\xi_2)& = \sum_{w=0}^{2} \coefc_{\mathbf{j},w}^{(i_{m})} \left(\UM{p}{r+1}{w}(\xi_1)\UM{p}{r}{0}(\xi_2) - 
 \beta^{(i_{m},1)}(\xi_1)(\UM{p}{r+1}{w})'(\xi_1)\UM{p}{r}{1}(\xi_2) \right) \\
 & - \sum_{w=0}^{1} \coefd_{\mathbf{j},w}^{(i_{m})} \alpha^{(i_{m},1)}(\xi_1) \UM{p-1}{r}{w}(\xi_1)\UM{p}{r}{1}(\xi_2),
 \end{aligned}
 \end{equation}
with the coefficients $\coefc^{(k)}_{\mathbf{j},w}$ and $\coefd^{(k)}_{\mathbf{j},w}$, for $k = i_m,i_{m+1}$, given by
\begin{align*}
&  \coefc_{\mathbf{j},0}^{(k)} = {\RV \delta_{0 j_1}}{\RV \delta_{0 j_2}}, \quad
 \coefc_{\mathbf{j},1}^{(k)} = \mathbf{b}^\delta_{\mathbf{j}} \cdot \f t^{(k)}(0),  \quad  
 \coefc_{\mathbf{j},2}^{(k)} = (\f{t}^{(k)}(0))^T  \; A^\delta_{\mathbf{j}} \; \f{t}^{(k)}(0) + \mathbf{b}^\delta_{\mathbf{j}} \cdot (\f t^{(k)})'(0), \\
& \coefd_{\mathbf{j},0}^{(k)} = \mathbf{b}^\delta_{\mathbf{j}} \cdot \f{d}^{(k)}(0), \quad
 \coefd_{\mathbf{j},1}^{(k)} = (\f{t}^{(k)}(0))^T \; A^\delta_{\mathbf{j}} \; \f{d}^{(k)}(0) + \mathbf{b}^\delta_{\mathbf{j}} \cdot(\f{d}^{(k)})'(0), 
\end{align*}
and for each $\mathbf{j} \in \JChi$ we use the auxiliary matrix and vector
\begin{equation*}
  A^\delta_{\mathbf{j}} = 
    \left(\begin{array}{cc}
            {\RV \delta_{2 j_1} \delta_{0 j_2}} & {\RV \delta_{1 j_1} \delta_{1 j_2}} \\
            {\RV \delta_{1 j_1}\delta_{1j_2}} & {\RV \delta_{0 j_1}\delta_{2 j_2}}
           \end{array} 
    \right) \; \text{ and } \; 
\mathbf{b}^\delta_{\mathbf{j}} = ({\RV \delta_{1 j_1} \delta_{0 j_2},\delta_{0 j_1}\delta_{1 j_2}}).
\end{equation*}
Denoting ${\bf w} = (w_1, w_2)$, the remaining function $h_{\mathbf{j}}^{(i,m)}$ is given by 
\begin{equation} \label{eq:func_g3}
 h_{\mathbf{j}}^{(i,m)}(\xi_1,\xi_2) = \sum_{w_1=0}^{1} \sum_{w_2=0}^{1} {\coefe}^{(i_m)}_{\mathbf{j},\mathbf{w}} \UM{p}{r}{w_1}(\xi_1)\UM{p}{r}{w_2}(\xi_2),
\end{equation}
with the coefficients $\coefe_{\mathbf{j},\mathbf{w}}^{(i_m)}$ defined as
\begin{align*}
&  \coefe_{\mathbf{j},(0,0)}^{(i_m)} = \delta_0^{j_1}\delta_0^{j_2},\mbox{ } 
 \coefe_{\mathbf{j},(1,0)}^{(i_m)}= \mathbf{b}^\delta_{\mathbf{j}} \cdot \f t^{(i_{m})}(0),\mbox{ } 
 \coefe_{\mathbf{j},(0,1)}^{(i_m)} = \mathbf{b}^\delta_{\mathbf{j}} \cdot \f t^{(i_{m+1})}(0), \\
& \coefe_{\mathbf{j},(1,1)}^{(i_m)} = (\f{t}^{(i_{m})}(0))^T  \; A^\delta_{\mathbf{j}} \; \f{t}^{(i_{m+1})}(0) + \mathbf{b}^\delta_{\mathbf{j}} \cdot \Du \Dv \f{F}^{(i_m)}(0,0).
\end{align*}
Note that $\coefe_{\mathbf{j},(0,0)}^{(i_m)} = \coefc_{\mathbf{j},0}^{(i_m)} = \coefc_{\mathbf{j},0}^{(i_{m+1})}$, and that $\coefe_{\mathbf{j},(1,0)}^{(i_m)} = \coefc_{\mathbf{j},1}^{(i_{m})}$ and $\coefe_{\mathbf{j},(0,1)}^{(i_m)} = \coefc_{\mathbf{j},1}^{(i_{m+1})}$.

\subsection{Computation of matrices for representation in terms of B-splines} \label{sec:computations}

%




By replacing the modified basis functions with their definitions from Appendix~\ref{sec:modified_basis}
in the expressions \eqref{eq:func_g1}, and putting in evidence the expression of the edge functions in \eqref{eq:func_trace} and \eqref{eq:func_der}, we obtain the explicit representation of $g_{\mathbf{j}}^{(i,m,\rm{prec})}$ in terms of B-splines:
\begin{align*}
 &g_{\mathbf{j}}^{(i,m,\rm{prec})}
 = 
c_{\mathbf{j},0}^{(i_{m})} \sum_{j=0}^2 \fSigma_{(j,0)}^{(i_m,1)}
+ \frac{c_{\mathbf{j},1}^{(i_{m})}}{p(k+1)} \sum_{j=1}^2 A(j) \fSigma_{(j,0)}^{(i_m,1)}
\\
&+ \frac{B c_{\mathbf{j},2}^{(i_{m})}}{p(p-1)(k+1)^2} \fSigma_{(2,0)}^{(i_m,1)}
+ \frac{d_{\mathbf{j},0}^{(i_{m})}}{p(k+1)} \sum_{j=0}^{1} \fSigma_{(j,1)}^{(i_m,1)}
+ \frac{d_{\mathbf{j},1}^{(i_{m})}}{p(p-1)(k+1)^2} \fSigma_{(1,1)}^{(i_m,1)}
\\
&= c_{\mathbf{j},0}^{(i_{m})}\fSigma_{(0,0)}^{(i_m,1)}
+\left(c_{\mathbf{j},0}^{(i_{m})}+\frac{A(1)c_{\mathbf{j},1}^{(i_{m})}}{p(k+1)}\right)\fSigma_{(1,0)}^{(i_m,1)}
\\
&+\left(c_{\mathbf{j},0}^{(i_{m})}+\frac{A(2)c_{\mathbf{j},1}^{(i_{m})}}{p(k+1)}+\frac{Bc_{\mathbf{j},2}^{(i_{m})}}{p(p-1)(k+1)^2}\right)\fSigma_{(2,0)}^{(i_m,1)}
+ \frac{d_{\mathbf{j},0}^{(i_{m})}}{p(k+1)}\fSigma_{(0,1)}^{(i_m,1)}
\\
&+\left(\frac{d_{\mathbf{j},0}^{(i_{m})}}{p(k+1)}+\frac{d_{\mathbf{j},1}^{(i_{m})}}{p(p-1)(k+1)^2}\right)\fSigma_{(1,1)}^{(i_m,1)}
.
\end{align*}
In a completely analogous fashion, starting from \eqref{eq:func_g2} we obtain that
\begin{align*}
 &g_{\mathbf{j}}^{(i,m,\rm{next})}
= c_{\mathbf{j},0}^{(i_{m+1})}\fSigma_{(0,0)}^{(i_{m+1},0)}
+\left(c_{\mathbf{j},0}^{(i_{m+1})}+\frac{A(1)c_{\mathbf{j},1}^{(i_{m+1})}}{p(k+1)}\right)\fSigma_{(1,0)}^{(i_{m+1},0)}
\\
&+\left(c_{\mathbf{j},0}^{(i_{m+1})}+\frac{A(2)c_{\mathbf{j},1}^{(i_{m+1})}}{p(k+1)}+\frac{Bc_{\mathbf{j},2}^{(i_{m+1})}}{p(p-1)(k+1)^2}\right)\fSigma_{(2,0)}^{(i_{m+1},0)}
+ \frac{d_{\mathbf{j},0}^{(i_{m+1})}}{p(k+1)}\fSigma_{(0,1)}^{(i_{m+1},0)}
\\
&+\left(\frac{d_{\mathbf{j},0}^{(i_{m+1})}}{p(k+1)}+\frac{d_{\mathbf{j},1}^{(i_{m+1})}}{p(p-1)(k+1)^2}\right)\fSigma_{(1,1)}^{(i_{m+1},0)}
.
\end{align*}
From these expressions we get the matrices $K_{i,{m}}$ and $K_{i,{m+1}}$ of Section \ref{subsec:standard_representation}, that we wrote there replacing $A(j)$ and $B$ with their particular values for $r=p-2$.

Similarly, replacing the expression of the modified univariate basis functions in \eqref{eq:func_g3}, we have
\begin{align*}
&h_{\mathbf{j}}^{(i,m)}
= 
%
%
{\coefe}^{(i_m)}_{\mathbf{j},(0,0)} 
{\MK \UN{p}{r}{(0,0)} }
+ \left({\coefe}^{(i_m)}_{\mathbf{j},(0,0)} + \frac{{\coefe}^{(i_m)}_{\mathbf{j},(1,0)}}{p(k+1)}\right) 
{\MK \UN{p}{r}{(1,0)}}
+ \left({\coefe}^{(i_m)}_{\mathbf{j},(0,0)}+\frac{{\coefe}^{(i_m)}_{\mathbf{j},(0,1)}}{p(k+1)}\right) 
{\MK \UN{p}{r}{(0,1)}}\\
&+\left({\coefe}^{(i_m)}_{\mathbf{j},(0,0)}+\frac{{\coefe}^{(i_m)}_{\mathbf{j},(0,1)}}{p(k+1)}+\frac{{\coefe}^{(i_m)}_{\mathbf{j},(1,0)}}{p(k+1)}+\frac{{\coefe}^{(i_m)}_{\mathbf{j},(1,1)}}{p^2(k+1)^2}\right) 
{\MK \UN{p}{r}{(1,1)}},
\end{align*}
from which we get, using the relations between $c_{\mathbf{j}}$ and $e_{\mathbf{j}}$ in Appendix~\ref{sec:function-details}, the matrix $V_{i,{m}}$ of Section \ref{subsec:standard_representation}.

{\RV
\section{Proof of the triangular inequality for the distance} \label{app:distance}

\begin{lemma} \label{lemma:descendants}
Let $Q, Q' \in G^\ell$, with $\dist(Q, Q') = s 2^{-\ell}$. For any descendants $Q_d, Q_d' \in G^{\ell+k}$, with $Q_d \subset Q, Q_d' \subset Q'$ and $k>0$, it holds that
\[
2^{-(\ell+k)} (2^k(s-1)+1) \le \dist(Q_d, Q_d') \le 2^{-(\ell+k)} (2^k(s+1) - 1).
\]
Moreover, for any such $Q_d$ there exists $Q_d' \subset Q'$ of the same level such that $\dist(Q, Q') = \dist(Q_d, Q_d')$.
\end{lemma}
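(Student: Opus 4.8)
The plan is to read $\Pi^{r}(Q)$ combinatorially: by the characterization stated just before the lemma, for $Q_1,Q_2\in G^{\ell}$ one has $Q_2\in\Pi^{r}(Q_1)$ exactly when $Q_2$ is reachable from $Q_1$ in at most $r$ steps, a step joining two elements of $G^{\ell}$ whose closures meet. Thus $2^{\ell}\dist(\cdot,\cdot)$ is the graph distance in the adjacency graph of $G^{\ell}$. Inside a single element refined dyadically $k$ times, its $2^k\times 2^k$ children form a regular grid on which this graph distance is the Chebyshev distance, so any two children of the same element are at graph distance at most $2^k-1$. I will prove the two inequalities, then the realizability statement, writing $\mathrm{par}(R)\in G^{\ell}$ for the level-$\ell$ element containing a child $R\in G^{\ell+k}$.

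For the upper bound I would lift a shortest coarse chain. Since $\dist(Q,Q')=s\,2^{-\ell}$ there is a chain $Q=P_0,\dots,P_s=Q'$ in $G^{\ell}$ with $\overline{P_{j-1}}\cap\overline{P_j}\neq\emptyset$, and I may choose a common corner $\mathbf{x}_j$ of $P_{j-1}$ and $P_j$ for each $j$. Refining every $P_j$ into its block, I concatenate a fine chain: inside $P_0$ travel from $Q_d$ to the child at $\mathbf{x}_1$, cross the shared corner in one step, inside $P_1$ travel from $\mathbf{x}_1$ to $\mathbf{x}_2$, cross, and so on, finishing inside $Q'$ at $Q_d'$. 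Each of the $s+1$ intra-block legs costs at most $2^k-1$ steps and each of the $s$ crossings costs one step, giving at most $(s+1)(2^k-1)+s=(s+1)2^k-1$ steps; hence $\dist(Q_d,Q_d')\le(2^k(s+1)-1)\,2^{-(\ell+k)}$.

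For the lower bound I would control how fast a fine chain can descend coarse layers. The crux is a step lemma: if $R'\in\Pi^{2^k}(R)$ for $R,R'\in G^{\ell+k}$, then $\mathrm{par}(R')\in\Pi^{1}(\mathrm{par}(R))$ at level $\ell$, i.e. $2^k$ fine layers cross at most one coarse layer. Iterating by induction on $r$ yields that $R'\in\Pi^{r\,2^k}(R)$ forces $\mathrm{par}(R')\in\Pi^{r}(\mathrm{par}(R))$. If it held that $\dist(Q_d,Q_d')\le(s-1)2^k\,2^{-(\ell+k)}$, applying this with $r=s-1$ would give $Q'=\mathrm{par}(Q_d')\in\Pi^{s-1}(Q)$, contradicting the minimality of $s$ in $\dist(Q,Q')=s\,2^{-\ell}$; therefore $\dist(Q_d,Q_d')\ge(2^k(s-1)+1)\,2^{-(\ell+k)}$. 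The main obstacle is proving the step lemma at an extraordinary vertex, where $\nu$ elements meet only at a point and $\Pi$ is not a regular grid: I would settle it by the worst-case count that, from a child adjacent to the vertex, reaching an element two coarse layers away forces crossing the vertex and then traversing a full block before a second crossing, costing at least $1+(2^k-1)+1=2^k+1>2^k$ steps, so that $\Pi^{2^k}$ cannot jump two coarse layers even across such a vertex.

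Finally, for the realizability statement I would note that the lifting above, stopped at the first child lying in $Q'$, reaches some child of $Q'$ in at most $s(2^k-1)+s=s\,2^k$ steps, so the distance from $Q_d$ to the nearest child of $Q'$ is at most $s\,2^k\,2^{-(\ell+k)}=\dist(Q,Q')$; conversely the child of $Q'$ lying Chebyshev-farthest, within $Q'$, from the entry corner $\mathbf{x}_s$ is at distance at least $s\,2^k\,2^{-(\ell+k)}$, by the same layer count bounding from below the traversal a path must perform to reach it. Since passing from a child of $Q'$ to a corner-adjacent child changes $\dist(Q_d,\cdot)$ by at most $2^{-(\ell+k)}$ and the block of children of $Q'$ is connected, a discrete intermediate-value argument produces a child $Q_d'\subset Q'$ with $\dist(Q_d,Q_d')=s\,2^k\,2^{-(\ell+k)}=\dist(Q,Q')$, as claimed; here too the only subtlety is to carry the two bracketing estimates through configurations meeting at an extraordinary vertex.
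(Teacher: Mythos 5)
Your proposal is correct in its claims and overall architecture, but it follows a noticeably different route from the paper, whose own proof is a short extremal-configuration count. For the upper bound you and the paper share the same idea (concatenate intra-block traversals with corner crossings along a shortest coarse chain), but you make it precise by identifying $2^{\ell}\dist(\cdot,\cdot)$ with the graph distance of the adjacency graph and the intra-block cost with the Chebyshev distance on the $2^k\times 2^k$ grid of children; the paper instead just asserts that the extremal case occurs when $Q_d$ and $Q_d'$ sit in opposite corners and counts the $2(2^k-1)$ extra elements. For the lower bound your route is genuinely different: rather than declaring which configuration minimizes the distance, you prove a projection (``step'') lemma --- a fine path of length $2^k$ cannot advance more than one coarse layer --- iterate it, and contradict the minimality of $s$; this is structurally cleaner and localizes all the geometric difficulty into one statement, whereas the paper's count gives no handle on paths that wander through several intermediate coarse elements. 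For the ``moreover'' part the paper places $Q_d'$ ``in the same relative position up to rotations'', which simultaneously realizes the value $s2^k$; your discrete intermediate-value argument avoids having to make sense of ``relative position'' across an extraordinary vertex, at the price of needing the two brackets. The one spot where your sketch is weakest is the lower bracket: the claim that the Chebyshev-farthest child of $Q'$ from the entry corner is at distance at least $s2^k2^{-(\ell+k)}$ is justified by adding the traversal of $Q'$ to the cost of reaching $Q'$, but a shortest path to that far child need not pass through a near child of $Q'$ first, so this needs either a strengthened step lemma or the paper's translation/relative-position trick. Both your step lemma at extraordinary vertices and this bracket are left at the level of a worst-case count --- which is, to be fair, exactly the level of rigor of the paper's own proof.
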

\begin{proof}
We first note that, from the definition of the distance and the regions $\Pi^s(Q)$, there exists a sequence of elements $\{Q_j\}_{j=0}^s \subset G^\ell$ such that $Q_0 = Q$, $Q_s = Q'$ and $Q_{j+1} \subset \Pi^j(Q_0)$. The minimum distance between two descendants is obtained when $Q_d$ is adjacent to $Q_1$ and $Q_d'$ is adjacent to $Q_{s-1}$. Since every element in $G^\ell$ is refined into $2^k\times 2^k$ elements of level $\ell+k$, the number of elements of level $\ell+k$ between the descendants will be $2^k(s-1)$, and therefore the minimum distance is
\[
2^{-(\ell+k)} (2^k(s-1)+1) \le \dist(Q_d, Q_d').
\]
Similarly, the maximum distance will be obtained when $Q_d$ and $Q_d'$ are in corners respectively opposite to elements of $Q_1$ and $Q_{s-1}$. In this case, we have $2 (2^k-1)$ additional elements between them (half contained in $Q$ and half in $Q'$), and the distance is bounded by
\[
\dist(Q_d, Q_d') \le 2^{-(\ell+k)} (2^k(s+1) - 1).
\]
To prove the second statement, it is sufficient to choose $Q_d'$ with respect to $Q'$ in the same relative position of $Q_d$ with respect to $Q$ (up to possible rotations).
\end{proof}

\begin{proposition}
Let $Q \in G^\ell, Q' \in G^{\ell'}$ and $Q'' \in G^{\ell''}$, with arbitrary levels $\ell, \ell', \ell''$. Then, it holds that
\[
\dist(Q, Q') \le \dist(Q, Q'') + \dist(Q'', Q').
\]
\end{proposition}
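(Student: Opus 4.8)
The plan is to reduce the multilevel inequality to the triangle inequality for elements that all lie on a single level, which is elementary, and to move between levels using only the exact \emph{achieving descendant} half of Lemma~\ref{lemma:descendants} rather than its (non-sharp) two-sided bounds. First I would record the same-level case: for $A,B,C\in G^m$ the quantity $2^m\dist(A,B)=\min\{r:B\subset\Pi^r(A)\}$ is precisely the graph distance of $A$ and $B$ in the adjacency graph whose edges join elements whose closures meet, and a graph distance satisfies the triangle inequality by concatenating connecting chains. Hence $\dist(A,B)\le\dist(A,C)+\dist(C,B)$ whenever $A,B,C$ share a level $m$.

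Since both sides of the claimed inequality are unchanged under exchanging $Q$ and $Q'$, I would assume without loss of generality that $\ell\ge\ell'$ and carry out the whole argument on level $\ell$. By the definition of the distance there is a descendant $\bar Q'\subset Q'$ with $\bar Q'\in G^\ell$ and $\dist(Q,Q')=\dist(Q,\bar Q')$, so the left-hand side is already a same-level distance at level $\ell$. The core of the proof is then to produce a level-$\ell$ element $R$ representing $Q''$ and satisfying the two bounds $\dist(Q,R)\le\dist(Q,Q'')$ and $\dist(R,\bar Q')\le\dist(Q'',Q')$. Once this is achieved, the same-level triangle inequality at level $\ell$ gives $\dist(Q,Q')=\dist(Q,\bar Q')\le\dist(Q,R)+\dist(R,\bar Q')\le\dist(Q,Q'')+\dist(Q'',Q')$, which is the assertion.

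The construction of $R$ splits according to whether $Q''$ is coarser or finer than level $\ell$. If $\ell''\le\ell$, I take $R$ to be a level-$\ell$ descendant $\bar Q''\subset Q''$; then $\dist(Q,R)\le\dist(Q,Q'')$ holds for every such descendant, since $\dist(Q,Q'')$ is by definition the maximum of $\dist(Q,\cdot)$ over the level-$\ell$ descendants of $Q''$. I fix the particular $\bar Q''$ by invoking the existence statement of Lemma~\ref{lemma:descendants} on a coarse pair obtained from $Q''$ and from an intermediate ancestor/descendant of $\bar Q'$ inside $Q'$ (the precise pair being the level-$\ell''$ ancestor of $\bar Q'$ when $Q''$ is finer than $Q'$, and the level-$\ell'$ achiever of $\dist(Q'',Q')$ inside $Q''$ when $Q''$ is coarser than $Q'$), so that $\dist(R,\bar Q')$ equals a same-level distance dominated by $\dist(Q'',Q')$. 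If instead $\ell''>\ell$, I take $R$ to be the unique ancestor of $Q''$ in $G^\ell$; here \emph{both} bounds follow from the achiever property, applied once to the pair $(R,Q)$ and once to the pair $(R,\bar Q')$: each application produces a level-$\ell''$ descendant, of $Q$ respectively of $\bar Q'\subset Q'$, whose same-level distance to $Q''$ equals $\dist(Q,R)$ respectively $\dist(R,\bar Q')$, and which is therefore bounded by the corresponding maximum $\dist(Q,Q'')$ respectively $\dist(Q'',Q')$.

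The step I expect to be the main obstacle is exactly this transfer between levels. The distance is genuinely not monotone under refinement and is not preserved when one lifts both elements of a pair to a common finer level, since lifting both overshoots by the $+1$ slack visible in the upper bound of Lemma~\ref{lemma:descendants}; consequently a naive reduction that pushes all three elements to $\max(\ell,\ell',\ell'')$ fails. The resolution is to refine only the coarser element of each pair, and to use the exact equality $\dist(Q_d,Q_d')=\dist(Q,Q')$ supplied by the achiever half of Lemma~\ref{lemma:descendants}, which matches a chosen descendant on one side with a descendant on the other without losing that slack. Keeping careful track of which element is refined in each of the three distances, and of the ancestor/descendant bookkeeping in the two cases above, is the only delicate part.
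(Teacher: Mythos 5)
Your proof is correct and follows essentially the same route as the paper's: both reduce everything to the single-level triangle inequality (which you justify, slightly more explicitly, as a graph distance) by means of the exact ``achieving descendant'' statement of Lemma~\ref{lemma:descendants}, with a case analysis on the relative levels. The only deviation is organizational: where the paper lifts the comparison to level $\ell''$ when $Q''$ is the finest element (its cases 3 and 5), you instead stay at level $\ell$ and use the level-$\ell$ ancestor of $Q''$, which is a valid and equally economical variant of the same idea.
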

\begin{proof}
We assume, without loss of generality, that $\ell \ge \ell'$, and prove the result case by case. The idea is to always use descendants of the finest level.

1) If $\ell = \ell' = \ell''$, the result is trivial, by the definition of the distance and the regions $\Pi^s$.

2) If $\ell = \ell' > \ell''$, from point 1) the result is true for any descendant $Q_d'' \in G^\ell$, $Q_d'' \subset{Q''}$, and by definition of the distance \eqref{eq:distance_multilevel} it is true for $Q''$.

3) If $\ell = \ell' < \ell''$, from Lemma~\ref{lemma:descendants} there exist $Q_d \subset Q, Q_d' \subset Q', Q_d, Q_d' \in G^{\ell''}$, with $\dist(Q,Q') = \dist(Q_d, Q_d')$, and the result follows again from point 1) and the definition of the distance \eqref{eq:distance_multilevel}.

In the next cases, $\ell > \ell'$ and by definition there exists $Q_d' \subset Q'$, $Q_d' \in G^\ell$ such that $\dist(Q, Q_d') = \dist(Q, Q')$.

4) If $\ell  > \ell' \ge \ell''$, by definition there exists $Q_{d_{\ell'}}'' \in G^{\ell'}$ such that $\dist(Q', Q_{d_{\ell'}}'') = \dist(Q', Q'')$. Moreover, from Lemma~\ref{lemma:descendants} there exists $Q_{d_{\ell}}'' \in G^\ell$ such that $\dist(Q_d', Q_{d_{\ell}}'') = \dist(Q', Q_{d_{\ell'}}'') = \dist(Q', Q'')$. Thus, using first point 1), and then these equalities and the definition of the distance \eqref{eq:distance_multilevel}, we have
\[
\dist(Q, Q') = \dist(Q, Q_d') \le \dist(Q, Q_{d_{\ell}}'') + \dist(Q_{d_{\ell}}'', Q_d') \le \dist(Q, Q'') + \dist(Q'', Q').
\]

5) If $\ell'' \ge \ell > \ell'$, from Lemma~\ref{lemma:descendants} there exist $Q_d \in G^{\ell''}, Q'_{d_{\ell''}} \in G^{\ell''}$ such that $\dist(Q_d, Q'_{d_{\ell''}}) = \dist (Q, Q_d') = \dist(Q, Q')$. With the same arguments as for the previous point, we have
\[
\dist(Q, Q') 
\le \dist (Q_d, Q'') + \dist (Q'', Q'_{d_{\ell''}}) \le \dist(Q, Q'') + \dist (Q'', Q').
\]

6) If $\ell > \ell'' > \ell'$, let $Q'_{d_{\ell''}} \in G^{\ell''}$ be the ancestor of the element $Q_d'$ defined before point 4). From Lemma~\ref{lemma:descendants} there exists $Q_d'' \in G^\ell$ with $\dist(Q_d'', Q_d') = \dist(Q'', Q'_{d_{\ell''}})$. With the same arguments as above, we have
\begin{align*}
& \dist(Q, Q') = \dist (Q, Q_d') \le \dist(Q, Q_d'') + \dist(Q_d'', Q_d') \\
& \le \dist(Q, Q'') + \dist(Q'', Q'_{d_{\ell''}}) \le \dist(Q, Q'') + \dist(Q'', Q'),
\end{align*}
and the proof is finished.

\end{proof}
}

\section*{Acknowledgment}
M. Kapl has been partially supported by the Austrian Science Fund (FWF) through the project~P~33023-N. R. V\'azquez has been partially supported by the Swiss National Science Foundation via the project n.200021\_188589 and by the ERC AdG project CHANGE n.694515. 
C. Bracco and C. Giannelli acknowledge the contribution of the National Recovery and Resilience Plan, Mission 4 Component 2 - Investment 1.4 -  CN\_00000013 CENTRO NAZIONALE "HPC, BIG DATA E QUANTUM COMPUTING", spoke 6.
These supports are gratefully acknowledged. C. Bracco, C. Giannelli and R. V\'azquez are members of the INdAM research group GNCS. The INdAM support through GNCS and the project SUNRISE is gratefully acknowledged.

\bibliographystyle{plain}
\bibliography{author}

\begin{thebibliography}{10}

\bibitem{AnBuCo20}
P.~Antolin, A.~Buffa, and L.~Coradello.
\newblock A hierarchical approach to the a posteriori error estimation of
  isogeometric {K}irchhoff plates and {K}irchhoff-{L}ove shells.
\newblock {\em Comput. Methods Appl. Mech. Engrg.}, 363:112919, 2020.

\bibitem{ApBrWuBl15}
A.~Apostolatos, M.~Breitenberger, R.~W{\"u}chner, and K.-U. Bletzinger.
\newblock Domain decomposition methods and {Kirchhoff-Love} shell multipatch
  coupling in isogeometric analysis.
\newblock In B.~J{\"u}ttler and B.~Simeon, editors, {\em Isogeometric Analysis
  and Applications 2014}, pages 73--101. Springer, 2015.

\bibitem{ABBLRS-stream}
F.~Auricchio, L.~{Beir{\~a}o da Veiga}, A.~Buffa, C.~Lovadina, A.~Reali, and
  G.~Sangalli.
\newblock A fully "locking-free" isogeometric approach for plane linear
  elasticity problems: a stream function formulation.
\newblock {\em Comput. Methods Appl. Mech. Engrg.}, 197(1):160--172, 2007.

\bibitem{BaSm93}
R.~E. Bank and R.~K. Smith.
\newblock A posteriori error estimates based on hierarchical bases.
\newblock {\em SIAM J. Numer. Anal.}, 30(4):921--935, 1993.

\bibitem{BenvenutiPhD}
A.~Benvenuti.
\newblock {\em Isogeometric Analysis for ${C}^1$-continuous Mortar Method}.
\newblock PhD thesis, Corso di Dottorato in Matematica e Statistica,
  Universit\`a degli Studi di Pavia, 2017.

\bibitem{BeMa14}
M.~Bercovier and T.~Matskewich.
\newblock {\em Smooth {B}\'{e}zier Surfaces over Unstructured Quadrilateral
  Meshes}.
\newblock Lecture Notes of the Unione Matematica Italiana, Springer, 2017.

\bibitem{bdd04}
P.~Binev, W.~Dahmen, and R.~DeVore.
\newblock Adaptive finite element methods with convergence rates.
\newblock {\em Numer. Math.}, 97(2):219--268, 2004.

\bibitem{BlMoVi17}
A.~Blidia, B.~Mourrain, and N.~Villamizar.
\newblock ${G}^1$-smooth splines on quad meshes with 4-split macro-patch
  elements.
\newblock {\em Comput. Aided Geom. Des.}, 52–-53:106 -- 125, 2017.

\bibitem{BlMoXu20}
A.~Blidia, B.~Mourrain, and G.~Xu.
\newblock Geometrically smooth spline bases for data fitting and simulation.
\newblock {\em Comput. Aided Geom. Des.}, 78:101814, 2020.

\bibitem{BrBuGiVa19}
C.~Bracco, A.~Buffa, C.~Giannelli, and R.~V\'{a}zquez.
\newblock Adaptive isogeometric methods with hierarchical splines: an overview.
\newblock {\em Discret. Contin. Dyn. S.}, 39(1):241--261, 2019.

\bibitem{BrGiKaVa20}
C.~Bracco, C.~Giannelli, M.~Kapl, and R.~V\'{a}zquez.
\newblock Isogeometric analysis with {$C^1$} hierarchical functions on planar
  two-patch geometries.
\newblock {\em Comput. Math. Appl.}, 80(11):2538--2562, 2020.

\bibitem{BrGiVa18}
C.~Bracco, C.~Giannelli, and R~V\'azquez.
\newblock {Refinement algorithms for adaptive isogeometric methods with
  hierarchical splines}.
\newblock {\em Axioms}, 7(3):43, 2018.

\bibitem{buffa2016c}
A.~Buffa and C.~Giannelli.
\newblock {Adaptive isogeometric methods with hierarchical splines: Error
  estimator and convergence}.
\newblock {\em Math. Models Methods Appl. Sci.}, 26:1--25, 2016.

\bibitem{buffa2017b}
A.~Buffa and C.~Giannelli.
\newblock {Adaptive isogeometric methods with hierarchical splines: Optimality
  and convergence rates}.
\newblock {\em Math. Models Methods Appl. Sci.}, 27:2781--2802, 2017.

\bibitem{bgmp16}
A.~Buffa, C.~Giannelli, P.~Morgenstern, and D.~Peterseim.
\newblock Complexity of hierarchical refinement for a class of admissible mesh
  configurations.
\newblock {\em Comput. Aided Geom. Design}, 47:83--92, 2016.

\bibitem{reviewadaptiveiga}
Annalisa Buffa, Gregor Gantner, Carlotta Giannelli, Dirk Praetorius, and Rafael
  V\'{a}zquez.
\newblock Mathematical {F}oundations of {A}daptive {I}sogeometric {A}nalysis.
\newblock {\em Arch. Comput. Methods Eng.}, 29(7):4479--4555, 2022.

\bibitem{CaWeToLiHuKiZh20}
H.~Casquero, X.~Wei, D.~Toshniwal, A.~Li, T.~J.~R. Hughes, J.~Kiendl, and Y.~J.
  Zhang.
\newblock Seamless integration of design and {K}irchhoff-{L}ove shell analysis
  using analysis-suitable unstructured {T}-splines.
\newblock {\em Comput. Methods Appl. Mech. Engrg.}, 360:112765, 2020.

\bibitem{ChAnRa18}
C.L. Chan, C.~Anitescu, and T.~Rabczuk.
\newblock Isogeometric analysis with strong multipatch ${C}^1$-coupling.
\newblock {\em Comput. Aided Geom. Des.}, 62:294--310, 2018.

\bibitem{ChAnRa19}
C.L. Chan, C.~Anitescu, and T.~Rabczuk.
\newblock Strong multipatch ${C}^1$-coupling for isogeometric analysis on {2D}
  and {3D} domains.
\newblock {\em Comput. Methods Appl. Mech. Engrg.}, 357:112599, 2019.

\bibitem{CoSaTa16}
A.~Collin, G.~Sangalli, and T.~Takacs.
\newblock Analysis-suitable ${G}^1$ multi-patch parametrizations for ${C}^1$
  isogeometric spaces.
\newblock {\em Comput. Aided Geom. Des.}, 47:93 -- 113, 2016.

\bibitem{CoAnVaBu20}
L.~Coradello, P.~Antolin, R.~V\'{a}zquez, and A.~Buffa.
\newblock Adaptive isogeometric analysis on two-dimensional trimmed domains
  based on a hierarchical approach.
\newblock {\em Comput. Methods Appl. Mech. Engrg.}, 364:112925, 2020.

\bibitem{FJKT23}
Andrea Farahat, Bert J\"{u}ttler, Mario Kapl, and Thomas Takacs.
\newblock Isogeometric analysis with {$C^1$}-smooth functions over multi-patch
  surfaces.
\newblock {\em Comput. Methods Appl. Mech. Engrg.}, 403(part A):Paper No.
  115706, 30, 2023.

\bibitem{gantner2017}
G.~Gantner, D.~Haberlik, and D.~Praetorius.
\newblock {Adaptive IGAFEM with optimal convergence rates: Hierarchical
  B-splines}.
\newblock {\em Math. Models Methods Appl. Sci.}, 27:2631--2674, 2017.

\bibitem{garau2018}
E.~Garau and R.~V\'{a}zquez.
\newblock Algorithms for the implementation of adaptive isogeometric methods
  using hierarchical {B}-splines.
\newblock {\em Appl. Numer. Math.}, 123:58--87, 2018.

\bibitem{giannelli2016}
C.~Giannelli, B.~J\"uttler, , Stefan~K. Kleiss, Angelos Mantzaflaris, Bernd
  Simeon, and Jaka \v{S}peh.
\newblock {THB-splines: An effective mathematical technology for adaptive
  refinement in geometric design and isogeometric analysis}.
\newblock {\em Comput. Methods Appl. Mech. Engrg.}, 299:337--365, 2016.

\bibitem{giannelli2012}
C.~Giannelli, B.~J\"uttler, and H.~Speleers.
\newblock {THB}--splines: the truncated basis for hierarchical splines.
\newblock {\em Comput. Aided Geom. Des.}, 29:485--498, 2012.

\bibitem{giannelli2014}
C.~Giannelli, B.~J\"uttler, and H.~Speleers.
\newblock {Strongly stable bases for adaptively refined multilevel spline
  spaces}.
\newblock {\em Adv. Comp. Math.}, 40:459--490, 2014.

\bibitem{gomez2008isogeometric}
H.~G{\'o}mez, V.~M Calo, Y.~Bazilevs, and T.~J.~R. Hughes.
\newblock Isogeometric analysis of the {Cahn--Hilliard} phase-field model.
\newblock {\em Comput. Methods Appl. Mech. Engrg.}, 197(49):4333--4352, 2008.

\bibitem{Grisvard}
P.~Grisvard.
\newblock {\em Singularities in boundary value problems}, volume~22 of {\em
  Recherches en Math\'{e}matiques Appliqu\'{e}es [Research in Applied
  Mathematics]}.
\newblock Masson, Paris; Springer-Verlag, Berlin, 1992.

\bibitem{Guo2015881}
Y.~Guo and M.~Ruess.
\newblock Nitsche's method for a coupling of isogeometric thin shells and
  blended shell structures.
\newblock {\em Comp. Methods Appl. Mech. Engrg.}, 284:881--905, 2015.

\bibitem{hennig2018}
P.~Hennig, M.~Ambati, L.~{De Lorenzis}, and M.~K{\"a}stner.
\newblock Projection and transfer operators in adaptive isogeometric analysis
  with hierarchical {B}-splines.
\newblock {\em Comput. Methods Appl. Mech. Engrg.}, 334:313 -- 336, 2018.

\bibitem{hennig2016}
P.~Hennig, S.~M{\"u}ller, and M.~K{\"a}stner.
\newblock {B\'ezier extraction and adaptive refinement of truncated
  hierarchical NURBS}.
\newblock {\em Comput. Methods Appl. Mech. Engrg.}, 305:316--339, 2016.

\bibitem{HuSaTaTo21}
T.~J.~R. Hughes, G.~Sangalli, T.~Takacs, and D.~Toshniwal.
\newblock Chapter 8 - {S}mooth multi-patch discretizations in {I}sogeometric
  {A}nalysis.
\newblock In {\em Geometric Partial Differential Equations - Part II},
  volume~22 of {\em Handbook of Numerical Analysis}, pages 467–--543.
  Elsevier, 2021.

\bibitem{KaBuBeJu16}
M.~Kapl, F.~Buchegger, M.~Bercovier, and B.~J\"uttler.
\newblock Isogeometric analysis with geometrically continuous functions on
  planar multi-patch geometries.
\newblock {\em Comput. Methods Appl. Mech. Engrg.}, 316:209 -- 234, 2017.

\bibitem{KaSaTa17a}
M.~Kapl, G.~Sangalli, and T.~Takacs.
\newblock Dimension and basis construction for analysis-suitable ${G}^{1}$
  two-patch parameterizations.
\newblock {\em Comput. Aided Geom. Des.}, 52--53:75 -- 89, 2017.

\bibitem{KaSaTa17b}
M.~Kapl, G.~Sangalli, and T.~Takacs.
\newblock Construction of analysis-suitable ${G}^1$ planar multi-patch
  parameterizations.
\newblock {\em Comput.-Aided Des.}, 97:41--55, 2018.

\bibitem{KaSaTa19b}
M.~Kapl, G.~Sangalli, and T.~Takacs.
\newblock Isogeometric analysis with ${C}^{1}$ functions on unstructured
  quadrilateral meshes.
\newblock {\em The SMAI Journal of Computational Mathematics}, 5:67--86, 2019.

\bibitem{KaSaTa19a}
M.~Kapl, G.~Sangalli, and T.~Takacs.
\newblock An isogeometric ${C}^1$ subspace on unstructured multi-patch planar
  domains.
\newblock {\em Comput. Aided Geom. Des.}, 69:55--75, 2019.

\bibitem{KaSaTa21}
M.~Kapl, G.~Sangalli, and T.~Takacs.
\newblock A family of ${C}^1$ quadrilateral finite elements.
\newblock {\em Adv. Comp. Math.}, 47(6):82, 2021.

\bibitem{KaViJu15}
M.~Kapl, V.~Vitrih, B.~J\"uttler, and K.~Birner.
\newblock Isogeometric analysis with geometrically continuous functions on
  two-patch geometries.
\newblock {\em Comput. Math. Appl.}, 70(7):1518 -- 1538, 2015.

\bibitem{Pe15-2}
K.~Kar{\v c}iauskas, T.~Nguyen, and J.~Peters.
\newblock {G}eneralizing bicubic splines for modeling and {IGA} with irregular
  layout.
\newblock {\em Comput.-Aided Des.}, 70:23--35, 2016.

\bibitem{KaPe17}
K.~Kar{\v c}iauskas and J.~Peters.
\newblock Refinable ${G}^1$ functions on ${G}^1$ free-form surfaces.
\newblock {\em Comput. Aided Geom. Des.}, 54:61--73, 2017.

\bibitem{KaPe18}
K.~Kar{\v c}iauskas and J.~Peters.
\newblock Refinable bi-quartics for design and analysis.
\newblock {\em Comput.-Aided Des.}, 102:204--214, 2018.

\bibitem{kiendl-bletzinger-linhard-09}
J.~Kiendl, K.-U. Bletzinger, J.~Linhard, and R.~W{\"u}chner.
\newblock Isogeometric shell analysis with {K}irchhoff-{L}ove elements.
\newblock {\em Comput. Methods Appl. Mech. Engrg.}, 198(49):3902--3914, 2009.

\bibitem{lorenzo2017}
G.~Lorenzo, M.~A. Scott, K.~Tew, T.~J.~R. Hughes, and H.~Gomez.
\newblock {Hierarchically refined and coarsened splines for moving interface
  problems, with particular application to phase-field models of prostate tumor
  growth}.
\newblock {\em Comput. Methods Appl. Mech. Engrg.}, 319:515--548, 2017.

\bibitem{mourrain2015geometrically}
B.~Mourrain, R.~Vidunas, and N.~Villamizar.
\newblock Dimension and bases for geometrically continuous splines on surfaces
  of arbitrary topology.
\newblock {\em Comput. Aided Geom. Des.}, 45:108 -- 133, 2016.

\bibitem{Peters2}
T.~Nguyen, K.~Kar{\v c}iauskas, and J.~Peters.
\newblock A comparative study of several classical, discrete differential and
  isogeometric methods for solving {P}oisson's equation on the disk.
\newblock {\em Axioms}, 3(2):280--299, 2014.

\bibitem{NgKaPe15}
T.~Nguyen, K.~Kar{\v c}iauskas, and J.~Peters.
\newblock ${C}^{1}$ finite elements on non-tensor-product 2d and 3d manifolds.
\newblock {\em Appl. Math. Comput.}, 272:148 -- 158, 2016.

\bibitem{NgPe16}
T.~Nguyen and J.~Peters.
\newblock Refinable ${C}^{1}$ spline elements for irregular quad layout.
\newblock {\em Comput. Aided Geom. Des.}, 43:123 -- 130, 2016.

\bibitem{RiAuFe16}
A.~Riffnaller-Schiefer, U.~H. Augsd\"orfer, and D.W. Fellner.
\newblock Isogeometric shell analysis with {NURBS} compatible subdivision
  surfaces.
\newblock {\em Appl. Math. Comput.}, 272:139--147, 2016.

\bibitem{SaJu21}
A.~Sailer and B.~J\"uttler.
\newblock Approximately ${C}^1$-smooth isogeometric functions on two-patch
  domains.
\newblock In {\em Isogeometric Analysis and Applications 2018}, pages 157--175.
  Springer, LNCSE, 2021.

\bibitem{ScSiEvLiBoHuSe13}
M.~A. Scott, R.~N. Simpson, J.~A. Evans, S.~Lipton, S.~P.~A. Bordas, T.~J.~R.
  Hughes, and T.~W. Sederberg.
\newblock Isogeometric boundary element analysis using unstructured
  {T}-splines.
\newblock {\em Comput. Methods Appl. Mech. Engrg.}, 254:197--221, 2013.

\bibitem{stevenson07}
R.~Stevenson.
\newblock Optimality of a standard adaptive finite element method.
\newblock {\em Found. Comput. Math.}, 7(2):245--269, 2007.

\bibitem{TaTo22}
Thomas Takacs and Deepesh Toshniwal.
\newblock Almost-{$C^1$} splines: {B}iquadratic splines on unstructured
  quadrilateral meshes and their application to fourth order problems.
\newblock {\em Comput. Methods Appl. Mech. Engrg.}, 403(part A):Paper No.
  115640, 2023.

\bibitem{ToSpHu17}
D.~Toshniwal, H.~Speleers, and T.~J.~R. Hughes.
\newblock Smooth cubic spline spaces on unstructured quadrilateral meshes with
  particular emphasis on extraordinary points: Geometric design and
  isogeometric analysis considerations.
\newblock {\em Comput. Methods Appl. Mech. Engrg.}, 327:411--458, 2017.

\bibitem{vazquez2016}
R.~V\'azquez.
\newblock {A new design for the implementation of isogeometric analysis in
  Octave and Matlab: GeoPDEs 3.0}.
\newblock {\em Comput. Math. Appl.}, 72:523--554, 2016.

\bibitem{WeZhHuSc15}
X.~Wei, Y.~Zhang, T.~J.~R. Hughes, and M.~A. Scott.
\newblock Truncated hierarchical {C}atmull-{C}lark subdivision with local
  refinement.
\newblock {\em Comput. Methods Appl. Mech. Engrg.}, 291:1--20, 2015.

\bibitem{WeZhLiHu17}
X.~Wei, Y.~Zhang, L.~Liu, and T.~J.~R. Hughes.
\newblock Truncated {T}-splines: fundamentals and methods.
\newblock {\em Comput. Methods Appl. Mech. Engrg.}, 316:349--372, 2017.

\bibitem{wei2021thusplines}
Xiaodong Wei.
\newblock {THU}-splines: Highly localized refinement on smooth unstructured
  splines.
\newblock In Carla Manni and Hendrik Speleers, editors, {\em Geometric
  Challenges in Isogeometric Analysis}, pages 305--332, Cham, 2022. Springer
  International Publishing.

\bibitem{WeTa21}
P.~Weinm\"uller and T.~Takacs.
\newblock Construction of approximate ${C}^1$ bases for isogeometric analysis
  on two-patch domains.
\newblock {\em Comput. Methods Appl. Mech. Engrg.}, 385:114017, 2021.

\bibitem{WeTa22}
Pascal Weinm\"{u}ller and Thomas Takacs.
\newblock An approximate {$C^1$} multi-patch space for isogeometric analysis
  with a comparison to {N}itsche's method.
\newblock {\em Comput. Methods Appl. Mech. Engrg.}, 401(part B):Paper No.
  115592, 2022.

\bibitem{ZhSaCi18}
Q.~Zhang, M.~Sabin, and F.~Cirak.
\newblock Subdivision surfaces with isogeometric analysis adapted refinement
  weights.
\newblock {\em Comput.-Aided Des.}, 102:104--114, 2018.

\end{thebibliography}

\end{document}